\newcommand{\M}{\text{M}}
\newcommand{\m}{\text{m}}
\newcommand{\h}{\text{h}}
\newcommand{\J}{\text{J}}
\newcommand{\s}{\text{s}}
\newcommand{\C}{\text{C}}
\newcommand{\NH}{\text{NHIM}}
\newtheorem{theorem}{Theorem}
\newtheorem{proposition}[theorem]{Proposition}
\newtheorem{lemma}[theorem]{Lemma}
\theoremstyle{definition}
\newtheorem{remark}[theorem]{Remark}
\newtheorem{definition}[theorem]{Definition}
\begin{document}
\title{Arnold diffusion for a complete family of perturbations\footnote{This work has been partially supported by
      the Spanish MINECO-FEDER grant MTM2015-65715 and
      the Catalan grant 2014SGR504. AD has been also partially supported by
      the Russian Scientific Foundation grant 14-41-00044
      at the Lobachevsky University of Nizhny Novgorod. RS has been also partially supported by CNPq, Conselho Nacional de Desenvolvimento Cient\'{i}fico e Tecnol\'{o}gico - Brasil.}}
\author{Amadeu Delshams\thanks{amadeu.delshams@upc.edu}}
\author{Rodrigo G. Schaefer\thanks{rodrigo.schaefer@upc.edu}}  
\affil{Department de Matem\`atiques\\
   Universitat Polit\`ecnica de Catalunya\\
    Av. Diagonal 647, 08028 Barcelona}  
    
\maketitle

\begin{abstract}     
In this work we illustrate the Arnold diffusion in a concrete example---the \emph{a priori} unstable Hamiltonian system  of $2+1/2$ degrees of freedom  $H(p,q,I,\varphi,s) = p^{2}/2+\cos q -1 +I^{2}/2 + h(q,\varphi,s;\varepsilon)$---proving that for \emph{any} small periodic perturbation of the form
 $h(q,\varphi,s;\varepsilon) = \varepsilon\cos q\left( a_{00} + a_{10}\cos\varphi + a_{01}\cos s  \right)$ 
 ($a_{10}a_{01} \neq 0$) there is global instability for the action.  
For the proof we apply a geometrical mechanism based in the so-called Scattering map. 

This work has the following structure:
In a first stage, for a more restricted case ($I^*\thicksim\pi/2\mu$, $\mu = a_{10}/a_{01}$), we use only one scattering map, with a special property: the existence of simple paths of diffusion called highways.
Later, in the general case we combine a scattering map with the inner map (inner dynamics) to prove the more general result (the existence of the instability for any $\mu$). 
The bifurcations of the scattering map are also studied as a function of $\mu$. 
Finally, we give an estimate for the time of diffusion, and we show that this time is primarily the time spent under the scattering map.\\

\noindent MSC2010 numbers: 37J40
\par\vspace{12pt}
\noindent\emph{Keywords}:
  Arnold diffusion,
  Normally hyperbolic invariant manifolds,
  Scattering maps
  
  
\end{abstract}   

\section{Introduction}

The main goal of this paper is to describe the geometrical mechanism that gives rise to global instability in \emph{a priori unstable} Hamiltonians with $2 + 1/2$ degrees of freedom.
To such end we will consider the Hamiltonian
\begin{equation}\label{eq:hamiltonian_int}
H_{\varepsilon}(p,q,I,\varphi,s)=\pm\left(\frac{p^2}{2} + V(q) \right) + \frac{I^2}{2} +\varepsilon h(p,q,I,\varphi,s),
\end{equation}
where $p$, $I \in \mathbb{R}$, $q,\,\varphi,\,s\in\mathbb{T}$, with a potential $V$ and a perturbation $h$ given by
\begin{equation}\label{eq:perturbation_int}
V(q) = \cos q  - 1, \quad\quad h(p,q,I,\varphi,s) =\cos q\left( a_{00} + a_{10}\cos\varphi + a_{01}\cos s \right).
\end{equation}

\emph{A priori unstable} Hamiltonian systems like the above one were introduced by \cite{Chierchia1994,lochak}
They consist on a rotor in the variables $(I,\varphi)$ as an integrable Hamiltonian in action-angle variables, a pendulum in the variables $(p,q)$ which carries out a separatrix associated  to a saddle point, plus a small perturbation of size $\varepsilon$.
For $\varepsilon = 0 $ Hamiltonian \eqref{eq:hamiltonian_int} is integrable and, in particular, the action $I$ is constant.
We want to describe the global instability in the variable $I$ for $\left|\varepsilon\right|$ non-zero but otherwise arbitrary small.

For simplicity, we refer to global instability in this paper simply as Arnold diffusion.
Nevertheless, it is worth remarking that \emph{originally} the term Arnold diffusion was coined for a \emph{priori stable} Hamiltonian systems, which are perturbations of integrable Hamiltonian systems written in action-angle variable.
For instance, replacing $V(q)$ by $\varepsilon V(q)$, our Hamiltonian \eqref{eq:hamiltonian_int} becomes a priori stable.
In that case, Arnold diffusion would consisting on finding trajectories with large deviations $(p(T),I(T))- (p(0),I(0))$.
This would be a much more difficult problem that the one considered here, because one has to confront to exponentially small splitting of invariant manifolds with respect to the parameter $\varepsilon$ as well as to the passage through double res onances in the action variables $p,I$.
In particular, exponential large estimates of the time of diffusion with respect to $\varepsilon$ due to Nekhoroshev \cite{Nekhoroshev77,LochakM05,BounemouraM11} would apply. 

The main characteristic of an \emph{a priori unstable} Hamiltonian system with 2+$1/2$ degrees of freedom is that there exists a 3D Normally Hyperbolic Invariant Manifold (NHIM) which is a large invariant object with $4$D unstable and stable invariant manifolds.

Inside this NHIM there exists an inner dynamics given by a Hamiltonian system with $1+1/2$ degrees of freedom.
This Hamiltonian possesses  $2$D invariant tori which prevent global instability inside the 3D NHIM.

For $\varepsilon = 0$ the stable and unstable invariant manifold coincide  along a huge separatrix filled with homoclinic orbits to the NHIM. 

For small $\left|\varepsilon\right| \neq 0$, the unstable and stable manifolds of the NHIM in general do not coincide, but otherwise intersect transversely along 3D homoclinic invariant manifolds.
Through each point on each 3D homoclinic manifold, there exists a homoclinic orbit which begins in a point of the NHIM and finishes on another point of the NHIM, not necessarily the same one. 
This assignment between an initial and the final point on the NHIM is called the \emph{Scattering map}.
In practice, one must select an adequate domain for any scattering map.

Under the action of a scattering map, the variable $I$ can increase (or decrease).
The geometric mechanism of global instability consists on looking for trajectories of the scattering map with a large change on the variable $I$.
Standard shadowing arguments provide the existence of nearby trajectories of Hamiltonian \eqref{eq:hamiltonian_int} with a large change on the variable $I$.

Our first result is that the global instability happens for any arbitrary perturbation \eqref{eq:perturbation_int}.
\begin{theorem}\label{teo:main_theorem_int}
Consider the Hamiltonian \eqref{eq:hamiltonian_int} with the potential $V(q)$ and perturbation $h(p,q,I,\varphi,s)$ given in \eqref{eq:perturbation_int}.
Assume that $$a_{10}\,a_{01}\,\neq\,0$$

Then, for any $I^{*}>0$, there exists $0<\varepsilon^{*}=\varepsilon^{*}(I^{*},a_{10},a_{01})\ll 1$  such that for any $\varepsilon$,
$0 < \left|\varepsilon\right| < \varepsilon^{*}$, there exists a trajectory
$(p(t),q(t),I(t),\varphi(t))$ such
that for some $T>0$
$$I(0)\leq -I^*<I^*\leq I(T).$$
\end{theorem}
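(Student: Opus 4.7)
The plan is to organise the proof along the standard geometric mechanism for a priori unstable Arnold diffusion: persistence of a normally hyperbolic invariant manifold, construction of a scattering map via Melnikov, and alternation of inner and outer dynamics to cross the inner KAM tori that obstruct a purely inner drift. At $\varepsilon=0$ the set $\tilde\Lambda_0=\{p=q=0\}$ is a 3D NHIM parametrised by $(I,\varphi,s)$, whose 4D stable and unstable manifolds coincide along the pendulum separatrix $(p_0(t),q_0(t))$. Classical NHIM theory yields a persistent nearby manifold $\tilde\Lambda_\varepsilon$ for $|\varepsilon|$ small, with invariant manifolds $\varepsilon$-close to the unperturbed ones. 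The splitting is governed at first order by the Poincar\'e--Melnikov potential
\[
  \mathcal{L}(I,\varphi,s)=-\int_{-\infty}^{+\infty}\bigl[h\bigl(0,q_0(\tau),I,\varphi+I\tau,s+\tau\bigr)-h\bigl(0,0,I,\varphi+I\tau,s+\tau\bigr)\bigr]\,d\tau,
\]
which for the concrete perturbation \eqref{eq:perturbation_int} is an explicit trigonometric polynomial whose amplitudes are products of $a_{10}$ or $a_{01}$ with Fourier integrals of $\cos q_0(\tau)$. The hypothesis $a_{10}a_{01}\neq 0$ is exactly what makes both non-trivial harmonics genuinely present, so that $\mathcal{L}$ admits isolated non-degenerate critical points defining transverse homoclinic channels of $W^{u,s}(\tilde\Lambda_\varepsilon)$.

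From these channels I would construct scattering maps $\mathcal{S}_\varepsilon:\tilde\Lambda_\varepsilon\to\tilde\Lambda_\varepsilon$ in the sense used throughout the paper; their first-order expansions are essentially gradients of the reduced Melnikov potential, so that iterating $\mathcal{S}_\varepsilon$ moves points in $\tilde\Lambda_\varepsilon$ along its level curves and changes $I$ monotonically along any chosen branch. The restricted statement announced in the abstract corresponds to values of $I$ close to $\pi/(2\mu)$, $\mu=a_{10}/a_{01}$, where the level curves of $\mathcal{L}$ contain a distinguished monotone branch---the \emph{highway}---on which a single scattering map already produces an order-one change in $I$. Once the highway exists, a direct shadowing of pseudo-orbits $\{\mathcal{S}_\varepsilon^n(x_0)\}$ yields the desired trajectory with $I(T)\ge I^*$ and $I(0)\le -I^*$ for that range of $I^*$.

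For general $I^*$ the highway is not available throughout $[-I^*,I^*]$, and the main obstruction appears: the inner Hamiltonian on $\tilde\Lambda_\varepsilon$ has primary KAM tori which cannot be crossed by $\mathcal{S}_\varepsilon$ alone. The strategy is then to build a transition chain alternating long inner segments along KAM tori with short outer jumps by $\mathcal{S}_\varepsilon$ onto a torus of slightly larger $I$; the chain closes provided each image $\mathcal{S}_\varepsilon(\mathcal{T})$ transversely intersects neighbouring tori of the inner flow. Verifying this crossing condition uniformly on the compact interval $[-I^*,I^*]$ is the heart of the matter and, in my view, the main obstacle: one has to study the bifurcations of $\mathcal{S}_\varepsilon$ as a function of $I$ and of $\mu$, and ensure that between every pair of adjacent primary tori of the inner map some branch of the scattering map realises the required transverse crossing, with $a_{10}a_{01}\neq 0$ entering in an essential non-degenerate way. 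Once such a transition chain is in place, the standard shadowing lemma for NHIMs with transverse homoclinic intersections produces a true trajectory $(p(t),q(t),I(t),\varphi(t))$ of $H_\varepsilon$ with $I(0)\le -I^*$ and $I(T)\ge I^*$, completing the proof.
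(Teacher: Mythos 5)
Your outline is right up to the point where the highways stop being available, but the way you explain the obstruction and how to overcome it is at odds with what is actually going on in this model.

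You assert that for general $I^*$ ``the main obstruction appears: the inner Hamiltonian on $\tilde\Lambda_\varepsilon$ has primary KAM tori which cannot be crossed by $\mathcal{S}_\varepsilon$ alone,'' and you then propose a transition chain alternating along KAM tori with outer jumps, with the ``heart of the matter'' being a transverse crossing condition between $\mathcal{S}_\varepsilon(\mathcal{T})$ and adjacent inner tori. This is the standard big-gap picture for generic perturbations, but for the present perturbation \eqref{eq:perturbation_int} it simply does not apply, and the paper says so explicitly: since $\tilde\Lambda_\varepsilon=\tilde\Lambda$ and the restricted Hamiltonian $K(I,\varphi,s;\varepsilon)=I^2/2+\varepsilon(a_{00}+a_{10}\cos\varphi+a_{01}\cos s)$ has the first integral $F(I,\varphi)=I^2/2+\varepsilon a_{10}(\cos\varphi-1)$, the inner dynamics is \emph{integrable}, with a continuous foliation by invariant tori and no gaps. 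There is no KAM machinery to set up, nothing to shadow across a gap, and no crossing condition with resonant tori to verify. So the whole mechanism you place at the heart of the general case is answering a question this model does not pose.

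What actually makes the general case non-trivial is geometric and lives entirely in the scattering map: for $|\mu|\geq 0.625$ the crests $C(I)$ develop tangencies with the NHIM lines $R_\theta(I)$, the reduced Poincar\'e function $\mathcal{L}^*_{\text{M}}$ is multivalued (several branches $\theta_A,\theta_B,\theta_C$ and several scattering maps), and for $|\mu|>0.97$ the scattering map even has genuine holes in its domain. In particular, the highways $\mathcal{L}^*=A_{00}+A_{01}$ only persist for $|I|<I_+$ or $|I|>I_{++}$. The paper bridges the interval $(I_+,I_{++})$ not by crossing inner tori, but by following a well-chosen branch of the multiple scattering map on the half of $\mathbb{T}$ where $\partial\mathcal{L}^*/\partial\theta>0$ and, whenever the pseudo-orbit drifts to the end of that $\theta$-window, applying the inner flow to reposition $\theta$ so that another stretch of scattering iterates can be performed; once $I$ passes $I_{++}$ the highway is picked up again. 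So the inner map plays a purely auxiliary role of moving $\theta$ along the foliation $F=\text{const}$, and the genuine work is the analysis of the level curves of $\mathcal{L}^*$ and of the bifurcations of the crests (Proposition~\ref{pro:geometrical_proposition} and Theorem~\ref{new_theorem}), not a transversality condition against KAM tori. Until your argument replaces the KAM/transition-chain step by an explicit description of which branch of the scattering map to iterate on each $I$-window and how the inner integrable flow resets $\theta$, it has a genuine gap precisely where the paper does its main work.

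Two minor points: the reduced case is not ``$I^*$ close to $\pi/(2\mu)$'' but $I^*$ \emph{bounded by} $I_+$, which behaves like $\pi/(2|\mu|\sinh(\pi/2))$ only in the limit $|\mu|\to\infty$; and your Melnikov potential carries a spurious minus sign relative to the one used here.
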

\begin{remark}
An upper bound for $\varepsilon^*$ can be estimated.
For instance, for $a_{10} = 0.6,\,a_{01}= 1$, given $I^* = 4$, it turns out $\varepsilon^* = 0.05$ is enough to guarantee global instability from $-4$ to $4$, see more details in sub-subsection \ref{gen_diff}.  
Alternatively, given $\varepsilon^*$ one can obtain a lower bound for the deviation $I(T) - I(0)$.
An expression for $T = T(\varepsilon^*,I^*,a_{10}, a_{01})$ is given in Theorem \ref{prop:time}. 
\end{remark} 

Let us mention that results about global instability are not new.
Indeed one can find related results in \cite{Seara2006,Delshams2009,lochak}.
Nevertheless, the main purpose of this paper is to describe the paths of instability that can be chosen as well as to estimate the time of diffusion.
In this sense, the choice of the simple model (1) will allow us:
\begin{enumerate}
\item To describe the map of heteroclinic orbits (scattering map) and to design fast paths of instability.

\item To describe bifurcations of the scattering maps as long as the parameter $\mu = a_{10}/a_{01}$ varies.

\item To estimate the time of diffusion along selected paths of instability. 
\end{enumerate}

To describe the scattering map let us recall how it can be computed.
To detect the intersections of the invariant manifolds associated to the NHIM one looks for non-degenerate critical points of the map
\begin{equation}\label{eq:critical_points_int}
\tau\rightarrow \mathcal{L}(I,\varphi - I\tau, s-\tau),
\end{equation}
where $\mathcal{L}(I,\varphi,s)$ is the so-called \emph{Melnikov potential}, which turns out to be for Hamiltonian \eqref{eq:hamiltonian_int} + \eqref{eq:perturbation_int}
\begin{equation*}
\mathcal{L}(I,\varphi,s)=A_{00}+A_{10}(I)\cos \varphi +A_{01}\cos s,
\end{equation*}
where 
\begin{equation*}
A_{00}=4\,a_{00}, \quad\quad  A_{10}(I)=\dfrac{2\,\pi\,I\,a_{10}}{\sinh(\frac{\pi I}{2})}, \quad\quad  A_{01}=\dfrac{2\,\pi\,a_{01}}{\sinh(\frac{\pi}{2})}.
\end{equation*}

Given $(I,\varphi,s)$ denote by $\tau^* = \tau^*(I,\varphi,s)$ one of the non-degenerate critical points of the function \eqref{eq:critical_points_int}, assuming that it exists.
Then the scattering map takes the form on the variables $(I,\theta = \varphi - Is)$:
\begin{equation}\label{eq:second_definition_SM_int}
\mathcal{S}(I,\theta)=\left(I+\varepsilon\,\frac{\partial \mathcal{L}^{*}}{\partial \theta}(I,\theta)+\mathcal{O}(\varepsilon^{2}),\theta-\varepsilon \, \frac{\partial \mathcal{L}^{*}}{\partial I}(I,\theta)+\mathcal{O}(\varepsilon^{2})\right),
\end{equation}
where  $\mathcal{L}^*(I,\theta) = \mathcal{L}(I,\theta -I\tau^*,-\tau^*)$ is the \emph{Reduced Poincar\'{e} function}.

Any different choice for a critical point $\tau^*$ gives rise to a different homoclinic manifold and to a different scattering map associated to it.
The location of the critical points $\tau^*(I,\varphi,s)$ of the function \eqref{eq:critical_points_int} in the torus $\{(\varphi,s)\in \mathbb{T}^2\}$ is therefore crucial for the definition and computation of the scattering map.

In Section \ref{sec:Inner and out} such critical points are determined by the value $\tau$ where the \NH\, lines $R_{\theta}(I)$ of equation 
\begin{equation*}
\varphi - Is = \theta
\end{equation*}
intersect the \emph{crests} $C(I)$ which  are given by the equation 
\begin{equation*}
\mu\alpha(I)\,\sin \varphi +\sin s =0,
\end{equation*}
where
\begin{equation*}
\alpha(I)=\frac{\sinh(\frac{\pi}{2})\,I^{2}}{\sinh(\frac{\pi\,I}{2})} \quad\quad \text{and}\quad\quad \mu=\dfrac{a_{10}}{a_{01}}. 
\end{equation*}

Subsection \ref{sub:Meln pot and crests} is devoted to describe the ``primary'' intersections between the \NH\, lines $R_{\theta}(I)$ and the crests $C(I)$ for all values of $\mu \neq 0,\infty$.
It turns out that there appear three different scenarios for the existence of scattering maps as a function of the bifurcation parameter $\mu$, as described in Theorem \ref{new_theorem}:
\begin{itemize}
\item For $0<\left|\mu\right|<0.625$, there exist two primary scattering maps defined on the whole range of $\theta\in \mathbb{T}$.

\item For $0.625<\left|\mu\right| <0.97$, there exist tangencies between the \NH\, lines and the crests giving rise to, at least, six scattering maps.

\item For $\left|\mu\right|>0.97$, for some bounded interval of $\left|I\right|$ there exists a sub-interval of $\theta$ in $\mathbb{T}$ such that the scattering maps are not defined.
\end{itemize}

By formula \eqref{eq:second_definition_SM_int} the trajectories of the scattering map are given by the -$\varepsilon$-time flow of the Hamiltonian $\mathcal{L}^*(I,\theta)$, up to order $\mathcal{O}(\varepsilon^2)$.
Therefore the phase space of the trajectories of the scattering map are well approximated by the level curves of the Reduced Poincar\'{e} function  $\mathcal{L}^*$, as long as the number of iterates is smaller than $1/\varepsilon$.

In Section \ref{sec:Arnold_diffusion} we display and study the geometric properties of the level curves of $\mathcal{L}^*$ and  we notice that there are some distinguished level sets of $\mathcal{L}^*$, namely $\mathcal{L}^*(I,\theta) = A_{00} + A_{01}$, called \emph{highways}, where the action $I$ increases or decreases very rapidly along close to vertical lines in the phase space $(\theta,I)$ (see Fig. \ref{fig:highways_example}).
Such highways are always defined for $\left|I\right|$ small (indeed, they are born on the inflection points of $ \mathcal{L}^*(0,\cdot)$) or $\left|I\right|$ large.

More precisely, in Proposition \ref{pro:geometrical_proposition} we see that for $\left|\mu\right| <0.9$, the highways are well defined for any value of $I$, whereas for $\left|\mu\right| >0.9$ they break along two intervals of $I$ ($\left|I\right| \in\left[I_{+}, I_{++}\right]$).

We finish this paper with an estimate of the time of diffusion, which for simplicity is presented only along the highways. 
Such estimate takes the form
\begin{equation*}
T_{\text{d}} = \frac{T_{\text{s}}}{\varepsilon}\log\left(\frac{C}{\varepsilon}\right) + \mathcal{O}(\varepsilon)\quad (\varepsilon \rightarrow 0).
\end{equation*}

Indeed, in Theorem \ref{prop:time}, we see that for selected diffusion trajectories, the diffusion time is basically given by the number of iterates of scattering maps, that is, the time under the inner map is negligible.
We notice that the form of this estimate agrees with the ``optimal'' estimates given by \cite{Berti2003,Tre04}, however we can provide concrete estimates for the constants $T_{\s}$ and $C$ as a function of $I^*,\,a_{10},\,a_{01}$, see Theorem \ref{prop:time}.

We finish the introduction by noting that all the results obtained with a perturbation \eqref{eq:perturbation_int} can be stated \emph{mutatis mutandis} for the following trivial generalization
$$h(p,q,I,\varphi,s) = \cos q \left(a_{00} + a_{10}\cos(k \varphi + l s) + a_{01}\cos s\right),\,k\neq 0,$$
since the change $\varphi' = k\varphi + l s$ gives our model with perturbation like \eqref{eq:perturbation_int} (with integrable Hamiltonian system for the inner dynamics).

Our results also apply for a more general perturbation like
$$h(p,q,I,\varphi,s) = \cos q \left(a_{00} + a_{10}\cos(k \varphi + l s) + a_{01}\cos(k'\varphi +l's)\right),\,\text{with}\,\begin{vmatrix}
 k & s \\ 
 k' & s'
\end{vmatrix} \neq 0,$$ 
although the concrete paths of diffusion needed require an additional description which is out of the scope of this paper.

\section{The system}\label{sec:the_system}

We consider the following \emph{a priori unstable} Hamiltonian with $2+1/2$ degrees of freedom with $2\pi$-periodic time dependence:
\begin{equation}\label{eq:hamil_system}
H_{\varepsilon}(p,q,I,\varphi,s)=\pm\left(\frac{p^2}{2} + \cos q - 1 \right) + \frac{I^2}{2} +\varepsilon f(q)g(\varphi,s),
\end{equation}
where $p$, $I \in \mathbb{R}$, $q,\,\varphi,\,s\in\mathbb{T}$ and $\varepsilon$ is 
small enough.

In the unperturbed case, that is, $\varepsilon = 0$, the Hamiltonian $H_{0}$ represents the standard pendulum plus a rotor:
$$H_{0}(p,q,I,\varphi,s) = \frac{p^2}{2} + \cos q - 1 +\frac{I^2}{2},$$
with associated equations
\begin{eqnarray}
\dot{q} = \frac{\partial H_{0}}{\partial p} = p &&\dot{p} = -\frac{\partial H_{0}}{\partial q} =\sin q\label{eq:qp}\\[2 mm] 
\dot{\varphi}=\frac{\partial H_{0}}{\partial I} = I& & \dot{I}=-\frac{\partial H_{0}}{\partial \varphi} =0  \nonumber \\
\dot{s}=1 & &  \nonumber
\end{eqnarray}
and associated flow  
$$\phi_t(p,q,I,\varphi,s) = \left(p(t),q(t),I,\varphi + It,s + t\right).$$
In this case, $(0,0)$ is a saddle point on the plane formed by variables $(p,q)$ with associated unstable and stable invariant curves.
Introducing $P(p,q)=p^2/2 + \cos q - 1$, we have that $P^{-1}(0)$ divides the $(p,q)$ phase space, separating the behavior of orbits.
The branches of $P^{-1}(0)$ are called \emph{separatrices} and are parameterized by the homoclinic trajectories to the saddle point $(p,q) = (0,0)$,
\begin{equation}
(p_{0}(t),q_{0}(t)) = \left(\frac{2}{\cosh t},4\arctan e^{\pm t}\right).\label{eq:separatrices}
\end{equation}
\begin{figure}[h]
\centering
\includegraphics[scale=0.27]{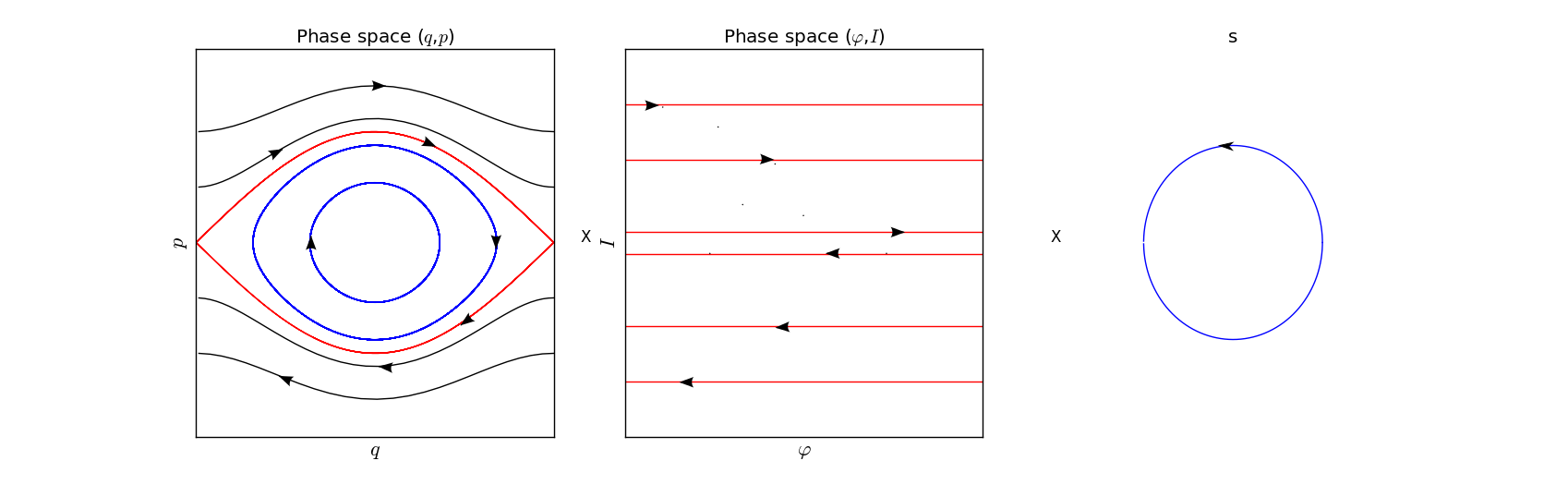}
\caption{ Phase Space  - Unperturbed problem}\label{fig:phase_space}
\end{figure}

For any initial condition $(0,0,I,\varphi,s)$, the unperturbed flow is $\phi_t(0,0,I,\varphi,s) = (0,0,I,\varphi + It, s + t)$, that is, the torus $\mathcal{T}^{0}_I = \{(0,0,I,\varphi,s);\, (\varphi,s)\in\mathbb{T}^2\}$ is an invariant set for the flow.
$\mathcal{T}^0_I$ is called \emph{whiskered torus}, and we call \emph{whiskers} its unstable and stable manifolds, which turn out to be coincident:
$$W^{0}\mathcal{T}^{0}_{I} = \{(p_{0}(\tau),q_{0}(\tau),I,\varphi,s);\tau\,\in\,\mathbb{R}, (\varphi,s)\,\in\,\mathbb{T}^{2})\}.$$

For any positive value $I^*$, consider the interval $\left[-I^*,I^*\right]$ and the cylinder formed by an uncountable family of tori
$$ \widetilde{\Lambda} = \{\mathcal{T}^0_{I}\}_{I\in\left[-I^*,I^*\right]} 
= \{(0,0,I,\varphi,s);I\in\left[-I^*,I^*\right],(\varphi,s)\in\mathbb{T}^2\}.$$
The set $\widetilde{\Lambda} $ is a 3D-normally hyperbolic invariant manifold (NHIM) with 4D-coincident stable and unstable invariant manifolds:
$$W^{0}\widetilde{\Lambda} = \left\{(p_{0}(\tau),q_{0}(\tau),I,\varphi,s);\tau\,\in\,\mathbb{R},\,I\,\in\,\left[-I^{*},I^{*}\right],\, (\varphi,s)\,\in\,\mathbb{T}^{2}\right\}.$$

We now come back to the perturbed case, that is, small $\left|\varepsilon\right|\neq 0$.
By the theory of NHIM (see for instance \cite{Seara2006} for more information), if $f(q)g(\varphi,s)$ is smooth enough, there exists a smooth NHIM $\widetilde{\Lambda}_{\varepsilon}$  close to $\widetilde{\Lambda}$ and the local invariant manifolds $W^{\text{u}}_{\text{loc}}(\widetilde{\Lambda}_ {\varepsilon})$ and $W_{\text{loc}}^{\text{s}}(\widetilde{\Lambda}_{\varepsilon})$ are $\varepsilon$-close to $W^{0}(\widetilde{\Lambda})$.
Indeed, 
$$W_{\text{loc}}^{\text{u,s}}(\widetilde{\Lambda}_{\varepsilon}) = \bigcup_{\tilde{x}\in\widetilde{\Lambda}_{\varepsilon}} W_{\text{loc}}^{\text{u,s}}(\tilde{x}),$$ 
where $W_{\text{loc}}^{\text{u,s}}(\tilde{x})$ are the unstable and stable manifolds associated to a point $\tilde{x}\in\widetilde{\Lambda}_{\varepsilon}$ (more precise information about the differentiability of $\widetilde{\Lambda}_{\varepsilon}$ and $W^{\text{u,s}}(\widetilde{\Lambda}_{\varepsilon})$ can be found in \cite{Seara2006}). 
Notice that if $f'(0) = 0$, $\tilde{\Lambda}_{\varepsilon} = \tilde{\Lambda}$, that is, $\tilde{\Lambda}$ is a NHIM for all $\varepsilon$. 
But even in this case, in general $W^{u}(\tilde{\Lambda}_ {\varepsilon})$ and $W^{s}(\tilde{\Lambda}_{\varepsilon})$ do not need to coincide, that is, the  \emph{separatrices} split.

Along this paper, we are going to take
\begin{equation}
f(q) = \cos q\quad\quad \text{and}\quad\quad g(\varphi,s) = a_{00} + a_{10}\cos\varphi + a_{01}\cos s ,\quad(a_{10}a_{01}\neq 0)\label{fun:f_and_g}
\end{equation}
so that there exists a normally hyperbolic invariant manifold $\tilde{\Lambda}_{\varepsilon} = \tilde{\Lambda}$ in the dynamics associated to the Hamiltonian \eqref{eq:hamiltonian_int}$+$\eqref{eq:perturbation_int}
\begin{equation*}
H_{\varepsilon}(p,q,I,\varphi,s)=\pm\left(\frac{p^2}{2} + \cos q - 1 \right) + \frac{I^2}{2} +\varepsilon \cos q\left( a_{00} + a_{10}\cos\varphi + a_{01}\cos s \right).
\end{equation*}

\begin{remark}
We are  choosing $f(q)$ as in \cite{Delshams2011} and a similar $g(\varphi,s)$.
Indeed, in \cite{Delshams2011}, $g(\varphi,s)=\sum_{(k,l) \in \mathbb{N}^{2}} a_{k,l}\cos(k\varphi - ls-\sigma_{k,l})$ is a \emph{full} trigonometrical series with the condition
\begin{equation*}\label{eq:aklb1}
{\hat\alpha} {\rho}^{\beta k} {r}^{\beta l} \leq |a_{k,l}| \leq \alpha
\rho^k r^l,
\end{equation*}
for $0<\rho \leq \rho^{*}$ and $0<r \leq r^{*}$, where $\rho^{*}(\lambda,\alpha, \hat \alpha, \beta)$ and $r^{*}(\lambda,\alpha, \hat \alpha, \beta)$ are small enough.
Under these hypothesis, the Melnikov potential, after ignoring terms of order greater or equal than 2, is the same Melnikov potential that we will obtain in the subsection \ref{sec:Melnikov_potential}. 
However, the inner dynamics in \cite{Delshams2011} is different.
In our case, as we will see, it is integrable,
therefore it is trivial and we will not worry about KAM theory to study the perturbed dynamics inside $\widetilde{\Lambda}_{\varepsilon}$.
\end{remark}

\section{The inner and the outer dynamics\label{sec:Inner and out}}

We have two dynamics associated to $\tilde{\Lambda}_{\varepsilon}$, the inner and the outer dynamics.
For the study of the inner dynamics we use the \emph{inner map} and for the outer one we use the \emph{scattering map}.
When it be convenient we will combine the scattering map and the inner dynamics to show the diffusion phenomenon.

\subsection{Inner map}

The inner dynamics is the dynamics in the NHIM.
Since $\tilde{\Lambda}_{\varepsilon} = \tilde{\Lambda}$, the Hamiltonian $H_{\varepsilon}$ restricted to $\tilde{\Lambda}_{\varepsilon}$ is
\begin{equation}
K(I,\varphi,s;\varepsilon)=\frac{I^{2}}{2}+\varepsilon\,\left(a_{00}+a_{10}\cos \varphi +a_{01}\cos s\right),\label{eq:inner_Hamil}
\end{equation}
with associated Hamiltonian equations
\begin{eqnarray}\label{eq.inner_equations}
\dot{\varphi} = I \quad &\quad \dot{I}=\varepsilon\,a_{10}\sin \varphi  \quad &\quad \dot{s}= 1.
\end{eqnarray} 

Note that the first two equations just depend of the variables $I$ and $\varphi$, thus using that 
$$F(I,\varphi):=\frac{I^{2}}{2}+\varepsilon\,a_{10}\left(\cos\varphi - 1\right) = K(I,\varphi,s) - \varepsilon\left(a_{00} +a_{10}\cos s -1\right)$$ 
is a first integral and indeed a Hamiltonian function for equations (\ref{eq.inner_equations}), one has that the inner Hamiltonian system (\ref{eq:inner_Hamil}) is integrable.
Therefore, here does not appear a genuine ``big gap problem'', and it does not require the KAM theorem to find invariant tori, since there is a continuous foliation of invariant tori simply given by $F=$ constant.
When $\varepsilon$ is small enough we have that the solutions are close to $I=$ constant, that is the level curves of $F$ are almost `flat' or `horizontal' in the action $I$ (see Fig. \ref{fig:inner_map}).

\begin{figure}[h]
\centering
\includegraphics[scale=0.27]{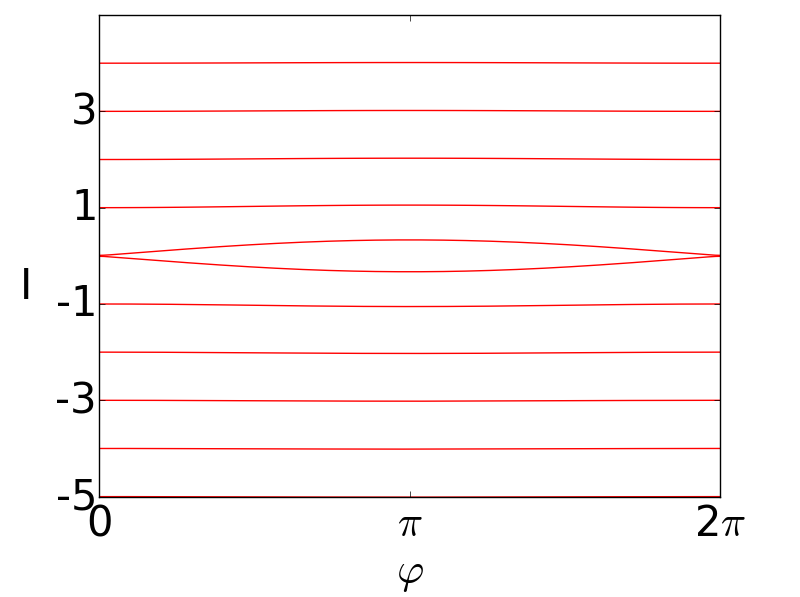}
\caption{Inner dynamics in the variables $(\varphi,I)$ for $a_{10} = 0.6$ and $\varepsilon = 0.01$ }\label{fig:inner_map}
\end{figure}
\subsection{Scattering map: Melnikov potential and crests\label{sub:Meln pot and crests}} 

The scattering map was introduced in \cite{Delshams2000} and is our main object of study.
Let $\tilde{\Lambda}$ be a NHIM with invariant manifolds intersecting transversally along a homoclinic manilfold $\Gamma$.
A scattering map is a map $S$ defined by $S(\tilde{x}_{-})=\tilde{x}_{+}$ if there exists $\tilde{z}\,\in\,\Gamma$ satisfying 
\begin{equation*}
\left|\phi_{t}(\tilde{z})-\phi_{t}(\tilde{x}_{\pm})\right|\,\longrightarrow\,0 \text{ as } t\,\longrightarrow\, \pm\infty 
 \end{equation*}
that is, $W_{\varepsilon}^u(\tilde{x}_-)$ intersects (transversally) $W^s_{\varepsilon}(\tilde{x}_+)$ in $\tilde{z}$.

For a more computational and geometrical definition of scattering map, we have to study the intersections between the hyperbolic invariant manifolds of $\tilde{\Lambda}_{\varepsilon}$.
We will use the Poincar\'{e}-Melnikov theory.

\subsubsection{Melnikov potential \label{sec:Melnikov_potential}}

We have the following proposition \cite{Delshams2011,Seara2006}.
\begin{proposition}\label{prop:melnpot}
Given $(I,\varphi,s)\,\in\,\left[-I^{*},I^{*}\right]\,\times\,\mathbb{T}^{2}$, assume that the real function 
\begin{equation}\label{eq: SM_critical_point}
\tau\,\in\,\mathbb{R}\,\longmapsto\,\mathcal{L}(I,\varphi-I\,\tau,s-\tau)\,\in\,\mathbb{R}
\end{equation}
has a non degenerate critical point $\tau^{*}\, =\, \tau^*(I,\varphi,s)$, where 
\begin{equation*}
\mathcal{L}(I,\varphi,s)=\int_{-\infty}^{+\infty}\left(f(q_{0}(\sigma))g(\varphi+I\sigma,s+\sigma;0)-f(0)g(\varphi+I\sigma,s+\sigma;0)\right)d\sigma.
\end{equation*}
Then, for $0\,<\,\left|\varepsilon\right|$ small enough, there exists a unique transversal homoclinic point $\tilde{z}$ to $\tilde{\Lambda}_{\varepsilon}$, which is $\varepsilon$-close to the point
$\tilde{z}^{*}(I,\varphi,s)\,=\,(p_{0}(\tau^{*}),q_{0}(\tau^{*}),I,\varphi,s)\,\in\,W^{0}(\tilde{\Lambda})$:
\begin{equation}
\tilde{z}=\tilde{z}(I,\varphi,s)=(p_{0}(\tau^{*})+O(\varepsilon), q_{0}(\tau^{*})+O(\varepsilon),I,\varphi,s)\,\in\,W^{u}(\tilde{\Lambda_ {\varepsilon}})\,\pitchfork\,W^{s}(\tilde{\Lambda_{\varepsilon}}).\label{eq:heteroclinic_point}
\end{equation}
\end{proposition}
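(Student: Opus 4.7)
The plan is to follow the Poincar\'e--Melnikov method adapted to normally hyperbolic invariant manifolds, as in \cite{Delshams2000,Delshams2011,Seara2006}. Because $f'(0)=0$ for $f(q)=\cos q$, the persistence theorem for NHIMs yields $\tilde\Lambda_\varepsilon=\tilde\Lambda$, and the local stable and unstable manifolds $W^{\mathrm{s,u}}_{\mathrm{loc}}(\tilde\Lambda_\varepsilon)$ are $C^r$-close to $W^0(\tilde\Lambda)$ and may be written as graphs over the unperturbed separatrix. Fixing $(I,\varphi,s)\in[-I^*,I^*]\times\mathbb T^2$, I would parameterize a neighbourhood of the unperturbed homoclinic by $(\tau,I,\varphi,s)\mapsto\tilde z^*(\tau)=(p_0(\tau),q_0(\tau),I,\varphi,s)$; extending this by the perturbed flow produces coordinates in which $W^{\mathrm u}(\tilde\Lambda_\varepsilon)$ and $W^{\mathrm s}(\tilde\Lambda_\varepsilon)$ appear as $\varepsilon$-small graphs above $W^0(\tilde\Lambda)$.

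The next step is to measure the splitting between the two perturbed manifolds by a scalar Melnikov function, using the general machinery of \cite{Delshams2000}: the splitting 1-form on the fundamental domain is exact, and its primitive admits the expansion $\mathcal L_\varepsilon(I,\varphi,s)=\varepsilon\mathcal L(I,\varphi,s)+\mathcal O(\varepsilon^2)$. The first-order term is obtained by differentiating the pendulum energy $P(p,q)=p^2/2+\cos q-1$ along the perturbed trajectories on $W^{\mathrm{u,s}}$, integrating from $\mp\infty$, and subtracting; this yields
\[
\mathcal L(I,\varphi,s)=\int_{-\infty}^{+\infty}\bigl[f(q_0(\sigma))-f(0)\bigr]\,g(\varphi+I\sigma,s+\sigma)\,d\sigma,
\]
in agreement with the statement. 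Subtracting $f(0)$ is precisely what guarantees absolute convergence, since $q_0(\sigma)\to q_0(\pm\infty)$ exponentially while $g$ remains bounded.

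Third, a zero of the splitting 1-form at a given base point corresponds to a homoclinic intersection, and a non-degenerate zero to a transverse one. Since the unperturbed inner flow on $\tilde\Lambda$ sends $(\varphi,s)$ to $(\varphi+It,s+t)$, moving the base point by time $-\tau$ amounts to replacing $(\varphi,s)$ by $(\varphi-I\tau,s-\tau)$, so the $\tau$-component of the splitting equals $\varepsilon\,\frac{d}{d\tau}\mathcal L(I,\varphi-I\tau,s-\tau)+\mathcal O(\varepsilon^2)$. Applying the implicit function theorem at the non-degenerate critical point $\tau^*$ of this function then gives, for $|\varepsilon|$ small, a unique nearby zero $\tau(\varepsilon)=\tau^*+\mathcal O(\varepsilon)$, and hence the transverse homoclinic point $\tilde z$ of \eqref{eq:heteroclinic_point}.

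The main technical obstacle is the uniform control of the $\mathcal O(\varepsilon^2)$ remainders over the compact set $[-I^*,I^*]\times\mathbb T^2$ together with the verification that the implicit function theorem applies with constants independent of $(I,\varphi,s)$. This rests on the exponential decay of $p_0$ and of $f(q_0)-f(0)$ along the unperturbed separatrix, combined with smoothness and boundedness of $g$; the required estimates are carried out in full generality in \cite{Delshams2011,Seara2006}, so the present proposition follows as a direct specialization of those results to the perturbation \eqref{fun:f_and_g}.
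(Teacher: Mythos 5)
Your proposal is correct and matches the paper's approach: the paper states Proposition~\ref{prop:melnpot} without proof, citing \cite{Delshams2011,Seara2006}, and your sketch accurately reconstructs the Poincar\'e--Melnikov argument for NHIMs that those references carry out (graph parameterization of $W^{\mathrm{u,s}}(\tilde\Lambda_\varepsilon)$ over $W^0(\tilde\Lambda)$, the exact splitting 1-form with first-order term given by the Melnikov potential, the absolute convergence from subtracting $f(0)$, and the implicit function theorem at the non-degenerate critical point $\tau^*$), ultimately deferring to the same references for uniform remainder estimates. No discrepancy with the paper's treatment.
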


The function $\mathcal{L}$ is called the \emph{Melnikov potential} of Hamiltonian \eqref{eq:hamil_system}.
In our case, from \eqref{eq:perturbation_int},\eqref{eq:separatrices} and \eqref{fun:f_and_g}
\begin{equation}
\mathcal{L}(I,\varphi,s)=A_{00}+A_{10}(I)\cos \varphi +A_{01}\cos s,\label{eq:our_meln_potential}
\end{equation}
where $A_{00}=4\,a_{00}$,
\begin{equation}
A_{10}(I)=\dfrac{2\,\pi\,I\,a_{10}}{\sinh(\frac{\pi\,I}{2})} \quad \text{ and } \quad A_{01}=\dfrac{2\,\pi\,a_{01}}{\sinh(\frac{\pi}{2})}.\label{eq:def_coef_mel_pot}
\end{equation}
\begin{figure}[h]
\centering
\includegraphics[scale=0.45]{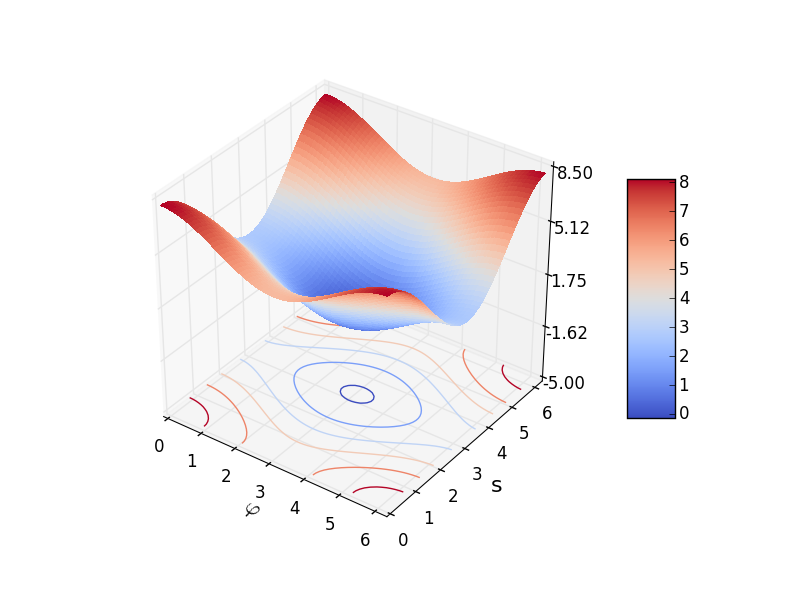}
\caption{The Melnikov potential, $\mu = a_{10}/a_{01} = 0.6 $ and $I = 1$.}
\end{figure}

We now look for the critical points of \eqref{eq: SM_critical_point} which indeed are the solutions of $\partial \mathcal{L}/\partial \tau(I,\varphi - I\tau,s-\tau) = 0$.  
Equivalently, $\tau^* = \tau^*(I,\varphi,s)$ satisfies 
\begin{equation}
I\,A_{10}(I)\sin(\varphi-I\,\tau^{*})+A_{10}\sin(s-\tau^{*})=0.\label{eq:def_of_tau}
\end{equation}

From a geometrical view-point,  for any $(I,\varphi,s)\in \left[-I^*,I^*\right]\times \mathbb{T}^2$ finding $\tau^* =\tau^*(I,\varphi,s)$ satisfying (\ref{eq:def_of_tau}) is equivalent to looking for the extrema of $\mathcal{L}$ on the \NH\, line 
\begin{equation}\label{eq:def_R} 
R(I,\varphi,s) =\{(I,\varphi-I\tau,s-\tau), \tau \in \mathbb{R}\},
\end{equation}
which correspond to the unperturbed trajectories of Hamiltonian $H_{0}$ along the unperturbed NHIM.

Thus we can define the \emph{scattering map} as in \cite{Delshams2011}.
Let $W$ be an open subset of $\left[-I^*,I^*\right]\times \mathbb{T}^2$ such that the map
$$(I,\varphi,s)\in W \mapsto \tau^*(I,\varphi,s),$$
where $\tau^*(I,\varphi,s)$ is a critical point of (\ref{eq: SM_critical_point}) or, equivalently, a solution of (\ref{eq:def_of_tau}), is well defined and $C^2$.
Therefore, there exists a unique $\tilde{z}$ satisfying (\ref{eq:heteroclinic_point}).
Let $\Gamma = \{\tilde{z}(I,\varphi,s;\varepsilon), (I,\varphi,s)\in W\}$.
For any $\tilde{z} \in \Gamma$ there exist unique $\tilde{x}_{+,-} = \tilde{x}_{+,-}(I,\varphi,s;\varepsilon)\in\tilde{\Lambda}_{\varepsilon}$ such that $\tilde{z} \in W^{s}_{\varepsilon}(\tilde{x}_-)\cap W^{u}_{\varepsilon}(\tilde{x}_+)$.
Let
$$H_{+,-} =\bigcup\{\tilde{x}_{+,-}(I,\varphi,s,\varepsilon),(I,\varphi,s)\in W\}.$$
We define the scattering map associated to $\Gamma$ as the map
\begin{eqnarray*}
S:H_-&\longrightarrow & H_+\\
\tilde{x}_-&\longmapsto & S(\tilde{x}_-)=\tilde{x}_+.
\end{eqnarray*} 

By the geometric properties of the scattering map (it is an exact symplectic map \cite{Delshams2008}) we have, see \cite{Delshams2009} and \cite{Delshams2011}, that the \emph{scattering map} has the explicit form 

\begin{equation}\label{eq:first_definition_SM}
S(I,\varphi,s) = \left(I+\varepsilon\,\frac{\partial L^{*}}{\partial \varphi}(I,\varphi,s)+\mathcal{O}(\varepsilon^{2}),\varphi-\varepsilon \, \frac{\partial L^*}{\partial I}(I,\varphi,s)+\mathcal{O}(\varepsilon^{2}),s\right),
\end{equation}
where
\begin{equation}
L^{*}(I,\varphi,s)=\mathcal{L}(I,\varphi-I\,\tau^{*}(I,\varphi,s),s-\tau^{*}(I,\varphi,s)).\label{eq:L^*}
\end{equation}

\paragraph{The new variable $\theta = \varphi - Is$\\\\}

Notice that if $\tau^*(I,\varphi,s)$ is a critical point of (\ref{eq: SM_critical_point}), $\tau^*(I,\varphi,s) -\sigma$ is a critical point of
\begin{equation}
\tau\longmapsto\mathcal{L}(I,\varphi- I(\tau + \sigma),s-(\tau + \sigma))=\mathcal{L}(I,\varphi - I\sigma - I\tau,s-\sigma-\tau).\label{eq:sigma}
\end{equation}

Since $\tau^*(I,\varphi-I\sigma,s-\sigma)$ is a critical point of the right hand side of (\ref{eq:sigma}), by the uniqueness in $W$ we can conclude that
\begin{equation}
\tau^*(I,\varphi - I\sigma,s-\sigma) = \tau^*(I,\varphi,s) - \sigma.\label{eq:prop_tau}
\end{equation}  
Thus, by \eqref{eq:L^*},
\begin{eqnarray*}
L^*(I,\varphi-I\sigma,s-\sigma) &= \mathcal{L}(I,\varphi-I\sigma -I(\tau^* - \sigma),s-\sigma-\tau^*)\\
&=\mathcal{L}(I,\varphi - I\tau^*,s-\tau^*)=L^*(I,\varphi,s),
\end{eqnarray*}
and, in particular for $\sigma = s$,
$$L^*(I,\varphi - Is,0) = L^*(I,\varphi,s).$$

Introducing the new variable
$$\theta = \varphi - Is,$$
we define \emph{the Reduced Poincar\'{e} function}
\begin{equation}
\mathcal{L}^*(I,\theta) := L^*(I,\varphi - Is,0) = L^*(I,\varphi,s).\label{eq:red_poi_func_1}
\end{equation}

We can write the scattering map on the variables $(I,\theta)$.
From $\left(I',\varphi',s'\right) = S(I,\varphi,s)$, we have that 
\begin{eqnarray*}
\theta' & =  \varphi'-I's'&= \left(\varphi - \varepsilon\frac{\partial L^{*}}{\partial I}(I,\varphi,s)\right) -\left(I + \varepsilon\frac{\partial L^{*}}{\partial\varphi}(I,\varphi,s)\right)s + \mathcal{O}(\varepsilon^2)\\
&&= \theta - \varepsilon\left(\frac{\partial L^{*}}{\partial I}(I,\varphi,s)+\frac{\partial L^{*}}{\partial\varphi}(I,\varphi,s)s \right) +\mathcal{O}(\varepsilon^2).
\end{eqnarray*}
Since 
\begin{equation*}
\frac{\partial L^*}{\partial I}(I,\varphi,s) =\frac{\partial \mathcal{L}^*}{\partial I}(I,\theta) - s\frac{\partial\mathcal{L}^*}{\partial \theta}(I,\theta) \quad\quad \text{ and }\quad\quad\frac{\partial L^*}{\partial \varphi}  = \frac{\partial \mathcal{L}^*}{\partial \theta}(I,\theta),
\end{equation*}
we conclude that
\begin{equation*}
\theta' = \theta - \varepsilon\left(\frac{\partial \mathcal{L}^*}{\partial I}(I,\theta)\right) + \mathcal{O}(\varepsilon^2) \quad\quad
\text{ and }\quad\quad
I' = I + \varepsilon\left(\frac{\partial \mathcal{L}^*}{\partial \theta}(I,\theta)\right) + \mathcal{O}(\varepsilon^2).
\end{equation*}

Then, in the variables $(I,\theta)$, the scattering map takes the simple form
\begin{equation}\label{eq:second_definition_SM}
\mathcal{S}(I,\theta)=\left(I+\varepsilon\,\frac{\partial \mathcal{L}^{*}}{\partial \theta}(I,\theta)+\mathcal{O}(\varepsilon^{2}),\theta-\varepsilon \, \frac{\partial \mathcal{L}^{*}}{\partial I}(I,\theta)+\mathcal{O}(\varepsilon^{2})\right),
\end{equation}
so up to $\mathcal{O}(\varepsilon^2)$ terms, $\mathcal{S}(I,\theta)$ is the $-\varepsilon$ times flow of the \emph{autonomous} Hamiltonian $\mathcal{L}^*(I,\theta)$.
In particular, the iterates under the scattering map follow the level curves of $\mathcal{L}^*$ up to $\mathcal{O}(\varepsilon^2)$.
\begin{remark}\label{rem:theta_quasi-periodic}
We notice that the variable $\theta$ is periodic in the variable $\varphi$ and quasi-periodic in the variable $s$.
Fixing $s$, then $\theta$ becomes periodic. 
\end{remark}

\begin{remark}\label{rem:norm_red_poin_fun}
Note that if for some values of $(I,\theta)$ we have that $\nabla\mathcal{L}^*(I,\theta) = \mathcal{O}(\varepsilon)$, so $\varepsilon\partial \mathcal{L}^{*}/\partial\theta(I,\theta)$ $=\mathcal{O}(\varepsilon^2) $ and $\varepsilon\partial \mathcal{L}^{*}/\partial I(I,\theta) = \mathcal{O}(\varepsilon^2)$. 
In this case, the level curves of $\mathcal{L}^*(I,\theta)$ do not provide the dominant part of the scattering map $\mathcal{S}$.
Therefore, we will be able to describe properly the scattering map through the level curves of the Reduced Poincar\'{e} function on the set of $(I,\theta)$ such that $\left\|\nabla \mathcal{L}^*(I,\theta)\right\|\gg\varepsilon$.
\end{remark}

\begin{remark} Using Eq.(\ref{eq:prop_tau}) and setting $s = \sigma$, we have that $\tau^*(I,\varphi - Is,0) =  \tau^*(I,\varphi,s) - s$. 
So we can define
\begin{equation}
\tau^*(I,\theta): = \tau^*(I,\varphi,s) - s\label{eq:tau_theta}
\end{equation}
and from (\ref{eq:L^*}) and (\ref{eq:red_poi_func_1}) we can write $\mathcal{L}^*$ as 
\begin{equation}
\mathcal{L}^*(I,\theta) = \mathcal{L}(I,\theta - I\tau^*(I,\theta),-\tau^*(I,\theta)).\label{eq:red_poi_fun_2}
\end{equation}
\end{remark}

\begin{remark}\label{rem:s_parameter}
In the variables $(I,\theta)$, the variable $s$ does \emph{not} appear at all in the expression \eqref{eq:second_definition_SM_int} for the scattering map, at least up to $\mathcal{O}(\varepsilon^2)$.
However, $s$ does appear in the expression \eqref{eq:first_definition_SM} in the original variables $(I,\varphi)$, so we have in \eqref{eq:first_definition_SM} a family of scattering maps parameterized by the variable $s$.
Playing with the parameter $s$, we can have scattering maps with different properties.
See Lemma \ref{lem:geometrical_lemmas} for an application of this phenomenon. 
\end{remark}

\subsubsection{The crests}

For the computation of the scattering maps, we use an important geometrical object introduced in \cite{Delshams2011}, the \emph{crests}. 
\begin{definition}
Fixed $I$, we define by \emph{crests} $\C(I)$ the curves on $(I,\varphi,s)$, $(\varphi,s)\in\mathbb{T}^2$, satisfying \begin{equation*}
I\frac{\partial \mathcal{L}}{\partial\varphi}(I,\varphi,s) + \frac{\partial \mathcal{L}}{\partial s}(I,\varphi,s)=0.
\end{equation*}
\end{definition}

In our case
\begin{equation}
I\,A_{10}(I)\sin \varphi +A_{01}\sin s =0. \label{eq:our_crests}
\end{equation} 

Note that a point $(I,\varphi,s)$ belongs to a crest $\C(I)$ if it is a minimum or maximum, or more generally, a critical point of $\mathcal{L}$ along a \NH\, line \eqref{eq:def_R}, that is, $\tau^*(I,\varphi,s) = 0$ in \eqref{eq:def_of_tau}, see Fig. \ref{fig:level_cur_crests}.

\begin{figure}[h]
\centering
\includegraphics[scale=0.27]{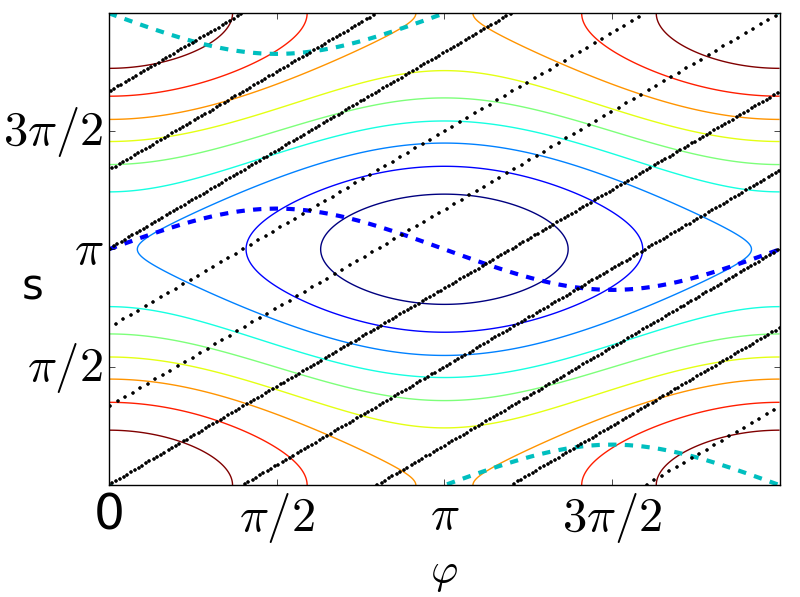}
\caption{Level curves of $\mathcal{L}$ for $\mu = a_{10}/a_{01} = 0.5$ and $I = 1.2$. Crests (dashed) in blue and green and the \NH\, lines in black. }\label{fig:level_cur_crests}
\end{figure}
 \begin{remark}
Note that any critical point of $\mathcal{L}(I,\cdot,\cdot)$ belongs to the crest $\C(I)$.
In general we have two curves satisfying Eq.(\ref{eq:our_crests}), the \emph{maximum} crest $\C_{\text{M}}(I)$, and the \emph{minimum} crest $\C_{\text{m}}(I)$.
The maximum crest contains the point $(I,\varphi = 0,s= 0)$, and the minimum crest the point $(I,\varphi = \pi,s =\pi)$.
For $a_{10}>0$, $a_{01}>0$, the Melnikov function has a maximum point at the point $(I,\varphi,s) = (I,0,0)$, and a minimum at $(I,\pi,\pi)$, and the function \eqref{eq: SM_critical_point} has a maximum on $\C_{\M}(I)$, and  a minimum on $\C_{\m}(I)$.
For other combinations of signs of $a_{10},\,a_{01}$, the location of maxima and minima changes, but for simplicity, we have preserved the name of maximum and minimum crest.   
\end{remark}

We now proceed to study the crests.
By \eqref{eq:def_coef_mel_pot} we can rewrite Eq. (\ref{eq:our_crests}) as
\begin{equation}
\mu\alpha(I)\,\sin \varphi +\sin s =0,\label{eq:def_crista}
\end{equation}
where
\begin{equation}
\alpha(I)=\frac{IA_{10}(I)}{\mu A_{10}}=\dfrac{\sinh(\frac{\pi}{2})\,I^{2}}{\sinh(\frac{\pi\,I}{2})}\quad\quad\text{ and }\quad\quad \mu=\dfrac{a_{10}}{a_{01}}.\label{eq:alpha}
\end{equation}

Note that if $\left|\mu\alpha(I)\right|<1$ we can write $s$ as a function of $\varphi$ for any value of $\varphi$.
On the other hand, if $\left|\mu\alpha(I)\right| >1$ we can write $\varphi$ as a function of $s$.
So, we have two different kinds of crests:
\begin{itemize}
\item For $\left|\alpha(I)\right|\, <\, 1/\left|\mu\right|$, the two crests are horizontal, see Fig. \ref{fig:horizontal_crests}, with 
$$\C_{\M,\m}(I)=\{(I,\varphi,\xi_{\M,\m}(I,\varphi)):\varphi\in\mathbb{T}\},$$
\begin{eqnarray}\label{eq:cristas_06}
\xi_{\M}(I,\varphi)&=&-\arcsin(\mu\alpha(I)\sin \varphi )  \quad\quad\mod{2\pi}\\
\xi_{\m}(I,\varphi)&=&\arcsin(\mu\alpha(I)\sin \varphi )+\pi \quad\quad\mod{2\pi}.\nonumber
\end{eqnarray}

\begin{figure}[h]
\centering
\subfigure[Horizontal crests: $\mu = a_{10}/a_{01} = 0.6$ and $I = 1.2$.]{\includegraphics[scale=0.27]{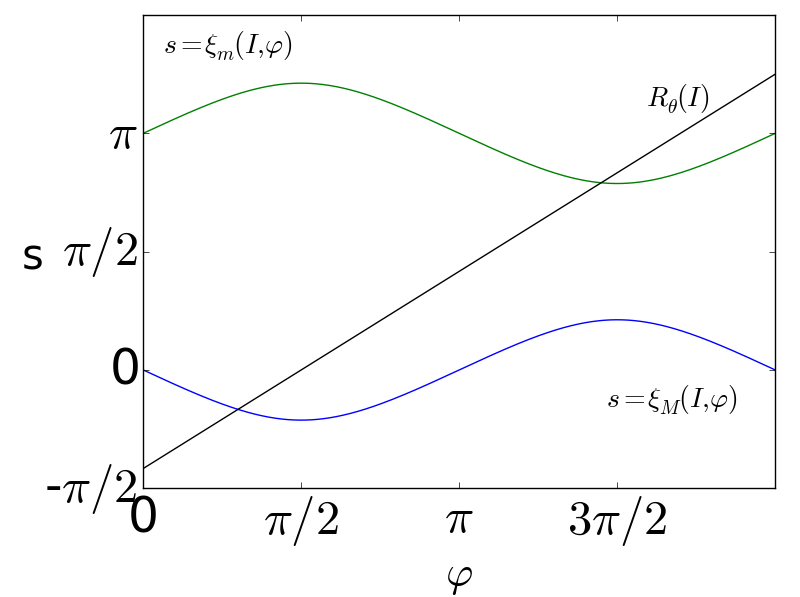}\label{fig:horizontal_crests}}
\qquad
\subfigure[Vertical crests: $\mu = a_{10}/a_{01} = 1.2$ and $I~ =~ 1$.]{\includegraphics[scale=0.27]{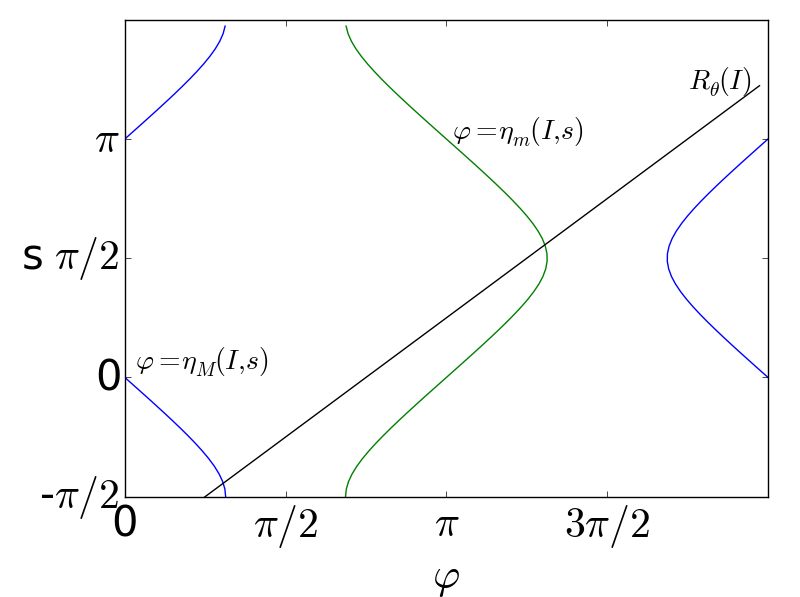}\label{fig:vertical_crests}}
\caption{Types of crests.}
\end{figure}
\item  For $\left|\alpha(I)\right|\, >\, 1/\left|\mu\right|$, the two crests are vertical, see Fig. \ref{fig:vertical_crests}, with
$$\C_{M,m}(I)=\{(I,\eta_{M,m}(I,s),s):s\in\mathbb{T}\},$$
\begin{eqnarray}
\eta_{M}(I,s)&=&-\arcsin(\sin s/\left(\mu \alpha(I)\right))\quad\quad\mod{2\pi} \label{eq:eta_definition}\\
\eta_{m}(I,s)&=&\arcsin(\sin s/\left(\mu \alpha(I)\right))+\pi\quad\quad\mod{2\pi}.\nonumber
\end{eqnarray}
\end{itemize}

\begin{remark}
The case $\left|\alpha(I)\right| = 1/\left|\mu\right|$ is singular, since both crests are piecewise \NH\, lines and they touch each other at the points $\left(\varphi,s\right) = \left(\pi/2,3\pi/2\right),\,\left(3\pi/2,\pi/2\right)$.
See Fig.~\ref{fig:singular_case}.
\begin{figure}[h]
\centering
\includegraphics[scale=0.25]{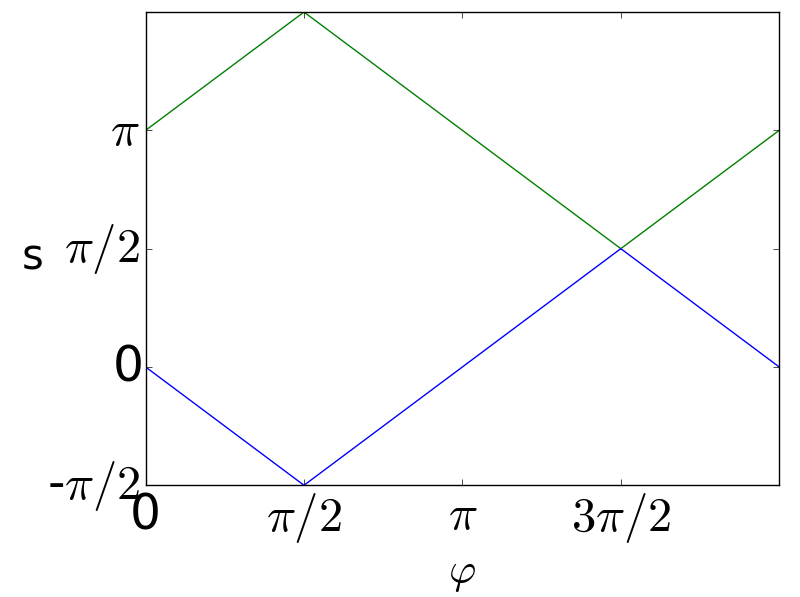}
\caption{Singular case: Crests for $I = 1$ and $\mu = 1$.}\label{fig:singular_case}
\end{figure}
\end{remark}
We can describe the relation between the crests $\C(I)$ and the \NH\, lines $R(I,\varphi,s)$ through the following Proposition:

\begin{proposition}\label{lem:crest}
Consider the crest $\C(I)$ defined by \eqref{eq:def_crista} and the \NH\, line $R(I,\varphi,s)$ defined in \eqref{eq:def_R}.
\begin{itemize}
\item[a)] For $\left|\mu\right| < 0.625$ the crests are horizontal and the intersections  between any crest and any \NH\, line is transversal.
\item[b)] For $0.625\leq\left|\mu\right| \leq 0.97$ the two crests $\C(I)$ are still horizontal, but for some values of $I$ there exist two \NH\, lines $R(I,\varphi,s)$ which are quadratically tangent to the crests.
\item[c)] For $\left|\mu\right| > 0.97$, the same properties as stated in b) hold, except that for $\left|\mu\alpha(I)\right|>1$, the crests $\C(I)$ are vertical.
\end{itemize}
\end{proposition}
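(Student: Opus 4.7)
The plan is to reduce the geometry of crest--NHIM line intersections, at each fixed $I$, to the zero set of a single real-analytic function of one variable, and then detect tangencies as double zeros. Parameterizing $R(I,\varphi,s)$ by $s\in\mathbb{T}$ via $\varphi=\theta+Is$, its intersection with the crest \eqref{eq:def_crista} coincides with the zero set of
\begin{equation*}
\Psi(s):=\mu\,\alpha(I)\sin(\theta+Is)+\sin s,
\end{equation*}
so transversal intersections correspond to simple zeros ($\Psi'(s)\neq 0$) and quadratic tangencies to double zeros ($\Psi'(s)=0\neq\Psi''(s)$). Eliminating $(s,\varphi)$ from $\Psi=0$ and $\Psi'(s)=I\mu\alpha(I)\cos\varphi+\cos s=0$ via $\sin^{2}s+\cos^{2}s=1$, the tangency locus reads
\begin{equation*}
\mu^{2}\bigl[\alpha^{2}(I)\sin^{2}\varphi+\beta^{2}(I)\cos^{2}\varphi\bigr]=1,\qquad\beta(I):=I\alpha(I).
\end{equation*}
As $\cos^{2}\varphi$ sweeps $[0,1]$ the left-hand side covers the closed interval with endpoints $\mu^{2}\alpha^{2}(I)$ and $\mu^{2}\beta^{2}(I)$, so a tangent NHIM line at level $I$ exists precisely when $1$ lies between these two values.

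The numerical core of the argument is the analysis of $\alpha$ and $\beta$. Both are smooth, vanish at $I=0$ and as $|I|\to\infty$, and are unimodal on $I>0$. Writing $u=\pi I/2$, the critical-point equations $\alpha'(I)=0$ and $\beta'(I)=0$ reduce respectively to the transcendental equations $\tanh u=u/2$ and $\tanh u=u/3$, whose unique positive roots $u\approx 1.915$ and $u\approx 2.985$ yield $\alpha_{\max}\approx 1.03$ at $I\approx 1.22$ and $\beta_{\max}\approx 1.60$ at $I\approx 1.90$. The thresholds $\mu_{1}:=1/\beta_{\max}\approx 0.625$ and $\mu_{2}:=1/\alpha_{\max}\approx 0.975$ therefore partition the parameter line into the three regimes of the statement.

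The three cases then follow directly. For case (a), $|\mu|<\mu_{1}$ forces $|\mu|\alpha(I)<1$ and $|\mu|\beta(I)<1$ for all $I$, so $1$ never lies between $\mu^{2}\alpha^{2}(I)$ and $\mu^{2}\beta^{2}(I)$, no tangency occurs, and the crests remain horizontal by \eqref{eq:cristas_06}. For case (b), $\mu_{1}\leq|\mu|\leq\mu_{2}$ still keeps $|\mu|\alpha(I)\leq 1$ (horizontal crests) but allows $|\mu|\beta(I)\geq 1$ on an interval of $I$, producing pairs of tangent NHIM lines. For case (c), $|\mu|>\mu_{2}$ makes $|\mu|\alpha(I)>1$ on an interval, turning the crests vertical there by \eqref{eq:eta_definition}, while the tangency analysis of (b) persists outside. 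The quadratic character of the tangencies follows from the computation $\Psi''(s)=-I^{2}\mu\alpha(I)\sin\varphi-\sin s=-(I^{2}-1)\mu\alpha(I)\sin\varphi$ on the crest, which is nonzero away from the codimension-one loci $\sin\varphi=0$ or $I^{2}=1$.

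The main obstacle is the numerical resolution of $\mu_{1}$ and $\mu_{2}$ with enough precision to justify the approximations $0.625$ and $0.97$, which requires rigorous bracketing of the positive roots of $\tanh u=u/k$ for $k=2,3$ together with the corresponding maxima of $\alpha$ and $\beta$. A secondary delicate point is the handling of the degenerate tangencies with $\sin\varphi=0$: these are precisely the loci $|\mu\beta(I)|=1$ bounding the case (b) region, where the double zero of $\Psi$ becomes triple, and they must be isolated from the generic quadratic tangencies.
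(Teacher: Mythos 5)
Your proof is correct and takes essentially the same approach as the paper's: both reduce tangency between NHIM lines and crests to the condition $\mu^2\bigl[\alpha^2(I)\sin^2\varphi+\beta^2(I)\cos^2\varphi\bigr]=1$ and then read off the three regimes from the numerical maxima $\alpha_{\max}\approx 1.03$ (giving the threshold $0.97$) and $\beta_{\max}\approx 1.6$ (giving $0.625$), the paper phrasing the tangency as $\partial\xi/\partial\varphi=1/I$ where you phrase it as a double zero of $\Psi(s)$. Your version is somewhat more self-contained (the paper defers to \cite{Delshams2011} and to later subsections) and adds the explicit check $\Psi''=-(I^2-1)\mu\alpha(I)\sin\varphi\neq 0$ for the quadratic character, which the paper does not spell out here.
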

\begin{proof}
The ``horizontality'' of a) and b) and the ``verticality'' of c) are due the upper bound of $\left|\mu\right|$.
Since $\left|\alpha(I)\right| < 1/0.97$ (see Fig.\ref{fig:module_alpha}), for $\left|\mu\right| \leq 0.97$, the crests are horizontal, that is, they can be expressed by equations \eqref{eq:cristas_06}. 

The condition of transversality is proved in \cite{Delshams2011}.
Essentially, the proof is to observe that  $\left|I\alpha(I)\right|<1.6$ and that there exists a $\varphi$ such that $\partial\xi(I,\varphi)/\partial \varphi = 1/I$ if, only if, $\left|I\alpha(I)\right|< 1/\left|\mu\right|$(we will prove it in a slightly different context, see the proof of Proposition \ref{pro:geometrical_proposition}.)

About the amount of \NH\, lines tangents to $\C(I)$, the proof is given in subsection \ref{sec:mult_scat_map}.
\end{proof}
\begin{figure}[h]
\centering
\includegraphics[scale=0.2]{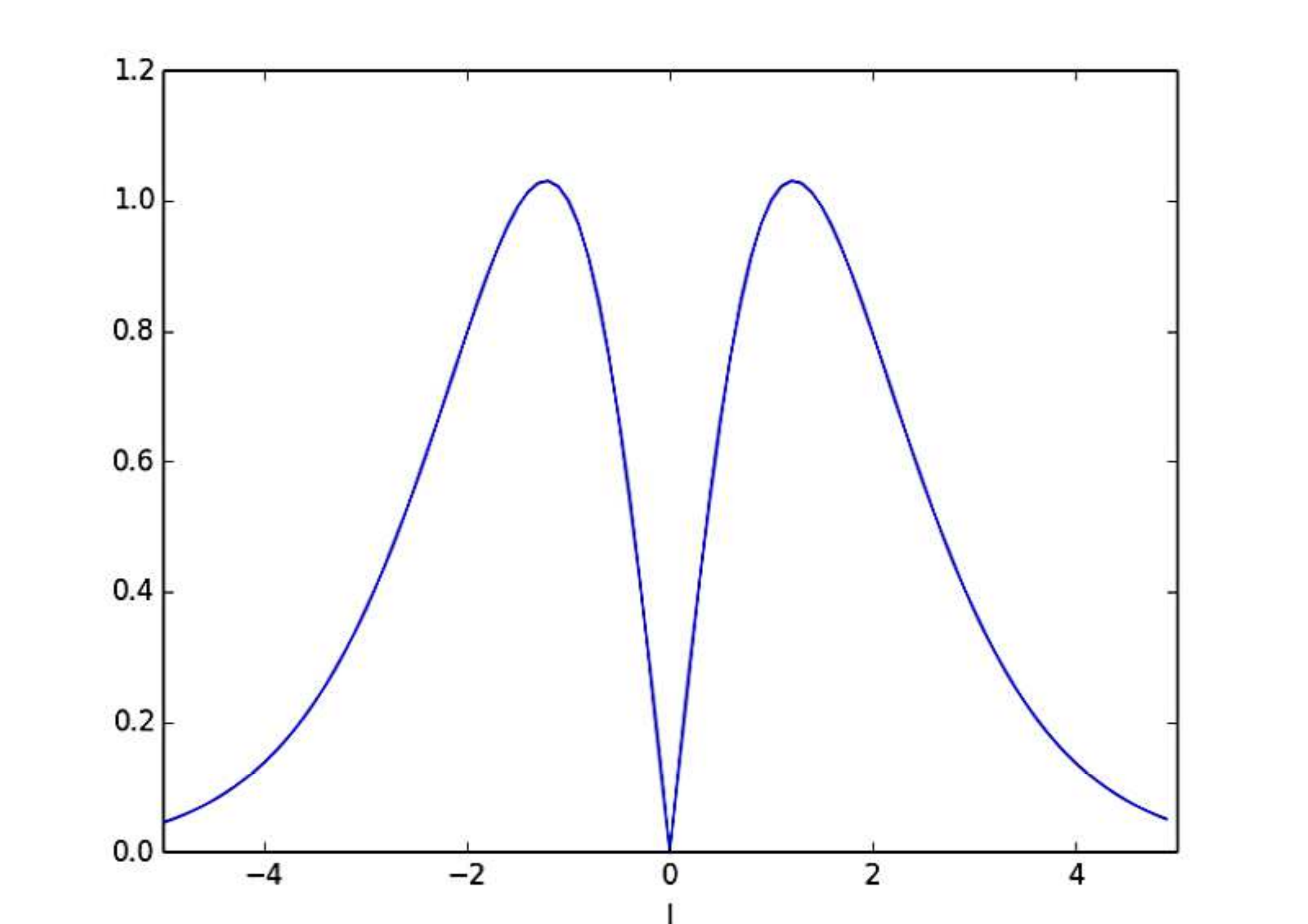}
\caption{Graph of $\left|\alpha(I)\right|$ }\label{fig:module_alpha}
\end{figure}
In Figs. \ref{fig:horizontal_crests} and \ref{fig:vertical_crests} we have displayed a segment of the the \NH\, line $R(I,\varphi,s)$, $\left|\tau\right| <\pi$, and we see that it intersects each crest $\C_{\text{M}}(I)$ and $\C_{\text{m}}(I)$  transversally, giving rise to two values $\tau^*_{\text{M}}$ and $\tau^*_{\text{m}}$ , therefore to two different scattering maps. 
We denote by $\tau^*_{\text{M}}$ the $\tau$ with minimum absolute value such that given $(I,\varphi,s)$, $(I,\varphi - I\tau,s-\tau) \in \C_{\text{M}}(I)$  and $\tau^*_{\text{m}}$ is defined analogously when $(I,\varphi - I\tau,s-\tau) \in \C_{\text{m}}(I)$ (see \cite{Delshams2011}).

\subsubsection{Scattering maps and crests}

Note that $\tau^{*}_{\text{m}}$ and $\tau^*_{\text{M}}$ are associated to different homoclinic points to the NHIM $\tilde{\Lambda}$, and consequently, to different homoclinic connections.
From this we build different scattering maps.
The most natural way is to associate one scattering map to each crest.
And we will do this on the variables $(I,\varphi,s)$ and $(I,\theta)$, where $\theta = \varphi - Is$.

Before, we make some considerations about the \NH\,lines defined in \eqref{eq:def_R}.
Note that 
$$\theta :=\varphi - Is = \left(\varphi - I\tau\right) - I\left(s-\tau\right),$$
that is, $\theta$ is constant on each \NH\, line $R(I,\varphi,s)$, so we will also introduce another notation for a \NH\,line $R(I,\varphi,s)$, namely
$$R_{\theta}(I):=\{(I,\varphi,s):\varphi-Is =\theta\}.$$

Since $(\varphi,s)\in\mathbb{T}^2$, $R(I,\varphi,s)$ is a closed line if $I\in\mathbb{Q}$, whereas it is a dense line on $\mathbb{T}^2$ if $I\notin\mathbb{Q}$.
In this case, $R(I,\varphi,s)$ intersects the crests $\C(I)$ on an infinite number of points.

Recall (see Remark \ref{rem:theta_quasi-periodic}) that $\theta$ is quasi-periodic in the variable $s\in\mathbb{T}$.
To avoid monodromy  with respect to this variable, we are going to consider from now on $s$ as a real variable in an interval of length $2\pi$, $-\pi/2 < s \leq 3\pi/2$.
Under this restriction, the \NH\, line $R(I,\varphi,s)$ defined in (\ref{eq:def_R}) becomes a \emph{\NH\,segment}
\begin{equation}
R(I,\varphi,s) = \{\left(I,\varphi-I\tau, s - \tau\right);-\pi/2< s-\tau\leq 3\pi/2\},\label{eq:line_segments_I_varphi}
\end{equation}
as well as $R_{\theta}(I)$, which can be written as
\begin{equation}
R_{\theta}(I) = \{(I,\varphi,s):\varphi -Is = \theta,(\varphi,s)\in\mathbb{T}\times\left(-\pi/2,3\pi/2\right]\}.\label{eq:line_segments_I_theta}
\end{equation}
From now on, when we refer to $R(I,\varphi,s)$ and $R_{\theta}(I)$, they will be these line segments.
Notice that $\theta \in \mathbb{T}$.

We begin to consider the \emph{primary} scattering map $\mathcal{S}_{\M}$ associated to the maximum crest $\C_{\M}$, that is, we look only at the intersections between the segment $R(I,\varphi,s)$ given in \eqref{eq:line_segments_I_varphi} and $\C_{\M}(I)$, parameterized by $\tau_ {\text{M}}^*(I,\varphi,s) = \tau_{\text{M}}^*(I,\theta) + s$ (see \eqref{eq:tau_theta}):
\begin{eqnarray}
\C_{\M}(I)\cap R(I,\varphi,s) & = & \left\{\left(I,\varphi - I\tau_{\M}^*(I,\varphi,s),\xi_{\M}(I,\varphi-I\tau_{\M}^*(I,\varphi,s))\right)\right\}\label{eq:inters_ret_crst_1}\\
&=&\left\{\left(I,\varphi - I\tau_{\M}^*(I,\varphi,s),s-\tau_{M}^*(I,\varphi,s)\right)\right\}\label{eq:inters_ter_crst_2}
\end{eqnarray}

Equation \eqref{eq:inters_ret_crst_1} motivates us to introduce a new variable $\psi = \varphi - I\tau_{\M}^*(I,\varphi,s)$ that will be useful in many contexts.

\pagebreak[4]

\paragraph{The variable $\psi$: a variable on the crest.\\\\}

Let $\C(I)$ be a crest such that it can be parameterized by $\xi(I,\varphi)$ as in (\ref{eq:cristas_06}).
Since $\tau^*(I,\varphi,s)$ is the value of $\tau$ such that $R(I,\varphi,s)$, given in (\ref{eq:line_segments_I_varphi}), intersects $\C(I)$, we define
\begin{equation}
\psi: = \varphi - I\tau^*(I,\varphi,s).\label{def:psi_original}
\end{equation}

By (\ref{eq:tau_theta}) we can  also write $\psi$ in terms of the variable $\theta$:
\begin{equation}\label{eq:def_psi}
\psi = \varphi - I\left(\tau^*(I,\theta) + s\right) = \theta - I\tau^*(I,\theta).
\end{equation}

By (\ref{eq:inters_ret_crst_1}) and (\ref{eq:inters_ter_crst_2}),
\begin{equation}
s-\tau^*(I,\varphi,s) = \xi(I,\varphi - I\tau^*(I,\varphi,s)),\label{eq:rel_bet_s-tau_and_xi}
\end{equation}
that is, $s-\tau^*(I,\varphi,s) = \xi(I,\psi)$.
In particular, for $s = 0$, $\xi(I,\psi) = -\tau^*(I,\varphi,0) = -\tau^*(I,\theta)$ again by \eqref{eq:tau_theta} and from \eqref{eq:def_psi} we have the expression of $\theta$ in terms of $\psi$:
\begin{equation}\label{eq:def_theta_psi}
\theta = \psi - I\xi(I,\psi).
\end{equation}
All the relations between the variables $(\varphi,s)$, $\theta$ and $\psi$ are written in Table \ref{tab:table_variables} and are displayed in Fig. \ref{fig:variables}.
By the definitions of $L^*(I,\varphi,s)$ in (\ref{eq:L^*}), and $\mathcal{L}^*(I,\theta)$ in \eqref{eq:red_poi_func_1} and \eqref{eq:red_poi_fun_2}, we have that
\begin{equation}
\mathcal{L}^*(I,\theta) = L^*(I,\varphi,s) = \mathcal{L}(I,\psi,\xi(I,\psi)),\label{eq:three_rpf}
\end{equation}
So we can define the reduced Poincar\'{e} function in terms of $(I,\psi)$ as
\begin{equation}
\mathfrak{L}^*(I,\psi):=\mathcal{L}(I,\psi,\xi(I,\psi)),\label{eq:red_poi_fun_psi}
\end{equation}
which in our case takes the simple and computable form
\begin{equation}\label{eq:red_poin_fun_psi_2}
\mathfrak{L}^*(I,\psi) = A_{00}+A_{10}(I)\cos\psi + A_{01}\cos\xi(I,\psi),
\end{equation}
for a horizontal crest \eqref{eq:cristas_06}.

Therefore, as $(I,\psi,\xi(I,\psi))$ are points on the crest, the domain of $L^*(I,\cdot,\cdot)$ is a subset of $C(I)$.
So, if there exist different subsets where $L^*(I,\cdot,\cdot)$ can be well defined, we can build different scattering maps associated to $C(I)$.

\begin{figure}[h]
\parbox[t]{8cm}{\null
  \centering
  \includegraphics[scale=0.4]{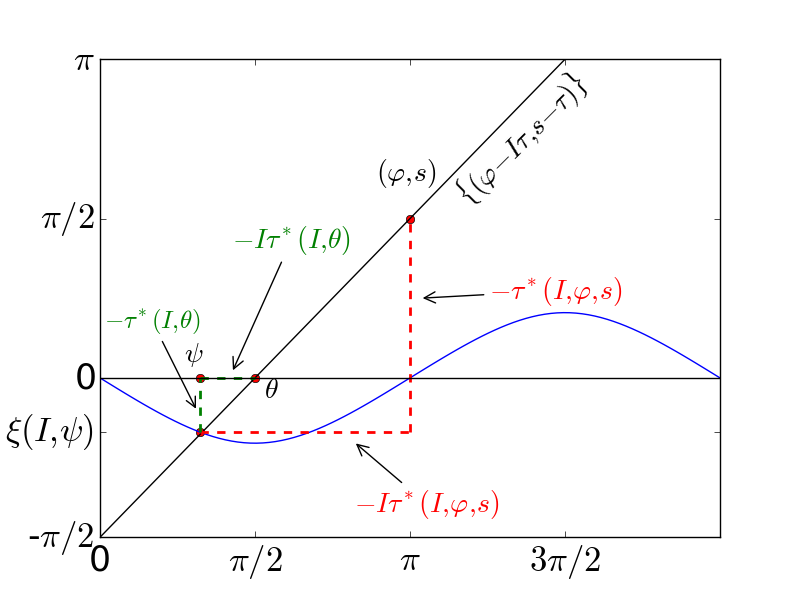} 
      \captionof{figure}{The three variables: $\varphi,\,\theta$ and $\psi$.}\label{fig:variables}%
}
\parbox[t]{7cm}{\null
\centering
\vspace{1cm}
{\footnotesize
\begin{tabular}{@{}|c|c|@{}}
\toprule
$\theta = \psi - I\xi(I,\psi)$ &  $\psi = \theta - I\tau^*(I,\theta)$ \\ \midrule
$\theta = \varphi - Is$ & $ \varphi =  \theta + Is$\\ \midrule
 $\psi = \varphi - I\tau^*(I,\varphi,s)$&$\varphi = \psi + I\left(s - \xi(I,\psi)\right)$  \\ \bottomrule
  \end{tabular}
  }
  \captionof{table}[t]{Relation between variables.}\label{tab:table_variables}%
}
\end{figure}

Denote $\mathcal{L}_i^*(I,\theta) = L(I,\varphi - I\tau^*_i(I,\varphi,s),s-\tau_i^*(I,\varphi,s))$, $i = \m,\M$, and $\mathfrak{L}^*_i(I,\psi)=\mathcal{L}(I,\psi,\xi_i(I,\psi))$ from \eqref{eq:three_rpf} and \eqref{eq:red_poi_fun_psi}.
We state the following lemma
\begin{lemma}\label{lem:geometrical_lemmas}
\begin{itemize}
\item[a)]The Poincar\'{e} Reduced functions $\mathfrak{L}^*_{\text{M}}(I,\psi)$ and $\mathcal{L}^*_{\M}(I,\theta)$  are even functions in the variable $I$, that is, $\mathfrak{L}^*_{\text{M}}(I,\psi) = \mathfrak{L}_{\text{M}}^*(-I,\psi)$ and $\mathcal{L}^*_{\M}(I, \theta) = \mathcal{L}_{\M}^*(-I,\theta)$, and consequently $\mathcal{S}_{\text{M}}(I,\theta)$ is symmetric in this variable $I$.
The same happens for $\mathcal{S}_{\text{m}}(I,\theta)$, that is, for the scattering map associated to $C_{\m}(I)$.
\item[b)]The scattering map for a value of $\mu$ and $s = \pi$, associated to the intersection between $R_{\theta}(I)$ and $C_{\m}(I)$ has the same geometrical properties as the scattering map for $-\mu$ and $s = 0$, associated to the intersection between $R_{\theta}(I)$ and $C_{\M}(I)$, i.e.,
\begin{equation*}\label{eq:equivalence_sign_mu}
S_{\mu,\text{m}}(I,\varphi,\pi) = S_{-\mu,\text{M}}(I,\varphi,0) = \mathcal{S}_{-\mu,\M}(I,\theta)
\end{equation*}
\end{itemize}
\end{lemma}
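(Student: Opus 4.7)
My plan is to exploit the explicit closed-form expressions \eqref{eq:red_poin_fun_psi_2} for $\mathfrak{L}^*$ together with the parity of the coefficients $A_{10}(I)$, $A_{01}$ and of the function $\alpha(I)$ defined in \eqref{eq:def_coef_mel_pot}--\eqref{eq:alpha}. The key observation is that $A_{10}(I)=2\pi I a_{10}/\sinh(\pi I/2)$ is \emph{even} in $I$, whereas $\alpha(I)=\sinh(\pi/2)\,I^2/\sinh(\pi I/2)$ is \emph{odd}.

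For part (a), the odd character of $\alpha$ forces the quantity $\mu\alpha(I)\sin\psi$ inside the $\arcsin$ in \eqref{eq:cristas_06} to change sign under $I\mapsto -I$, so that $\xi_{\M}(-I,\psi)=-\xi_{\M}(I,\psi)$ (and likewise for $\xi_{\m}$). Inserting this into \eqref{eq:red_poin_fun_psi_2} and using the evenness of $\cos$ and of $A_{10}(I)$ yields $\mathfrak{L}^*_{\M}(-I,\psi)=\mathfrak{L}^*_{\M}(I,\psi)$ at once. To transfer the parity to the $\theta$ variable, I would note that the change of variables $\theta=\psi-I\xi(I,\psi)$ from \eqref{eq:def_theta_psi} is itself invariant under $I\mapsto -I$, since the two sign flips (of $I$ and of $\xi$) cancel. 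Hence the implicit inverse $\psi=\psi(I,\theta)$ is even in $I$, and evenness of $\mathcal{L}^*_{\M}(I,\theta)=\mathfrak{L}^*_{\M}(I,\psi(I,\theta))$ follows. Finally, by \eqref{eq:second_definition_SM} the map $\mathcal{S}_{\M}$ is, to first order, the $-\varepsilon$ time flow of the Hamiltonian $\mathcal{L}^*_{\M}(I,\theta)$; evenness of this Hamiltonian in $I$ means that $\partial_\theta\mathcal{L}^*_{\M}$ is even and $\partial_I\mathcal{L}^*_{\M}$ is odd in $I$, so both components of the vector field flip sign under $I\mapsto -I$. This is precisely the statement that the involution $(I,\theta)\mapsto(-I,\theta)$ conjugates $\mathcal{S}_{\M}$ with $\mathcal{S}_{\M}^{-1}$, equivalently that the level-curve picture of $\mathcal{L}^*_{\M}$ is symmetric across $I=0$. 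The argument for $\mathcal{S}_{\m}$ is identical.

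For part (b), I would first verify directly from \eqref{eq:cristas_06}, using $\arcsin(-x)=-\arcsin x$, the identity $\xi_{\mu,\m}(I,\varphi)=\xi_{-\mu,\M}(I,\varphi)+\pi$. Substituting $s=\pi$ into the critical-point equation \eqref{eq:def_of_tau} for $\C_{\m}$ with parameter $\mu$, and substituting $s=0$ into the analogous equation for $\C_{\M}$ with parameter $-\mu$, both collapse to the \emph{same} implicit equation $\arcsin(\mu\alpha(I)\sin(\varphi-I\tau))=-\tau$ (after an application of $\cos(\pi-\tau)=-\cos\tau$). Thus $\tau^{*}_{\mu,\m}(I,\varphi,\pi)=\tau^{*}_{-\mu,\M}(I,\varphi,0)$. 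Evaluating $L^{*}$ in each case, the two expressions for $\mathcal{L}_{\mu}(I,\varphi-I\tau^{*},\pi-\tau^{*})$ and $\mathcal{L}_{-\mu}(I,\varphi-I\tau^{*},-\tau^{*})$ differ only in the signs of the $A_{10}\cos$ and $A_{01}\cos$ terms (because $A_{10}^{(-\mu)}=-A_{10}^{(\mu)}$ and $\cos(\pi-\tau)=-\cos\tau$), so they add up to $2A_{00}$. Since both scattering maps are generated, at first order in $\varepsilon$, by these reduced Poincar\'e functions, and these differ only by a sign and an additive constant, their level curves—and hence their orbits, up to direction of traversal—coincide, which is what the claimed equality of ``geometrical properties'' means.

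The step that I expect to demand the most care is the interplay between the three coordinate systems $(I,\varphi,s)$, $(I,\theta)$ and $(I,\psi)$ in part (a): the evenness of $\mathfrak{L}^{*}$ in $I$ does not automatically transfer through the $I$-dependent change $\psi\leftrightarrow\theta$, and I need to verify that this change itself commutes with the involution $I\mapsto -I$. Once that bookkeeping is handled, both parts reduce to short algebraic manipulations enabled by the explicit form \eqref{eq:red_poin_fun_psi_2} and the parity of the elementary functions $A_{10}(I)$ and $\alpha(I)$.
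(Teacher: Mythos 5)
Your part (a) is correct and is essentially the paper's own argument, just spelled out: the paper states that evenness follows from $A_{10}(I)$ being even and $\xi_{\M}(I,\varphi)$ being odd, and you reconstruct exactly this, adding the (correct and helpful) check that the change of variables $\theta=\psi-I\xi(I,\psi)$ commutes with $I\mapsto -I$, so evenness transfers from $\mathfrak{L}^*$ to $\mathcal{L}^*$. Your interpretation of the vague ``$\mathcal{S}_{\M}$ is symmetric in $I$'' as the conjugation $(I,\theta)\mapsto(-I,\theta)$ sending $\mathcal{S}_{\M}$ to $\mathcal{S}_{\M}^{-1}$ is a reasonable (indeed sharper) reading.

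In part (b) there is a genuine gap caused by your choice of how to realize $\mu\mapsto -\mu$. You take $a_{10}\mapsto -a_{10}$, so $A_{10}^{(-\mu)}=-A_{10}^{(\mu)}$ and $A_{01}^{(-\mu)}=A_{01}^{(\mu)}$; this leads you to $L^*_{\mu,\m}(I,\varphi,\pi)+L^*_{-\mu,\M}(I,\varphi,0)=2A_{00}$, i.e.\ the two Hamiltonians are \emph{opposite} up to an additive constant. This gives $S_{\mu,\m}(I,\varphi,\pi)=S_{-\mu,\M}^{-1}(I,\varphi,0)$ up to $\mathcal{O}(\varepsilon^2)$, which is weaker than the literal identity $S_{\mu,\m}(I,\varphi,\pi)=S_{-\mu,\M}(I,\varphi,0)$ asserted in the lemma. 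The paper instead realizes $\mu\mapsto -\mu$ by $a_{01}\mapsto -a_{01}$, so $A_{10}^{(-\mu)}=A_{10}^{(\mu)}$ and $A_{01}^{(-\mu)}=-A_{01}^{(\mu)}$; then $\cos(\pi-\tau^*)=-\cos(-\tau^*)$ exactly cancels the sign change in $A_{01}$ and one gets the on-the-nose equality $L^*_{\mu,\m}(I,\varphi,\pi)=L^*_{-\mu,\M}(I,\varphi,0)$, hence equality of scattering maps, not just of unoriented level-curve families. Your identification $\tau^*_{\mu,\m}(I,\varphi,\pi)=\tau^*_{-\mu,\M}(I,\varphi,0)$ is fine (the crest equation $\mu\alpha(I)\sin\varphi+\sin s=0$ depends only on $\mu$, not on how the sign flip is distributed between $a_{10}$ and $a_{01}$), so the fix is purely at the last step: switch to the $a_{01}\mapsto -a_{01}$ convention, after which your comparison of the two Melnikov potential expressions yields equality rather than a sign-flipped relation, matching the lemma exactly.
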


\begin{proof}
\begin{itemize}
\item[a)]This is an immediate consequence of the fact that function $A_{10}(I)$ is even and $\xi_{\M}(I,\varphi)$ is odd in the variable $I$, see  \eqref{eq:def_coef_mel_pot} and \eqref{eq:cristas_06}. 
\item[b)]First, we look for $\tau^*_{\text{m}}$ such that the \NH\, segment $R_\theta(I)$ intersects the crest $C_{\text{m}}(I)$.
If we fix $s=\pi$, we have by \eqref{eq:L^*} and \eqref{eq:our_meln_potential}:
\begin{equation}
L_{\mu,\m}^*(I,\varphi,\pi) = A_{00}+ A_{10}(I)\cos(\varphi-I\tau_{\text{m}}^*(I,\varphi,\pi))+A_{01}\cos(\pi-\tau_{\text{m}}^*(I,\varphi,\pi)).\label{eq:mel_pot_phi_mu_neg}
\end{equation}
Besides, we have by (\ref{eq:def_of_tau})
\begin{equation*}
IA_{10}(I)\sin(\varphi - I\tau_{\text{m}}^*) + A_{01}\sin(\pi-\tau_{\text{m}}^*) = 0,
\end{equation*}
which, introducing $\mu$ \eqref{eq:alpha}, is equivalent to
\begin{equation}\label{eq:def_tau_mu_neg}
\mu\alpha(I)\sin(\varphi - I\tau_{\text{m}}^*) + \sin(\pi-\tau_{\text{m}}^*) =0,
\end{equation}
or
\begin{equation}
-\mu\alpha(I)\sin(\varphi - I\tau_{\text{m}}^*) +\sin(-\tau_{\text{m}}^*) =0.\label{eq:equ_def_tau_mu_neg_and_pos}
\end{equation}

By \eqref{eq:rel_bet_s-tau_and_xi} and \eqref{eq:cristas_06} we have that $\pi - \tau_{\text{m}}^*  =  \xi_{\text{m}}(I,\varphi - I\tau_{\text{m}}^*)$ for $\pi/2 \leq\xi_{\text{m}}\leq 3\pi/2$ and therefore $-\pi/2\leq - \tau_{\text{m}}^*\leq \pi/2$.

By looking at (\ref{eq:def_tau_mu_neg}) and (\ref{eq:equ_def_tau_mu_neg_and_pos}), $\tau_{\text{m}}^*(I,\varphi,\pi)$ for $\mu$ is solution of the same equation as $\tau_{\text{M}}^*(I,\varphi,0)$ for $-\mu$, and lies in the same interval $-\pi/2 \leq -\tau_{\text{M}}^*\leq \pi/2$.  
Therefore $\tau_{\text{m}}^*(I,\varphi,\pi)$ for $\mu$ is equal to $\tau_{\text{M}}^*(I,\varphi,0)$ for $-\mu$.
From (\ref{eq:mel_pot_phi_mu_neg}), $L^*_{\mu,\m}(I,\varphi,\pi)$ satisfies 
\begin{eqnarray*}
L_{\mu,\text{m}}^*(I,\varphi,\pi) &=& A_{00} + A_{10}(I)\cos(\varphi - \tau_{\text{M}}^*(I,\varphi,0)) +(-A_{01})\cos(-\tau_{\text{M}}^*(I,\varphi,0))\\
&=& L_{-\mu,\text{M}}^*(I,\varphi,0).
\end{eqnarray*}

Since $L^*_{\mu,\m}(\cdot,\cdot,\pi)$ and $L^*_{-\mu,\M}(\cdot,\cdot,0)$ coincide, their derivatives too and this implies that $S_{\mu,\text{m}}(I,\varphi,\pi) =S_{-\mu,\text{M}}(I,\varphi,0) = \mathcal{S}_{-\mu,\text{M}}(I,\theta)$.
\end{itemize}
\end{proof}

The importance of the part b) of this lemma is that, concerning diffusion, the study for a positive $\mu$ using $\mathcal{S}_{\M}(I,\theta)$ is equivalent to the study for $-\mu$ using $S_{\m}(I,\varphi,\pi)$, i.e., if we ensure the diffusion for a positive $\mu$, we can ensure it for a negative one (just changing the scattering map).
Besides, since $\mathcal{S}_{\M}(I,\theta)$ symmetric in the variable $I$ (from the first part of the lemma), from now on we will consider always $I \geq 0$, $\mu >0$ and $\mathcal{S}_{\M}$.

Now we are going to describe the influence of the intersections between the crests and the \NH\, segments with respect to the parameter $\mu$ described in Proposition \ref{lem:crest} on the scattering map associated to such crests.

\subsubsection{Single scattering map: $\mu<0.625$\label{sub:3.2.4}}

As in \cite{Delshams2011}, assuming $\mu< 1/1.6 = 0.625$, the crests are horizontal and there is no tangency between $R_{\theta}(I)$ and $\C_{\text{M}}(I)$, so that $\tau^*_{\text{M}}(I,\theta)$ is well defined and by \eqref{eq:red_poi_fun_2} and \eqref{eq:our_meln_potential} the reduced Poincar\'{e} function takes the form
\begin{equation}
\mathcal{L}^*_{\text{M}}(I,\theta) = A_{00}+ A_{10}(I)\cos(\theta - I\tau^*_{\text{M}}(I,\theta)) + A_{01}\cos(-\tau^*_{\text{M}}(I,\theta)),\label{eq:red_poi_fun_theta}
\end{equation}
and therefore $\mathcal{S}_{\text{M}}(I,\theta)$ takes the form (\ref{eq:second_definition_SM}).

\paragraph{Example} To illustrate this construction, we fix $\mu = 0.6$.
In this case the crests are horizontal for all $I$, and we display $C_{\M}(I)$ parametrized by $\xi_{\M}$ (see (\ref{eq:cristas_06})) in Fig.\ref{fig:crest_and_SM_mu_06} for $I = 1.2$.
We can see how $R_{\theta}(I)$ intersects transversally $\C_{\text{M}}(I)$, as well as the phase space of scattering map $\mathcal{S}_{\M}$ generated by  this intersection given by the level curves of $\mathcal{L}_{\M}^*(I,\theta)$.

\begin{remark}\label{rem:itinial_points} Recall from Remark \ref{rem:s_parameter} that $s$ does not appear in the expression \eqref{eq:second_definition_SM_int} for $\mathcal{S}(I,\theta)$ and is a parameter in the expression \eqref{eq:first_definition_SM} for $S(I,\varphi,s)$.
Computationally, one difference is that in expression\eqref{eq:first_definition_SM}, once fixed a value of $s$, one throws from any ``initial point'' $(\varphi,s)$ the \NH\, segment $R(I,\varphi,s)$ until it touches the crest $\C(I)$ after a time $\tau^*(I,\varphi,s)$, obtaining a value for $L^*(I,\varphi,s)$ given by \eqref{eq:L^*}, while in expression \eqref{eq:second_definition_SM}, $s$ is fixed equal to $0$ or, equivalently, the initial point to throw the \NH\, segment $R_{\theta}(I)$ is of the form $(\theta,0)$ (see Fig. \ref{fig:variables}).
\end{remark}

\begin{figure}[h]
\centering
\subfigure[Intersection between $R_{\theta}(I)$ and $\C_{\text{M}}(I)$ (in blue) for $\mu =0.6$ and $I=1.2$. ]{\includegraphics[scale=0.27]{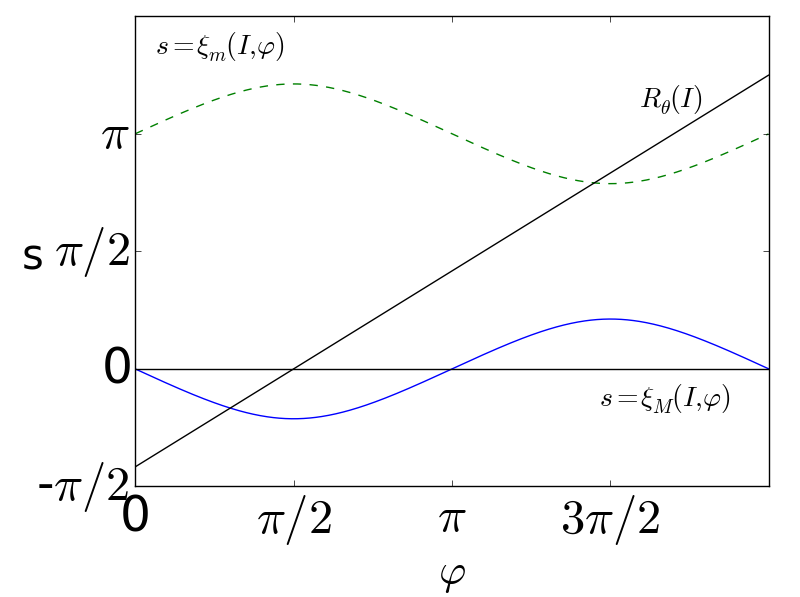}}
\qquad
\subfigure[The level curves of  $\mathcal{L}^*_{\M}(I,\theta)$.]{\includegraphics[scale=0.27]{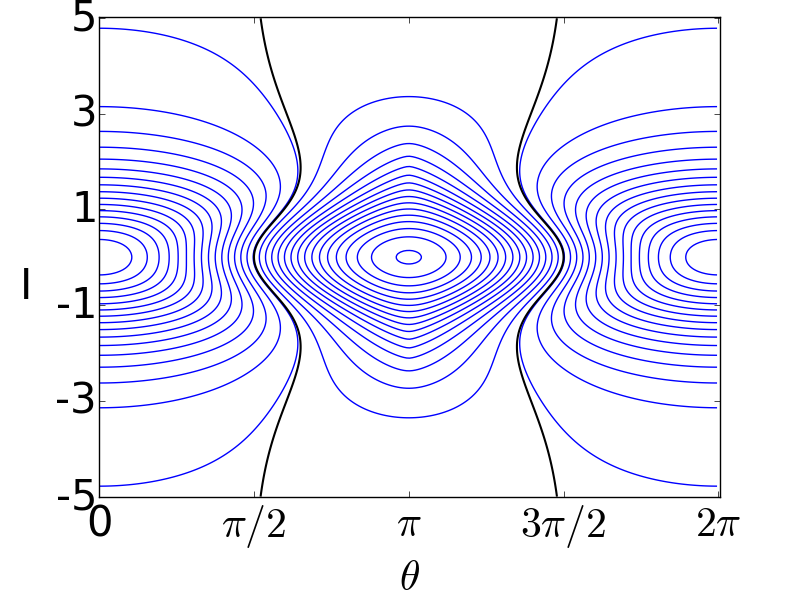}}
\caption{$R_{\theta}(I)\cap \C_{\M}(I)$ and $\mathcal{S}_{\M}(I,\theta)$.}
\label{fig:crest_and_SM_mu_06}
\end{figure}

\subsubsection{Multiple scattering maps: $0.625 \leq \mu \leq 0.97$ \label{sec:mult_scat_map}}

As said before, for $\mu <1/1.6 = 0.625$ and any value of $I$, the two crests  $C_{\M}(I)$ and $C_{\m}(I)$ are horizontal, and the \NH\, segment $R_{\theta}(I)$ intersects transversely each of them, giving rise to a unique scattering map $\mathcal{S}_{\M}$ and $\mathcal{S}_{\m}$ associated to each crest.
We will now explore larger values of $\mu$ to detect tangencies between $C(I)$ and $R_{\theta}(I)$, that is, when there exists $(\varphi,I)$ such that 
$$\frac{\partial \xi}{\partial \varphi}(I,\varphi)= 1/I,$$
where $\xi(I,\varphi)$ is the parameterization \eqref{eq:cristas_06} of the crest.

\paragraph{Tangencies between $\C(I)$ and $R_{\theta}(I)$ and multiple scattering maps\\\\}

We take $\C_{\text{M}}(I)$ parameterized by $\xi_{\M}$ as in \eqref{eq:cristas_06}.
For the other crest $\C_{\m}(I)$ is analogous.
Suppose that there exists a tangency point between $\C_{\text{M}}(I)$ and $R_{\theta}(I)$.
This is equivalent to the existence of $\psi$ such that $\partial\xi_\text{M}/\partial\psi(I,\psi) = 1/I.$
Using (\ref{eq:cristas_06}), this condition is equivalent to
\begin{equation}
-\frac{\mu\alpha(I)\cos\psi}{\sqrt{1-\mu^2\alpha(I)^2\sin^2\psi}} = \frac{1}{I},\label{eq:equa_tangency}
\end{equation}
where $\alpha(I)$ is introduced in \eqref{eq:alpha}.
Therefore 
$$\psi = \pm\arctan\left( \sqrt{\frac{I^2\mu^2\alpha(I)^2-1}{1-\mu^ 2\alpha(I)^2}}\right) + \pi,$$
where the expression under the square root is non-negative for $0.625\leq\mu\leq 0.97$ for some values of $I$ by Preposition \ref{lem:crest}. 
We are considering these values of $I$.

Equation (\ref{eq:equa_tangency}) implies $\cos\psi<0$, say $\psi\in\left(\pi/2,3\pi/2\right)$.
Denote the two tangent points by $\psi_1$ and $\psi_2$ and, without lost of generality, $\psi_1 \leq \psi_2$ with $\psi_1\in\left(\pi/2,\pi\right]$ and $\psi_2 = 2\pi - \psi_1 \in \left[\pi,3\pi/2\right)$. 

We consider the function relating the variables $\theta$ and $\psi$ (see Table \ref{tab:table_variables})
\begin{equation}
\theta (\psi) = \psi - I\xi_{\text{M}}(I,\psi),\label{eq:theta_psi}
\end{equation}
and define
\begin{equation*}
\theta_1 = \psi_1 - I\xi_{\text{M}}(I,\psi_1)\quad\quad \text{ and }\quad\quad \theta_2  =  \psi_2 - I \xi_{\text{M}}(I,\psi_2).
\end{equation*}

This function has only two critical points, $\psi_1$ and $\psi_2$.
Besides, we have 
$$I\frac{\partial \xi_{M}}{\partial \psi}(I,\psi) = -\frac{I\mu\alpha(I)\cos\psi}{\sqrt{1-\mu^2\alpha(I)^2\sin^2\psi}}< 0,\quad \forall \psi \in \left(0,\pi/2\right)\cup\left(3\pi/2,2\pi\right)$$
Therefore, $-I\partial \xi_M/\partial \psi > 0$, thus $d\theta/d\psi = 1 -I\partial \xi_M/\partial \psi >0 \quad \forall \psi \in \left(0,\pi/2\right)\cup\left(3\pi/2,2\pi\right).$

By continuity of $d\theta/d\psi$ and since $\theta(\psi)$ has only two critical points, we have 
\begin{eqnarray*}
\frac{d\theta}{d\psi} &>& 0 \quad  \forall \psi \in \left(0,\psi_1\right)\cup\left(\psi_2,2\pi\right)\\\label{eq:dtheta_dpsi}
\frac{d\theta}{d\psi} &<&0 \quad  \forall \psi \in \left(\psi_1,\psi_2\right).\nonumber
\end{eqnarray*}

Therefore $\theta_1 = \theta(\psi_1)\geq\theta_2 = \theta(\psi_2)$.
Note that $\theta(\left[\psi_2,2\pi\right])=\left[\theta_2,2\pi\right]$.
As $\theta_1 \in \left[\theta_2,2\pi\right]$, there is a $\tilde{\psi}_1 \in \left[\psi_2,2\pi\right]$ such that $\theta(\tilde{\psi}_1) = \theta_1$.
As $d\theta/d\psi$ is positive, $\tilde{\psi}_1$ is unique in that interval.
Analogously, we have $\tilde{\psi}_2 \in (0,\psi_1)$ such that $\theta(\tilde{\psi}_2)=\theta_2$.
We have $\tilde{\psi}_2\leq\psi_1\leq\psi_2\leq\tilde{\psi}_1$.
We can build, at least, three bijective functions:
\begin{eqnarray}
\theta_A :&  D_A:=\left[0,\tilde{\psi}_2\right]\cup\left(\psi_2,2\pi\right] &\longrightarrow \left[ 0 , 2\pi\right] \label{eq:three_bijections}\\
\theta_B: & D_B := \left[0,\psi_1\right)\cup\left[\tilde{\psi}_1,2\pi\right] &\longrightarrow \left[0,2\pi\right]\nonumber\\
\theta_C:& D_C:=\left[0,\tilde{\psi}_2\right]\cup\left(\psi_1,\psi_2\right)\cup\left[\tilde{\psi}_1,2\pi\right] &\longrightarrow \left[0,2\pi\right]\nonumber
\end{eqnarray}

If $\psi_1 < \psi_2$, that is, the tangency point is different from $\psi = \pi$, we have, at least, three scattering maps associated to $C_{\text{M}}$, the scattering map associated to $\mathcal{L}^*(I,\theta_j)$, $j = A,B,C$.

\begin{remark}
Those three scattering maps appear because the \NH\, line $R_{\theta}(I)$ intersects $C_{\text{M}}(I)$ three times for $\theta$ in the interval $(\theta_1,\theta_2)$.
\end{remark}

\begin{definition}
We call \emph{tangency locus} the set
\begin{equation*}
\left\{(I,\theta(\psi)):\frac{\partial \xi}{\partial \psi}(I,\psi)\,=\,\frac{1}{I} \text{ and }I\in\left[-I^*,I^*\right]\right\}.
\end{equation*}
\end{definition}

Fixed $I$ such that there exist tangencies, as we have seen before, there exist $\theta_1 \leq\theta_2$ such that $(I,\theta_1),(I,\theta_2)$ belong to the tangency locus.
We have that for any $\theta \notin \left(\theta_1,\theta_2\right)$ there is only one scattering map. 
But we have three different scattering maps for $\theta \in \left(\theta_1,\theta_2\right)$.
We can see this behaviour on the example below.

\paragraph{Example}
We illustrate the scattering maps of $C_{\text{M}}(I)$ for $\mu = 0.9$ in Fig. \ref{fig:multi_scat_maps}.
We can see the three scattering maps and we emphasize their difference showing a zoom around the tangency locus.
In this zoom, we can see curves with three different colors.
Each color represents a different scattering map.

\begin{figure}[h]
\centering
\subfigure[The three intersections between $R_{\theta}(I)$ and $C_{\text{M}}(I)$ for $\mu =0.9$ , $I=1.5$.]{\includegraphics[scale=0.27]{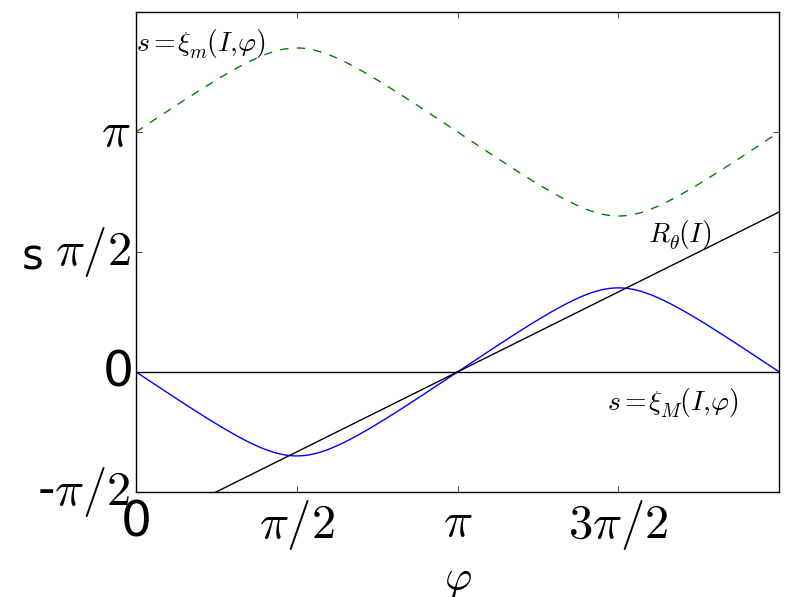}}
\qquad
\subfigure[Level curves of $\mathcal{L}_{\M}^*(I,\theta)$.]{\includegraphics[scale=0.27]{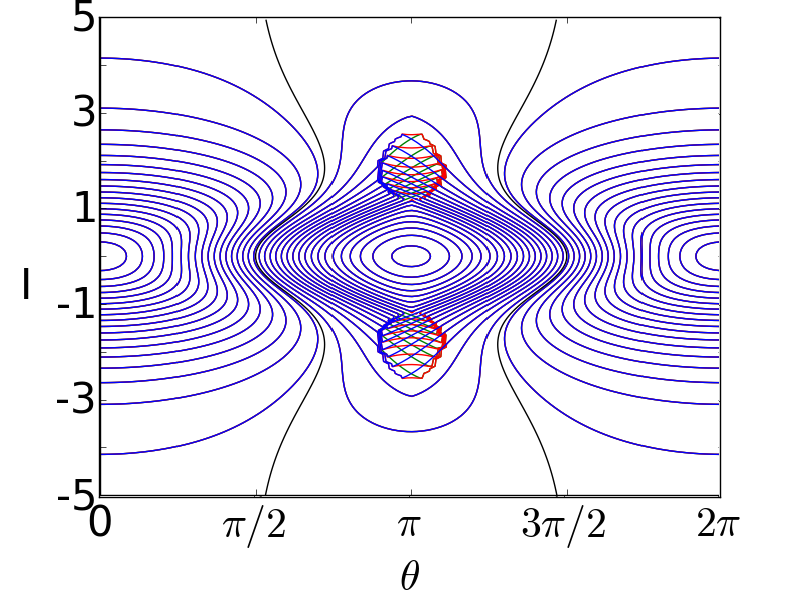}}
\subfigure[Zoom around the tangency locus]{\includegraphics[scale=0.27]{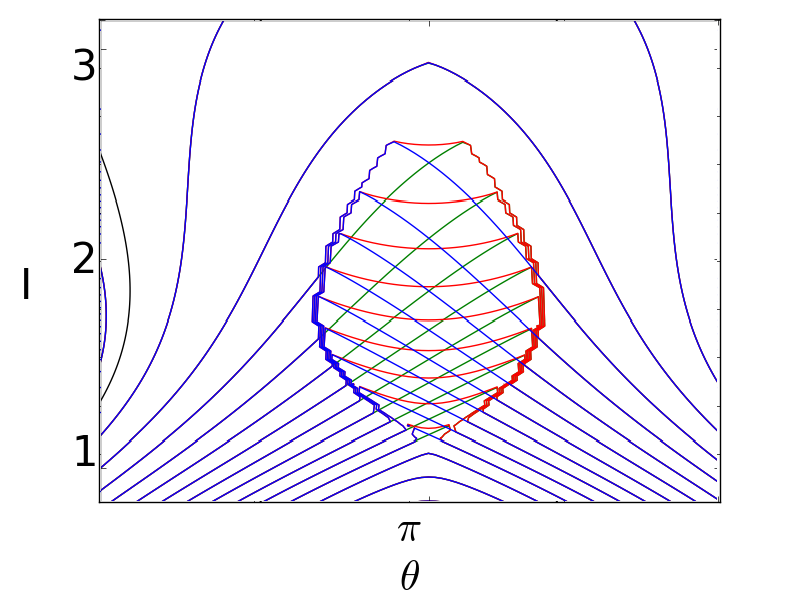}}
\caption{Tangencies: Multiple scattering maps.}
\label{fig:multi_scat_maps}
\end{figure}

\subsubsection{The scattering maps ``with holes'': $\mu> 0.97$}\label{sec:holes}

We study now the case when $\mu$ is large enough such that $\mu\alpha(I)>1$ for some $I$, that is, for $\mu > 0.97$.
In this case, the horizontal crests become vertical crests for some values of $I$.
But locally, the structure of the parameterizations  $\xi_{\text{M}}$ and $\xi_{\text{m}}$ are preserved, that is, even if the crests are vertical from a global view-point, these crests are formed by pieces of horizontal crests. 
So, some intersections between $R_{\theta}(I)$ and $C(I)$ parameterized by the vertical parameterization $\eta$, given in (\ref{eq:eta_definition}), can be seen, indeed, as intersections between $R_{\theta}(I)$ and $C(I)$ parameterized by $\xi$, given in (\ref{eq:cristas_06}).
Using this idea, we can be extend the scattering map associated to the reduced Poincar\'{e} function, given in \eqref{eq:red_poi_fun_psi}, for the values of $(I,\varphi)$ such that $\mu\alpha(I)>1$ but $\left|\mu\alpha(I)\sin\varphi\right|<1$.
For some values of $\varphi$ like $\varphi = \pi/2,3\pi/2$, this is not possible, and for those values of $\varphi$ ``holes'' appear in the definition of the scattering map when the horizontal parameterization $\xi$ is used.
\begin{remark}
For the diffusion, a priori, the existence of such values can be a problem.
One can avoid these holes using the inner map, or using another scattering map associated to the vertical parameterization $\eta$ given in \eqref{eq:eta_definition}.
\end{remark}

\paragraph{Example}
We illustrate this case in Fig. \ref{fig:scat_with_holes}.
We display in (a) an example of intersection between $R_{\theta}(I)$ and $C_{\M}(I)$ and in (b) the level curves of $\mathcal{L}_{\M}^*(I,\theta)$ (recall that they provide an approximation to the orbits of the scattering map $\mathcal{S}_{\M}(I,\theta) )$. 
The green region in (b) is the region where the scattering map is not defined, that is, for a point $(I,\theta)$ in this region, $R_{\theta}(I)$ does not intersect $C_{\M}(I)$.
\begin{figure}[h]
\centering
\subfigure[Intersection between $C_{\M}(I)$ and $R_{\theta}(I)$. $\mu = 1.5$ and $I = 1$.]{\includegraphics[scale=0.27]{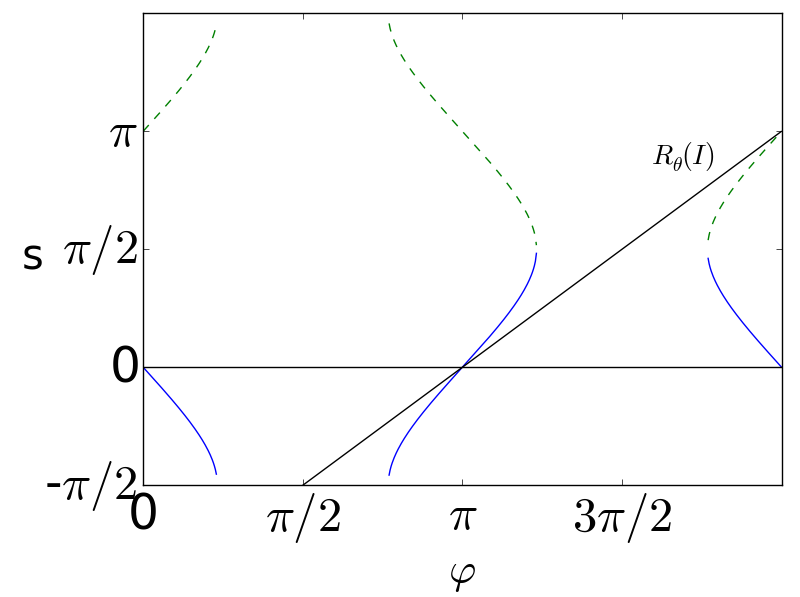}}
\qquad
\subfigure[Level curves of $\mathcal{L}_{\M}^*(I,\theta)$. In green, the region which the level curves are not defined.]{\includegraphics[scale=0.27]{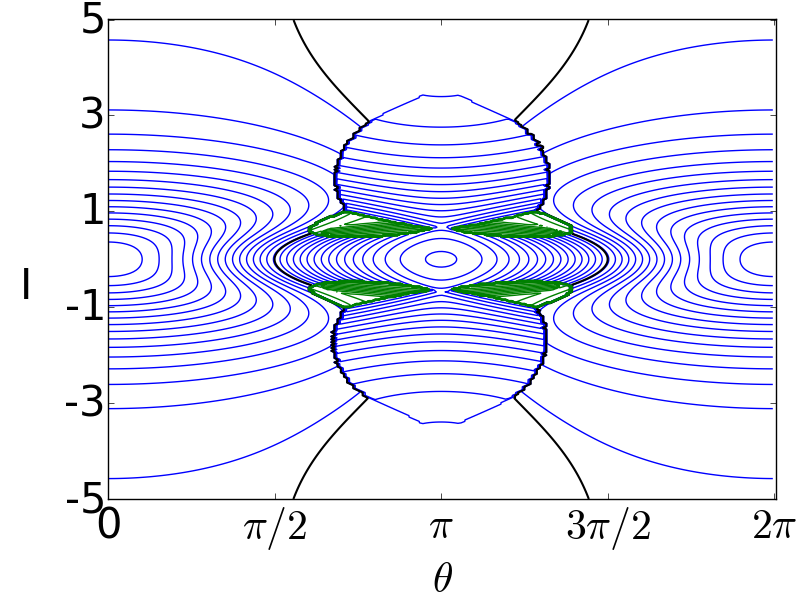}}
\caption{Scattering map with holes}\label{fig:scat_with_holes}
\end{figure}
\subsubsection{Summary of the scattering maps}
Taking into account the results of the last three sub-subsections \ref{sub:3.2.4}--\ref{sec:holes} on the primary scattering map $\mathcal{S}_{\M}$, $\mathcal{S}_{\m}$ for $\mu>0$ as well as Lemma \ref{lem:geometrical_lemmas}  we can complete Proposition \ref{lem:crest}. 
\begin{theorem}\label{new_theorem}
Consider the crests $C(I)$ defined in \eqref{eq:def_crista} and the \NH\, lines $R(I,\theta)$ defined in \eqref{eq:line_segments_I_theta}
\begin{itemize}
\item For  $0<\left|\mu\right|< 0.625$ the two crests are horizontal and the intersection between any crest and any \NH\, lines is transversal.
There exist two primary scattering maps $\mathcal{S}(I,\theta)$ defined on the whole range of $\theta\in\mathbb{T}$.
\item For $0.625 \leq \left|\mu\right| \leq 0.97$ the two crests are still horizontal, but for some values of $I$ there exist two \NH\, lines $R_{\theta_1}(I)$, $R_{\theta_2}(I)$ which are geometrically tangent to the crests.
There exist two or six scattering maps defined for $\theta\neq\theta_1,\,\theta_2$.
\item For $\left|\mu\right| >0.97$, the same properties stated in b) hold, except that for some bounded interval of $\left|I\right|$ there exists a sub-interval of $\theta\in\mathbb{T}$ such that the scattering maps are not defined.
\end{itemize}
\end{theorem}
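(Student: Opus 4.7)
The strategy is to derive Theorem \ref{new_theorem} by assembling Proposition \ref{lem:crest} together with the case-by-case analyses of sub-subsections \ref{sub:3.2.4}--\ref{sec:holes}, using Lemma \ref{lem:geometrical_lemmas}(b) to pass from $\mu<0$ to $\mu>0$. That lemma identifies the scattering map for $\mu$ associated to $C_{\m}$ with the one for $-\mu$ associated to $C_{\M}$, so the whole discussion can be carried out with $\mu>0$ without loss of generality.

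Before splitting into cases I would pin down the two numerical thresholds. The function $\alpha(I)=\sinh(\pi/2)\,I^2/\sinh(\pi I/2)$ is smooth, even, positive and decays to $0$ at infinity; a direct one-variable analysis (see Fig. \ref{fig:module_alpha}) yields $\max_{I\in\mathbb{R}}\alpha(I)\approx 1/0.97$ and $\max_{I\in\mathbb{R}} I\,\alpha(I)\approx 1/0.625$. Consequently $|\mu|=0.97$ is the smallest value for which $|\mu\alpha(I)|$ reaches $1$ (and thus governs horizontality versus verticality of the crests through \eqref{eq:cristas_06}), while $|\mu|=0.625$ is the smallest value for which the tangency equation \eqref{eq:equa_tangency}, equivalent to $|\mu I\alpha(I)|\geq 1$, admits real solutions.

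For part (a), when $0<|\mu|<0.625$ both inequalities $|\mu\alpha(I)|<1$ and $|\mu I\alpha(I)|<1$ hold for every $I$. Therefore each crest is horizontal (parameterized by \eqref{eq:cristas_06}) and the derivative $d\theta/d\psi=1-I\,\partial\xi/\partial\psi$ along \eqref{eq:theta_psi} is strictly positive, so $\psi\mapsto\theta(\psi)$ is a diffeomorphism $\mathbb{T}\to\mathbb{T}$ on each crest. Inverting yields $\tau^*_{\M}(I,\theta)$ and $\tau^*_{\m}(I,\theta)$ globally on $\mathbb{T}$, giving the two primary scattering maps. For part (b), in $0.625\leq|\mu|\leq 0.97$ horizontality is preserved, but for $I$ in a bounded set the tangency equation has two roots $\psi_1,\psi_2$; the three branches $\theta_A,\theta_B,\theta_C$ of \eqref{eq:three_bijections} constructed in sub-subsection \ref{sec:mult_scat_map} furnish three scattering maps per crest, six in total, defined away from $\theta=\theta_1,\theta_2$ where $d\theta/d\psi$ vanishes; for $I$ outside the tangency region only the two primary maps of (a) survive. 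For part (c), once $|\mu|>0.97$ there is a bounded $I$-interval on which $|\mu\alpha(I)|>1$, so by Proposition \ref{lem:crest}(c) the crests become vertical there. For $\theta$ in a corresponding sub-interval of $\mathbb{T}$ the \NH\, segment $R_\theta(I)$ misses the horizontally parameterized crest, producing the holes displayed in Fig. \ref{fig:scat_with_holes}; outside the holes the count of (a) and (b) applies.

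The main obstacle is the rigorous certification of the two constants $0.625$ and $0.97$. Both come from critical points of one-variable transcendental functions involving $\sinh$, so the critical equations must be solved numerically with controlled error bounds to be sure the stated sharp inequalities hold. A secondary care is needed in (b) to verify that away from $\psi_1,\psi_2$ the sign of $d\theta/d\psi$ really does split $\mathbb{T}$ into the three intervals underlying the bijections $\theta_A,\theta_B,\theta_C$ of \eqref{eq:three_bijections}, so that no further critical points are missed in the counting.
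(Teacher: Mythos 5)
Your proposal mirrors the paper's proof exactly: the paper simply cites the case analyses of sub-subsections~\ref{sub:3.2.4}--\ref{sec:holes} together with Lemma~\ref{lem:geometrical_lemmas} as completing Proposition~\ref{lem:crest}, which is the same assembly you carry out (your identification of the thresholds $0.625 = 1/\max_I \beta(I)$ and $0.97 = 1/\max_I\alpha(I)$, the reduction to $\mu>0$ via Lemma~\ref{lem:geometrical_lemmas}(b), and the count of three branches $\theta_A,\theta_B,\theta_C$ per crest all appear in those subsections). The only difference is that you spell out the bookkeeping the paper leaves implicit, which is fine.
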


\section{Arnold diffusion}\label{sec:Arnold_diffusion}

From now on, our goal will be the study of Arnold diffusion using adequately chosen scattering maps.
For this diffusion, it will be important to describe the level curves of the reduced Poncar\'{e} function $\mathcal{L}^*(I,\theta)$, since the scattering map is to up an error $\mathcal{O}(\varepsilon^2)$, the $-\varepsilon$ time flow of the Hamiltonian $\mathcal{L}^*(I,\theta)$.
Among the level curves of $\mathcal{L}^*(I,\theta)$, we will first describe two candidates to fast diffusion, namely the ones of equation $\mathcal{L}^*(I,\theta) = A_{00} + A_{01}$, that will be called ``highways''.
Indeed, such highways will be taken into account in the two theorems about the existence of diffusion that will be proven in this section.

In the next proposition we prove that $\mathcal{L}^*(I,\theta) = A_{00} + A_{01}$ is a union of two ``vertical'' curves in a rectangle $\mathbb{T}\times B$, that is, it can be written as $\text{H}_\text{l}\cup \text{H}_\text{r}$ where $\text{H}_k =  \left\{(I,\theta_k(I)):I\in B\right\}$, $\theta_k(I)$ is a smooth function, and the index $k$ takes the value l for left ($0<\theta_\text{l}(I)<\pi$) o r for right ($\pi<\theta_\text{r}(I)<2\pi$). 
To prove this, we only need to prove that 
$$\frac{\partial \mathcal{L}_{\M}^*}{\partial \theta}(I,\theta) \neq 0 \quad \forall (I,\theta) \in\left\{(I,\theta):\mathcal{L}_{\M}^*(I,\theta) = A_{00}+A_{01} \text{ and }  I\in B\right\}.$$

\subsection{A geometrical proposition: The level curves of $\mathcal{L}^*(I,\theta)$}

\begin{proposition}\label{pro:geometrical_proposition}
Assuming $a_{10}\,a_{01}\,\neq\,0,$ the level curve $\mathcal{L}^*(I,\theta) = A_{00}+A_{01}$ of the reduced Poincar\'{e} function \eqref{eq:red_poi_func_1} is a union of two ``vertical'' curves on a cylinder $(\theta,I)\in\mathbb{T}\times B$, where the set $B$ is given by
\begin{itemize}
\item for $\left|\mu\right|<0.625$, $B$ is the real line.
\item for $0.625\leq\left|\mu\right|$, $B = \left(-\infty,-I_{++}\right)\cup\left(-I_+,I_+\right)\cup\left(I_{++},+\infty\right),$ where
$$I_{++} = \max\left\{I> 0:\frac{I^3\sinh(\pi/2)}{\sinh(I\pi/2)} = \frac{1}{\left|\mu\right|}\right\}$$
and 
\begin{itemize}
\item $I_{+} = \min\left\{I > 0:I^3\sinh(\pi/2)/\sinh(I\pi/2) = 1/\left|\mu\right|\right\},\,\text{for } \left|\mu\right|\leq 1$
\item $I_{+} = \min\left\{I > 0:I^2\sinh(\pi/2)/\sinh(I\pi/2) = 1/\left|\mu\right|\right\},\, \text{for }\left|\mu\right|\geq 1$
\end{itemize}
\end{itemize}
\end{proposition}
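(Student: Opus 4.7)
The plan is to follow the strategy indicated in the paper: establish that $\partial \mathcal{L}^{*}_{\M}/\partial\theta\neq 0$ on the level set $\{\mathcal{L}^{*}_{\M}=A_{00}+A_{01}\}$ for $I\in B$, and then verify that exactly two branches arise. First, using \eqref{eq:red_poi_fun_2} together with the fact that $\tau^{*}(I,\theta)$ is critical for the map $\tau\mapsto\mathcal{L}(I,\theta-I\tau,-\tau)$, I would apply the envelope identity (the terms carrying $\partial_{\theta}\tau^{*}$ cancel because they are multiplied by $I\,\partial_{\varphi}\mathcal{L}+\partial_{s}\mathcal{L}$, which vanishes at $\tau=\tau^{*}$) to obtain
\[
\frac{\partial \mathcal{L}^{*}_{\M}}{\partial \theta}(I,\theta)=\frac{\partial \mathcal{L}}{\partial \varphi}(I,\psi,\xi_{\M}(I,\psi))=-A_{10}(I)\sin\psi,\qquad \psi=\theta-I\tau^{*}_{\M}(I,\theta).
\]
Since $A_{10}(I)\neq 0$ for all $I\in\mathbb{R}$ (with the removable limit $A_{10}(0)=4a_{10}$), the non-vanishing reduces to $\sin\psi\neq 0$. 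Using $\xi_{\M}(I,0)=\xi_{\M}(I,\pi)=0$, direct evaluation yields $\mathfrak{L}^{*}_{\M}(I,0)=A_{00}+A_{01}+A_{10}(I)$ and $\mathfrak{L}^{*}_{\M}(I,\pi)=A_{00}+A_{01}-A_{10}(I)$, both different from $A_{00}+A_{01}$, so that $\psi\notin\{0,\pi\}$ on the level set. The implicit function theorem then produces a smooth local graph $\theta=\theta(I)$ at every point of the level set.

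To count the branches, I would analyze the auxiliary function $g(\psi):=\mathfrak{L}^{*}_{\M}(I,\psi)-A_{00}-A_{01}=A_{10}(I)\cos\psi+A_{01}\bigl(\sqrt{1-\mu^{2}\alpha(I)^{2}\sin^{2}\psi}-1\bigr)$, which is invariant under $\psi\mapsto 2\pi-\psi$ and satisfies $g(0)=A_{10}(I)>0>g(\pi)=-A_{10}(I)$. The intermediate value theorem supplies a zero $\psi_{\text{l}}\in(0,\pi)$ and, by symmetry, its mirror $\psi_{\text{r}}=2\pi-\psi_{\text{l}}\in(\pi,2\pi)$. A direct calculation gives $g'(\psi)=-\sin\psi\,[A_{10}(I)+A_{01}\mu^{2}\alpha(I)^{2}\cos\psi/\cos\xi_{\M}]$, and using the identity $A_{10}(I)/A_{01}=\mu\alpha(I)/I$ one checks that the bracket vanishes exactly when $\partial\xi_{\M}/\partial\psi(I,\psi)=1/I$, that is, at a tangency between $R_{\theta}(I)$ and $\C_{\M}(I)$. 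Therefore, away from tangencies the bracket has constant sign on $(0,\pi)$, $g$ is strictly monotone there, $\psi_{\text{l}}$ is unique, and the two branches $\theta_{\text{l}}(I)=\psi_{\text{l}}-I\xi_{\M}(I,\psi_{\text{l}})$, $\theta_{\text{r}}(I)=\psi_{\text{r}}-I\xi_{\M}(I,\psi_{\text{r}})$ are smoothly defined.

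The characterization of $B$ then follows from Proposition \ref{lem:crest}. For $|\mu|<0.625$, no tangency exists for any $I$, so $B=\mathbb{R}$. For $|\mu|\geq 0.625$, the tangency condition is $|\mu\beta(I)|\geq 1$ with $\beta(I):=I\alpha(I)=I^{3}\sinh(\pi/2)/\sinh(\pi I/2)$; the set of such $I$ is the closed interval $[I_{+},I_{++}]$ whose endpoints are the two positive roots of $|\mu\beta(I)|=1$ in the horizontal-crest regime $|\mu|\leq 1$. When $|\mu|>1$, the crest turns vertical before a tangency appears, at the root of $\mu\alpha(I)=1$, which then replaces $I_{+}$ as the lower obstruction, while $I_{++}$ keeps its meaning as the last tangency of the horizontal regime. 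Symmetry in $I$ from Lemma \ref{lem:geometrical_lemmas}(a) reduces the analysis to $I\geq 0$. On $[I_{+},I_{++}]$, either an algebraic root of the quadratic $\mu\alpha(I)(I^{2}-1)\cos^{2}\psi+2I\cos\psi-\mu\alpha(I)I^{2}=0$ obtained from $\mathfrak{L}^{*}_{\M}=A_{00}+A_{01}$ by squaring leaves $[-1,1]$, or additional zeros of $g$ appear in $(0,\pi)$, and in either case the level set ceases to be the union of exactly two graphs $\theta=\theta(I)$, as required.

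The main obstacle will be the regime $|\mu|>1$, where the horizontal parametrization $\xi_{\M}$ must be patched with the vertical one $\eta_{\M}$ of \eqref{eq:eta_definition} across the transition $\mu\alpha(I)=1$. One must check that the two branches extend continuously across this transition outside $[I_{+},I_{++}]$ while admitting no extension across this interval; careful book-keeping of which algebraic roots of the quadratic survive the squaring condition $1-(\mu\alpha(I)/I)\cos\psi\geq 0$ is the concrete tool.
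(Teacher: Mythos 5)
Your proposal is correct and follows essentially the same route as the paper's proof: the envelope identity giving $\partial\mathcal{L}^{*}_{\M}/\partial\theta=-A_{10}(I)\sin\psi$, the observation that $\psi\notin\{0,\pi\}$ on the level set (hence verticality), the change of variables $\psi\leftrightarrow\theta$ valid precisely when no tangency exists, and the characterization of $B$ through the functions $\alpha$ and $\beta=I\alpha$ (with $\alpha=1/|\mu|$ replacing $\beta=1/|\mu|$ as the binding constraint for $I_{+}$ when $|\mu|>1$). Your argument via the sign of $g'$ is a welcome elaboration of the paper's terse ``it is immediate to see'' claim, and the identity $B(\psi)=0\iff\partial\xi_{\M}/\partial\psi=1/I$ that you derive is exactly the right link between monotonicity of $g$ and the tangency locus.

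Two remarks on the final paragraph, which is tangential to what the proposition requires. First, the alternative ``or additional zeros of $g$ appear in $(0,\pi)$'' does not actually occur on $[I_{+},I_{++}]$: substituting $u=\cos\psi$ one sees that $u\mapsto u/\sqrt{(1-\mu^{2}\alpha^{2})+\mu^{2}\alpha^{2}u^{2}}$ is strictly increasing on $[-1,1]$, so the bracket $B(\psi)$ has at most one zero on $(0,\pi)$; consequently $g$ has at most one interior local extremum, which is a minimum lying below $g(\pi)<0$, and the zero of $g$ in $(0,\pi)$ remains unique for all $I\in A$ (not just $I\in B$). What degenerates on $[I_{+},I_{++}]$ is the bijection $\psi\leftrightarrow\theta$, i.e.\ $\mathcal{L}^{*}$ becomes multi-valued in $\theta$, not the zero set in $\psi$. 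Second, the ``main obstacle'' you anticipate for $|\mu|>1$ --- patching the horizontal parametrization $\xi_{\M}$ with the vertical one $\eta_{\M}$ --- is not actually needed: by construction $B\subset A$, and on $A$ the crest is always horizontal, so $\xi_{\M}$ suffices throughout $\mathbb{T}\times B$. Neither of these issues affects the validity of your proof of the stated proposition.
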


\begin{proof}
Consider the real set $A$:
\begin{equation}
A = \left\{I\geq 0 : \left|\alpha(I)\right|\leq \frac{1}{\left|\mu\right|}\right\}.\label{eq:def_of_the_set_A}
\end{equation}
For $I\in A$, the maximum crest $\C_{\M}(I)$ is horizontal and can be parameterized by the expression (\ref{eq:cristas_06}) and $\xi_{\text{M}}(I,0)=\xi_{\M}(I,\pi)=\xi_{\M}(I,2\pi) = 0$. 

Consider now the subset of $A$
\begin{equation}
B = \{I \in A : \text{there is no tangency between $\text{C}_{\M}(I)$ and $R_{\theta}(I)$}\}.\label{eq:def_of_the_set_B}
\end{equation}
As already mentioned, for $I\in B$ one has $\partial \xi_{\M}/\partial \psi(I,\psi)\neq 1/I,\quad\forall \psi\,\in\,\left[0,2\pi\right]$.
In particular, for $I\in B$ the change \eqref{eq:def_theta_psi} $\psi\in\mathbb{T}\mapsto \theta = \psi - I\xi(I,\psi)\in\mathbb{T}$ is smooth with inverse
\begin{equation}\label{def:psi}
\psi = \theta - I\tau_{\M}^*(I,\theta)\qquad \forall \theta\in\mathbb{T}.
\end{equation}
Then we can rewrite for $I\in B$ and $\theta\in\mathbb{T}$ the reduced Poincar\'{e} function $\mathcal{L}_{\M}^*(I,\theta)$ of \eqref{eq:red_poi_fun_theta} in terms of this variable $\psi$ as 
$$\mathfrak{L}_{\M}^*(I,\psi) = A_{00} + A_{10}(I)\cos\psi + A_{01}\cos\xi_{\M}(I,\psi).$$  

Notice that $\mathfrak{L}_{\M}^*(I,\psi)$ is well defined for all $(I,\psi)\in A\times\mathbb{T}$ and it is immediate to see that for any $I\in A$ there exists exactly one $\psi_0\in(0,\pi)$ and another one $\psi_1\in(\pi,2\pi)$ such that $\mathfrak{L}_{\M}^*(I,\psi_0) = \mathfrak{L}_{\M}^*(I,\psi_1)= A_{00}+A_{01}$.
Restricting now to $I\in B$, the same property holds for $\mathcal{L}^*(I,\theta)$, since the relation between $\theta$ and $\psi$ is a change of variables sending $\theta = 0,\,\pi$ to $\psi = 0,\,\pi$ respectively.
In other words, introducing the projection $\Pi:\mathbb{R}\times\mathbb{T}\rightarrow\mathbb{R}$, $\Pi(I,\theta) = I$, $B\subset\Pi\left(\mathcal{L}^{*-1}_{\M}(A_{00}+A_{01})\right)$.

We can characterize $B$ defined in \eqref{eq:def_of_the_set_B} by the following property
\begin{equation}\label{eq:def_of_beta}
I\in B \Leftrightarrow \beta(I):=I\alpha(I)<\frac{1}{\left|\mu\right|}.
\end{equation}
Indeed, by definition \eqref{eq:def_of_the_set_A}, $A$ is characterized by $I\in A \Leftrightarrow \alpha(I)\leq 1/\left|\mu\right|$, where $\alpha(I)\geq 0 $ is defined in \eqref{eq:alpha}, and it satisfies $\lim_{I\rightarrow 0^+}\alpha(I) = 0 = \lim_{I\rightarrow +\infty}\alpha(I)$ and it has a unique positive critical point $I_{\alpha} \approx 1.219$ which is a global maximum, see Fig.\ref{fig:alpha_beta}.
Therefore
\begin{equation} 
\alpha(I)\leq\alpha(I_{\alpha}) = \frac{1}{0.97}\sim 1.03.\label{eq:upper_bound_alpha}
\end{equation}
On the other hand, for $I\in A$ there exist tangencies between $\C_{\M}(I)$ and $R_{\theta}(I)$ as long as the condition \eqref{eq:equa_tangency} holds, which can only take place for $\left|I\alpha(I)\right|\geq1/\mu$, which justifies the characterization \eqref{eq:def_of_beta} for $B$.

The function $\beta(I)$ is very similar to  $\alpha(I)$, that is, $\beta(I)$ is always positive for $I>0$ , it has a unique positive critical point $I_{\beta} = 1.9$ and $\beta(I)\rightarrow 0$ as $I\rightarrow 0$ and $I\rightarrow +\infty$.
This positive critical point is a global maximum point,
\begin{equation} 
\beta(I) \leq\beta(I_{\beta})=1.6.\label{eq:upper_bound_beta}
\end{equation}
Besides, by (\ref{eq:def_of_beta}), for $I<1$, $\beta(I)<\alpha(I)$, $\beta(1)=\alpha(1) = 1$ and for $I>1$, $\beta(I)>\alpha(I)$.
See Fig. \ref{fig:alpha_beta}.

\begin{figure}
\centering
\includegraphics[scale=0.2]{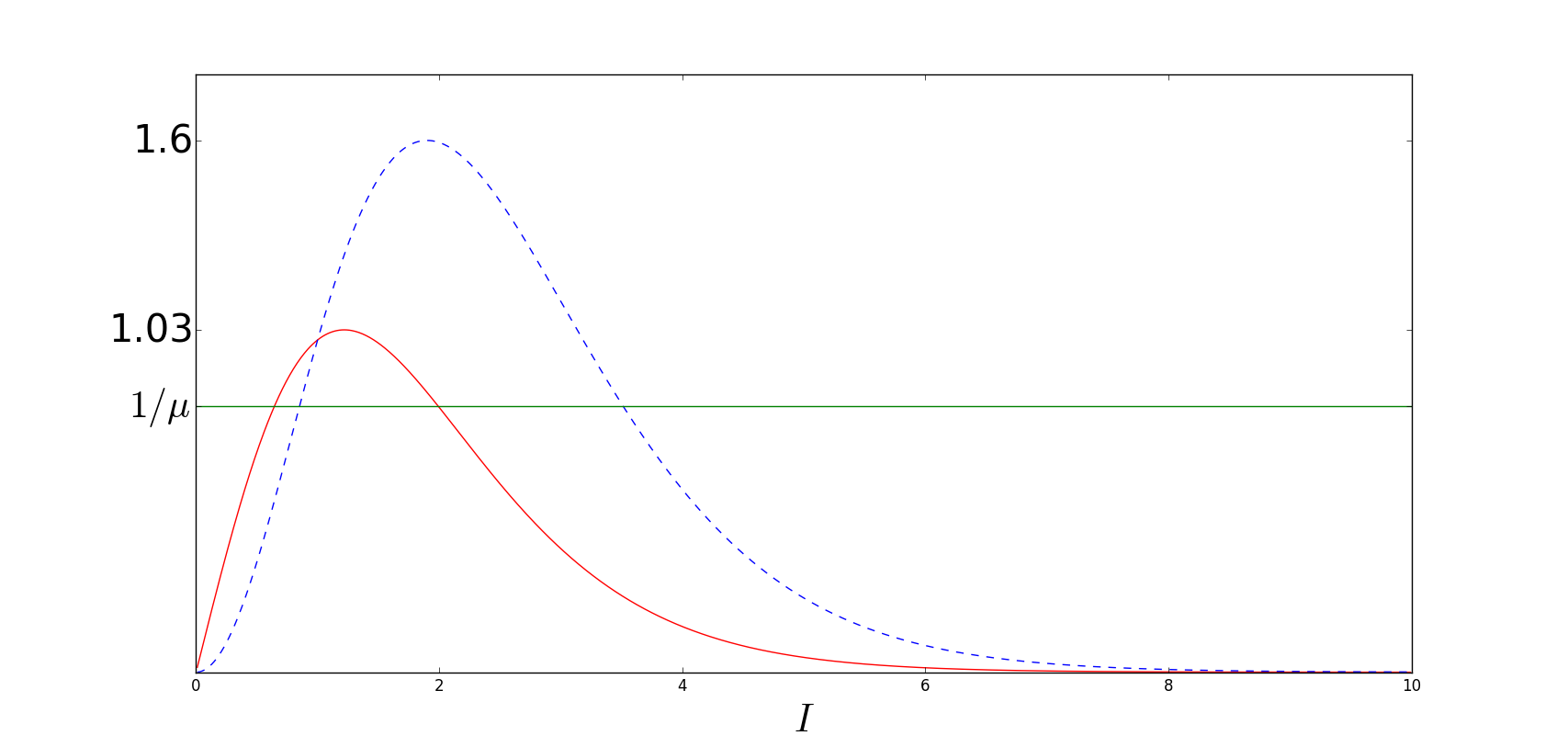}
\caption{Graph of $\alpha(I)$, in red, and $\beta(I)$, in blue (dashed).}\label{fig:alpha_beta}
\end{figure}

Now we consider the three case of the proposition, that is, \textbf{1)} $\left|\mu\right| < 0.625$, \textbf{2)} $ 0.625\leq \left|\mu\right| \leq 1 $ and \textbf{3)} $\left|\mu\right| \geq 1$.

\begin{itemize}
\item{\textbf{Case 1}}
$\left|\mu\right| < 0.625$, that is, $1/\left|\mu\right|  > 1.6$.
Then, by (\ref{eq:upper_bound_alpha}) and (\ref{eq:upper_bound_beta}),
$$\alpha(I)\leq 1.03 <\frac{1}{\mu} \quad\text{ and }\quad\beta(I)\leq 1.6<\frac{1}{\left|\mu\right| },$$
for all $I$, that is, for $I>0$, $B = \left[0,+\infty\right).$

\item{\textbf{Case 2}}  Note that for $ 0.625\leq \left|\mu\right| <0.97$ 
$$\alpha(I)\leq 1.03 = 1/0.97<\frac{1}{\left|\mu\right|}\leq 1.6 = \beta(I_{\beta}),$$
and by (\ref{eq:upper_bound_alpha}), $A = \left[0,+\infty\right)$.
But, now $\beta(I_{b})\geq 1/\left|\mu\right|$.
Then there exist two values $I \in A$ such that $\beta(I) =1/\left|\mu\right|$.
Define
\begin{equation}
I_{+} = \min\left\{I:\beta(I) = 1/\left|\mu\right|\right\}\quad\quad\text{and}\quad\quad I_{++} = \max\left\{I:\beta(I) = 1/\left|\mu\right|\right\}.\label{I_+_1_and_I_++_1}
\end{equation}
By the characterization \eqref{eq:def_of_beta} of the set $B$ we have $B=\left[0,I_{+}\right)\cup\left(I_{++},+\infty\right)$.

For $0.97 \leq \left|\mu\right|\leq 1$, there exist $I_{a}<I_{\bar{a}}$ such that $\alpha(I_{j}) = 1/\left|\mu\right|$, $j\in \{a,\bar{a}\}$ and $A = \left[0,I_a\right)\cup\left(I_{\bar{a}},+\infty\right)$. Analogously, there exist $I_{b} < I_{\bar{b}}$ such that  $\beta(I_{j}) = 1/\left|\mu\right|$, $j\in \{b,\bar{b}\}$.
As $I_{b}\leq I_{a}$ and $I_{\bar{a}}<I_{\bar{b}}$, we have $B = \left[0,I_{b}\right)\cup\left(I_{\bar{b}},+\infty\right)$, see Fig. \ref{fig:alpha_beta}. 
But this the equivalent to $B=\left[0,I_{+}\right)\cup\left(I_{++},+\infty\right)$, where $I_{+}$ and $I_{++}$ are given by \eqref{I_+_1_and_I_++_1}.

\item{\textbf{Case 3}} This case is similar to the \textbf{Case 2} for $0.97 \leq \left|\mu\right|\leq 1$.
But now, as $\left|\mu\right| \geq 1$, we have $I_{a}\leq I_{b}$.
So, in this case we have $B =\left[0,I_{a}\right)\cup\left(I_{\bar{b}},\infty\right)$, or $B = \left[0,I_+\right)\cup\left(I_{++},+\infty\right),$
where $I_{+} = \min\left\{I:\alpha(I)= 1/\left|\mu\right|\right\}$ and $I_{++} = \max\left\{I:\beta = 1/\left|\mu\right|\right\}$.
\end{itemize}

Finally, we see that $\mathcal{L}^*_{\M}(I,\theta) = A_{00} + A_{01}$ is composed by two curves in rectangles $(\theta,I)\in \left((0,\pi)\cup(\pi,2\pi)\right)\times B$.
This is equivalent to prove that the derivative of this curve with respect to the variable $\theta$ is different from $0$ for all $I$ in $B$.
For any $I\in B$, we compute the expression for $\partial \mathcal{L}^*_{\M}/\partial \theta(I,\theta)$ which using \eqref{eq:def_of_tau} and the change of variables \eqref{def:psi} takes the form
\begin{equation}
\frac{\partial \mathcal{L}^*_{\M}}{\partial \theta}(I,\theta) = -A_{10}(I)\sin(\psi),\label{eq:der_red_poi_fun_theta}
\end{equation}
and never vanishes for $\psi\in(0,\pi)\cup(\pi,2\pi)$, or equivalently, for $\theta\in(0,\pi)\cup(\pi,2\pi)$.
Then $\mathcal{L}^*_{\M}(I,\theta) = A_{00} + A_{01}$ is composed by two vertical curves on $B$.

As we have seen in  Lemma \ref{lem:geometrical_lemmas}, $\mathcal{L}^*(-I,\theta) = \mathcal{L}^*(I,\theta)$.
Then, the level curve $\mathcal{L}^*_{\M}(I,\theta) = A_{00}+A_{01}$  is also defined for $I<0$, which concludes the proof.
\end{proof}

\begin{remark}
Using the expressions above for $I_+$ and $I_{++}$ one can check that
$$I_+ \sim \frac{\pi}{2\left|\mu\right|\sinh(\pi/2)} \quad\text{ and }\quad I_{++} \sim \left(\frac{2}{\pi}\right)\log(\left|2\sinh(\pi/2)\mu\right|),\text{ as } \left|\mu\right|\rightarrow +\infty.$$ 
\end{remark}

\begin{definition}\label{def:highways}
We call \emph{highways} the two curves  $H_{\text{l}}\subset (0,\pi)\times \mathbb{T}$  and $H_{\text{r}}\subset(\pi,2\pi)\times \mathbb{T}$ such that $\mathcal{L}^*(I,\theta) = A_{00}+A_{01}$.
By Proposition \ref{pro:geometrical_proposition}, they exist at least for $I\in\left(-\infty,-I_{++}\right) \cup\left(-I_+,I_+\right)\cup\left(I_{++},+\infty\right)$ for $\left|\mu\right|\geq 0.615$ and for any value $I$ for $\left|\mu\right|<0.625$.
If $a_{10}>0$, by \eqref{eq:der_red_poi_fun_theta}, $\partial\mathcal{L}^*/\partial \theta$ is positive (respectively negative) along the highway $H_{\text{r}}$ (resp. $H_{\text{l}}$).
If $a_{10}<0$, change $H_{\text{l}}$ to $H_{\text{r}}$. 
\end{definition}

\begin{figure}[h]
\centering
\includegraphics[scale=0.27]{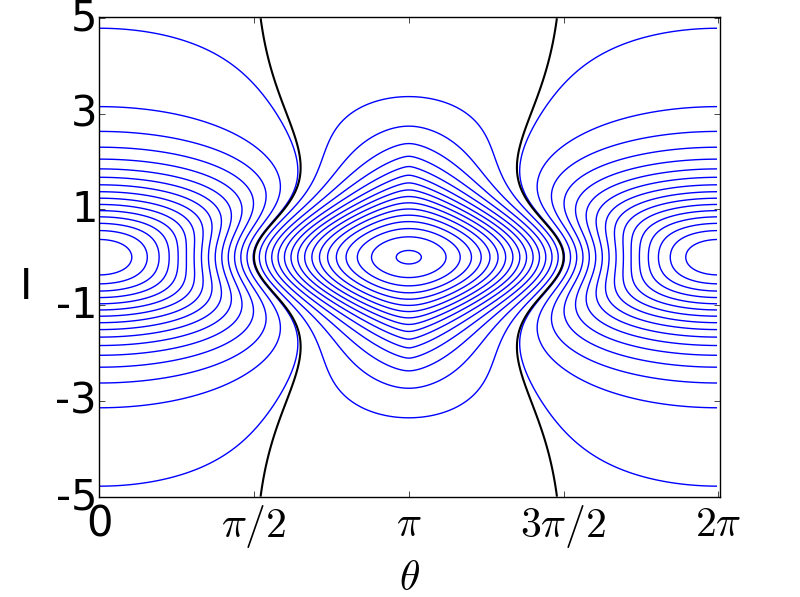}
\caption{\emph{Highways} in black for $\mu = 0.6$.}\label{fig:highways_example}
\end{figure}
\subsection{Results about global instability}

Now we are going to prove two results about existence of the diffusion phenomenon in our model.
The first one is a direct application of the geometrical Proposition \ref{pro:geometrical_proposition} just proved and describes the diffusion that takes place close to the highways.
The second is a more general type of diffusion, valid also for the values of the action $I$ where there are no highways.

\subsubsection{Diffusion close to highways}

\begin{theorem}\label{teo:easy_diffusion}
Assume that $a_{10}\,a_{01}\,\neq\,0$ in the Hamiltonian \eqref{eq:hamiltonian_int}$+$\eqref{eq:perturbation_int}.
Then, for any $I^*$ there exists $\varepsilon^{*}=\varepsilon^{*}(I^{*})>0$ such that for
$0 < \varepsilon < \varepsilon^{*}$, there exists a trajectory
$(p(t),q(t),I(t),\varphi(t))$ such that for some $T>0$
\[ I(0) \leq -I^*; \qquad I(T) \geq I^{*},\]
where the admissible values for $I^* = I^*(\mu)$ satisfy
\begin{itemize}
\item For $\left|\mu\right|<0.625$, $I^*$ is arbitrary $I^* \in\left(0,+\infty\right)$.
\item For $ 0.625\leq\left|\mu\right|\leq 1$, $I^*\in\left(0,I_{+}\right)$, where
$I_{+} = \min\{I>0:I^3\sinh(\pi/2)/\sinh(\pi I/2) = 1/\left|\mu\right|\}$.
\item For $\left|\mu\right|\geq 1$, $I^*\in\left(0,I_+\right)$, where $I_+=\{I>0:I^2\sinh(\pi/2)/\sinh(\pi I/2) = 1/\left|\mu\right|\}.$
\end{itemize}
\end{theorem}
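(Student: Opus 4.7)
The plan is to build a pseudo-orbit of a single scattering map following a highway from $I=-I^*$ to $I=I^*$, and then upgrade it to a genuine trajectory of $H_\varepsilon$ by a shadowing argument.

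\emph{Step 1 (set-up of the highway).} Proposition \ref{pro:geometrical_proposition} gives, in each of the three parameter regimes, the admissible set $B$ on which the level curve $\mathcal{L}^*(I,\theta)=A_{00}+A_{01}$ is a union of two vertical smooth graphs $H_\textrm{l},H_\textrm{r}$. The admissibility conditions on $I^*$ in the three cases of the theorem are precisely those that ensure $[-I^*,I^*]\subset B$, so a full vertical highway $H$ (say $H_\textrm{r}$ if $a_{10}>0$, $H_\textrm{l}$ otherwise) is defined as a graph $\theta=\theta(I)$ over $[-I^*,I^*]$. On this highway we have by \eqref{eq:der_red_poi_fun_theta} that $\partial\mathcal{L}^*/\partial\theta=-A_{10}(I)\sin\psi$ with $\psi\in(0,\pi)\cup(\pi,2\pi)$, so there is a uniform constant $c=c(I^*,a_{10},a_{01},\mu)>0$ such that $|\partial\mathcal{L}^*/\partial\theta|\ge c$ on a compact neighbourhood of $H\cap\{|I|\le I^*\}$.

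\emph{Step 2 (pseudo-orbit in the NHIM).} By \eqref{eq:second_definition_SM}, $\mathcal{S}$ is, up to $\mathcal{O}(\varepsilon^2)$, the $-\varepsilon$-time map of the autonomous Hamiltonian $\mathcal{L}^*(I,\theta)$. Since $H$ is a level set of $\mathcal{L}^*$, the level sets close to $H$ foliate a tubular neighbourhood of $H$ in $(\theta,I)$-space, and iterates of $\mathcal{S}$ starting within $\mathcal{O}(\varepsilon)$ of $H$ remain within $\mathcal{O}(\varepsilon)$ of $H$ for as long as $\mathcal{O}(\varepsilon)\cdot N\ll 1$, i.e. for $N\le N_0/\varepsilon$ with a suitable $N_0$. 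Along such iterates the action increment at each step is
\[
\Delta I \;=\; \varepsilon\,\frac{\partial\mathcal{L}^*}{\partial\theta}(I,\theta)+\mathcal{O}(\varepsilon^2),
\]
of constant sign and of modulus $\ge c\varepsilon/2$ by Step 1 (Remark \ref{rem:norm_red_poin_fun} is satisfied since $c$ is independent of $\varepsilon$). Therefore $N=\lceil 4I^*/(c\varepsilon)\rceil$ iterations suffice to produce a $\mathcal{S}$-pseudo-orbit $\{(I_n,\theta_n)\}_{n=0}^{N}$ with $I_0\le -I^*$ and $I_N\ge I^*$, lying in $\widetilde\Lambda_\varepsilon$.

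\emph{Step 3 (shadowing).} Apply a standard shadowing lemma for NHIMs with transverse intersection of invariant manifolds (as used in \cite{Delshams2011,Delshams2009,Seara2006}): since $\mathcal{S}$ is the homoclinic-channel map associated to the transverse homoclinic manifold detected by the Melnikov potential, and since at each point of the pseudo-orbit this transversality is uniform, there exists a true trajectory of $H_\varepsilon$ whose projection on $\widetilde\Lambda_\varepsilon$ visits successive $\mathcal{O}(\varepsilon)$-neighbourhoods of $(I_n,\theta_n)$ for $n=0,\dots,N$. In particular the action variable of this trajectory goes from $\le -I^*+\mathcal{O}(\varepsilon)$ to $\ge I^*-\mathcal{O}(\varepsilon)$, and adjusting $\varepsilon^*$ so that the $\mathcal{O}(\varepsilon)$ error is smaller than any desired margin completes the claim (one may absorb the margin by slightly enlarging the target interval, using that the highway is defined on an open neighbourhood of $[-I^*,I^*]$).

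\emph{Main obstacle.} The delicate point is control of the cumulative $\mathcal{O}(\varepsilon^2)$ error of $\mathcal{S}$ over $N=\mathcal{O}(1/\varepsilon)$ iterations: the pseudo-orbit must not drift off the tubular neighbourhood of $H$ where $|\partial\mathcal{L}^*/\partial\theta|\ge c/2$, otherwise the uniform lower bound on $\Delta I$ is lost. This is handled by a Gronwall-type comparison of $\mathcal{S}$ with the $-\varepsilon$-time flow of $\mathcal{L}^*$, which keeps the deviation transverse to $H$ of order $N\varepsilon^2=\mathcal{O}(\varepsilon)$, well within the tube. Everything else — existence and smoothness of the scattering map, the Melnikov computation, the exactness of $\mathcal{S}$ — is already in place through Proposition \ref{prop:melnpot} and the material of Section \ref{sec:Inner and out}.
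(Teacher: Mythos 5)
Your overall strategy — follow a highway, build a pseudo-orbit on the NHIM, then shadow — is the same as the paper's.  The admissibility conditions in Step~1 do indeed amount to $[-I^*,I^*]\subset B$, and the uniform lower bound on $|\partial\mathcal{L}^*/\partial\theta|$ along the highway is correct.  But the construction of the pseudo-orbit in Steps~2--3 deviates from the paper's and, as written, has a genuine gap.

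The paper does \emph{not} iterate the scattering map alone.  Its pseudo-orbit alternates the scattering map with the inner map: starting from $(I_i,\theta_i)\in H_\text{r}$, one application of $\mathcal{S}_\M$ yields a point that is $\mathcal{O}(\varepsilon^2)$ off the highway, and then an inner-dynamics segment $\phi_{t_{i+1}}$ is used to land back on $H_\text{r}$.  The resulting pseudo-orbit therefore sits exactly on the highway at every step, and the shadowing results invoked (\cite{fontich2000,fontich2003}, or Corollary~3.5 of \cite{Gidea2014}) are precisely of the form ``given a sequence obtained by alternating scattering-map jumps with inner-dynamics segments of suitable length, there is a true orbit shadowing it.''  Your Step~2 produces a pure $\mathcal{S}$-pseudo-orbit with no inner-dynamics insertions, and Step~3 then appeals to ``a standard shadowing lemma'' without noting that such lemmas require interleaved inner dynamics of positive (indeed, sufficiently long) length.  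Once you insert those inner-dynamics pieces, they move $\theta$ (at rate $\mathcal{O}(\varepsilon)$) and can take the resulting pseudo-orbit out of the tube where $|\partial\mathcal{L}^*/\partial\theta|\ge c/2$.  Your Gronwall bound controls the cumulative error of the scattering map against the $\mathcal{L}^*$-flow, but it says nothing about the $\theta$-drift caused by the inner-dynamics segments; that is exactly what the paper's ``come back to $H_\text{r}$'' step handles, and it is also why the paper later (Section~\ref{sec:time of diffusion}) works out an explicit ergodization time $T_\text{i}$.  In short: your Step~2 error control is fine for the scattering map, but Step~3 cannot be carried out on that pseudo-orbit alone — you must interleave inner-map segments and control (or cancel, as the paper does) their effect on $\theta$.

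A smaller point: in Step~2 you write that iterates stay $\mathcal{O}(\varepsilon)$-close to $H$ ``for as long as $\mathcal{O}(\varepsilon)\cdot N\ll 1$, i.e.\ for $N\le N_0/\varepsilon$'', and then take $N=\lceil 4I^*/(c\varepsilon)\rceil$.  These two statements are not compatible: with that $N$ one has $\varepsilon N=\mathcal{O}(1)$, not $\ll 1$.  The conclusion (error $\mathcal{O}(\varepsilon)$ after $\mathcal{O}(1/\varepsilon)$ iterates) is in fact correct — the discrete Gronwall bound gives a cumulative error of order $\varepsilon\bigl(e^{K\varepsilon N}-1\bigr)$, which for $\varepsilon N=\mathcal{O}(1)$ is still $\mathcal{O}(\varepsilon)$ with a constant that grows exponentially in $I^*$, absorbed into $\varepsilon^*(I^*)$ — but the justification as you stated it is wrong and should be replaced by the Gronwall computation.
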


\begin{proof}
Recall that the reduced Poincar\'e  function, given in (\ref{eq:red_poi_fun_theta}), is
$$\mathcal{L}^*_{\M}(I,\theta) = A_{00} + A_{10}(I)\cos(\theta - I\tau^*_{\M}(I,\theta)) +  A_{01}\cos(-\tau^*_{\M}(I,\theta)).$$
During this proof, we denote $\tau^*_{\M}(I,\theta)$ simply by $\tau_{\M}^*$.
For $\varepsilon $ small enough, the scattering map $\mathcal{S}_{\M}(I,\theta)$ takes the form \eqref{eq:second_definition_SM_int} for $\mathcal{L}^* = \mathcal{L}^*_{\M}$, so that orbits under the scattering map are contained in the level curves of the reduced Poincar\'{e} function $\mathcal{L}_{\M}^*$ up to error of $\mathcal{O}(\varepsilon^2)$.

Proposition \ref{pro:geometrical_proposition} ensures the existence of the highways as two vertical level curves $\mathcal{L}^*_{\M}(I,\theta) = A_{00} + A_{01}$ for $I$ in 
\begin{itemize}
\item $\left(-\infty, +\infty \right)$ for $\left|\mu\right|<0.625$.
\item $\left(-I_{+},I_{+}\right),$ where
\begin{itemize}
\item $I_+ = \min\{I>0:I^3\sinh(\pi/2)/\sinh(\pi I/2) = 1/\left|\mu\right|\}$ for  $0.625\leq\left|\mu\right|\leq 1$;
\item $I_+ = \min\{I>0:I^2\sinh(\pi/2)/\sinh(\pi I/2) = 1/\left|\mu\right|\}$ for $\left|\mu\right|\geq 1$.
\end{itemize}
\end{itemize}

Take $a_{10}>0$.
Then given $I^* >0$ (with the restriction $I^*<I_+$ if $\left|\mu\right| >0.625$), $\partial \mathcal{L}^*_{\M} >0$ along the highway $H_{\text{r}}$.
Note that $(I_{0},\theta_{0}) := (0,3\pi/2)\in H_{\text{r}}$.
Taking any $(I_i,\theta_i)\in H_{\text{r}}$, $I_i>0$,  its image under the scattering map $(\widetilde{I}_{i+1},\widetilde{\theta}_{i+1}) = \mathcal{S}_{\M}(I_i,\theta_i)$ satisfies $\widetilde{I}_{i+1} - I_{i} = \mathcal{O}(\varepsilon)>0$ and is $\mathcal{O}(\varepsilon^2)$-close to $H_{\text{r}}$.
Using the inner map on $\tilde{\Lambda}$, we find $(I_{i+1},\theta_{i+1})= \phi_{t_{i+1}}(\widetilde{I}_{i+1},\widetilde{\theta}_{i+1})\in H_{\text{r}}$ with $I_{i+1} - I_{i} = \mathcal{O}(\varepsilon)>0$.
Continuing recursively in this way, we get a pseudo-orbit $\{(I_{i},\theta_i), i = 0,\dots,N\}\subset H_{\text{r}}$ with $I_{N}\geq I^*$ formed by applying successively the scattering map and the inner map.
Using the symmetry of $H_{\text{r}}$, introducing $I_{i} = -I_{i}$ for $i<0$, we have the pseudo-orbit $\{(I_i,\theta_i),\left|i\right| \leq N\}\subset H_{\text{r}}$.
Using standard shadowing results in \cite{fontich2000,fontich2003} (changing slightly $I_{i}$ to obtain an irrational frequency of the inner map, if necessary ) or newer results like the corollary 3.5 of \cite{Gidea2014}, there exists a trajectory of the system such that for some $T$, $I_{0}\leq -I^*$ and $I(T)\geq I^*$.
If $a_{10}<0$, changing $H_{\text{r}}$ to $H_{\text{l}}$ all the previous reasoning applies.
\end{proof}

\begin{figure}[h]
\centering
\includegraphics[scale=0.27]{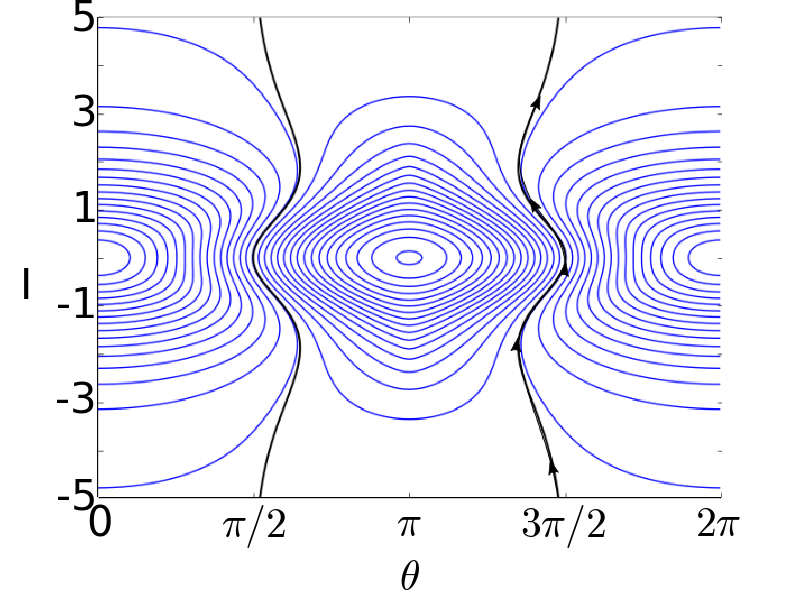}
\caption{The diffusion trajectory in $\mathcal{S}_{\M}$ for $\mu = 0.6$.}
\end{figure}

\subsubsection{The general diffusion\label{gen_diff}}

Now we present a theorem that ensures the diffusion for all values of the parameter $a_{10},a_{01}$ (as long as $a_{10}a_{01}\neq 0$) and for any value of $I^*$.
Beside we prove it using the geometrical properties of the scattering map that we have explored up to now.

\begin{theorem}\label{teo:main_theorem}
Assume that $a_{10}\,a_{01}\,\neq\,0$ in the Hamiltonian \eqref{eq:hamiltonian_int}$+$\eqref{eq:perturbation_int}.
Then, for any $I^{*}>0$, there exists $\varepsilon^{*}=\varepsilon^{*}(I^{*})>0$  such that for any $\varepsilon$,
$0 < \varepsilon < \varepsilon^{*}$, there exists a trajectory
$(p(t),q(t),I(t),\varphi(t))$ such
that for some $T>0$
$$I(0)\leq -I^*<I^*\leq I(T).$$
\end{theorem}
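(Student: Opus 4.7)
The plan is to extend the highway argument of Theorem \ref{teo:easy_diffusion} to the full range of $I$, which in the regime $|\mu|\geq 0.625$ requires crossing the gap $|I|\in[I_+,I_{++}]$ where no highway is available. Outside this gap the highways $H_{\text{l}}, H_{\text{r}}$ still exist by Proposition \ref{pro:geometrical_proposition}, and on them the argument of Theorem \ref{teo:easy_diffusion} already produces a monotone-in-$I$ pseudo-orbit built by alternating iterates of the scattering map $\mathcal{S}_{\M}$ and of the inner map. The new content, therefore, is a bridging construction across the two gaps $[-I_{++},-I_+]$ and $[I_+,I_{++}]$, followed by a shadowing argument that concatenates everything into a single true trajectory.

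To bridge a gap I would use that $\mathcal{L}^*_{\M}$ is a smooth function on the cylinder with only isolated critical points, so between them there are level curves along which $I$ varies monotonically; each single iterate of $\mathcal{S}_{\M}$ along such a curve gives an $O(\varepsilon)$ gain in $I$. The delicate issue is that some level curves of a single $\mathcal{L}^*_{\M}$ are closed loops around elliptic points and would block a monotone ascent. My strategy for escaping them is to switch scattering maps, exploiting the richer structure catalogued in Theorem \ref{new_theorem}. Concretely, for $0.625\leq|\mu|\leq 0.97$, in the tangency region there are up to six scattering maps (corresponding to the different branches of $R_{\theta}(I)\cap C(I)$); their combined phase portraits cover the gap, and I would patch selected arcs of each to obtain an ascending path. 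For $|\mu|>0.97$, where the horizontal parameterisation $\xi$ produces the holes described in Section \ref{sec:holes}, I would fill those holes using the scattering map built from the vertical parameterisation $\eta$ of \eqref{eq:eta_definition}, which is defined precisely where the horizontal one fails.

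The glue between successive scattering-map arcs is provided by short segments of the inner flow. Since $F(I,\varphi)=I^{2}/2+\varepsilon a_{10}(\cos\varphi-1)$ is integrable, its orbits lie on level sets of $F$ which are $O(\sqrt{\varepsilon})$-close to horizontal lines $I=\text{const}$, so an inner arc changes $\theta$ essentially at rate $I$ while keeping $I$ almost fixed; this lets me transport the current point to a $\theta$ where a different scattering map again has an ascending level curve. Assembling the pieces, starting from $I_{0}\leq -I^{*}$ I would follow $H_{\text{r}}$ (or $H_{\text{l}}$, depending on the sign of $a_{10}$) up to $I=-I_{++}$, cross the negative gap by the bridging procedure, follow the highway again until $I\approx -I_{+}$, cross through the interval $(-I_{+},I_{+})$ along the central segment of the highway that always exists, bridge the positive gap, and finally follow the highway up to $I\geq I^{*}$.

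The resulting hybrid pseudo-orbit of the combined scattering/inner system is then upgraded to a genuine orbit of the Hamiltonian flow by the standard shadowing results of \cite{fontich2000,fontich2003} or \cite{Gidea2014}, after perturbing each waypoint slightly to arrange an irrational rotation number on the inner tori when required. The main obstacle I anticipate is the gap-crossing step: because the inner dynamics is integrable, there is no resonance to exploit in the Arnold style, and monotonicity of $I$ along the concatenated outer arcs must be verified directly from the analytic form of $\mathcal{L}^*_{\M}$ (and, for $|\mu|>0.97$, of its vertical-crest counterpart). Once this geometric verification is in place, the remainder is routine: apply Proposition \ref{prop:melnpot}, the formula \eqref{eq:second_definition_SM} for $\mathcal{S}$, and the shadowing lemma to obtain the desired trajectory.
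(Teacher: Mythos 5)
Your proposal follows essentially the same strategy as the paper's proof: split by the size of $\mu$, use the highways $H_{\text{l}}, H_{\text{r}}$ from Proposition~\ref{pro:geometrical_proposition} and Theorem~\ref{teo:easy_diffusion} wherever they exist, bridge the gap $|I|\in[I_+,I_{++}]$ by switching to one of the additional scattering maps produced by the crest/NHIM-line tangencies catalogued in Theorem~\ref{new_theorem}, glue the outer arcs with short inner-flow segments (exploiting that the inner Hamiltonian $F$ is integrable and its level sets are nearly horizontal), and finish with the shadowing results of \cite{fontich2000,fontich2003,Gidea2014}. The one place where the paper is concretely more precise than your sketch is the gap-crossing step you flag as the "main obstacle": the paper resolves it by explicitly selecting the branch $\mathcal{S}_{\M,\text{A}}$ associated to $\theta_{\text{A}}$ in \eqref{eq:three_bijections} and verifying $\partial\mathcal{L}^*_{\M}/\partial\theta(I,\theta_{\text{A}}(\psi)) = -A_{10}(I)\sin\psi>0$ on $D_{\text{A}}(I)\cap(\pi,2\pi)$, which gives the monotone ascent in $I$ through the gap; conversely, you are more careful than the written proof about $|\mu|>0.97$, where the paper's proof stops at $0.625\le|\mu|\le0.97$ and relies on the reader to fill in the holes via the vertical parameterisation $\eta$ from \eqref{eq:eta_definition} or the inner map, as your proposal spells out.
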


\begin{proof}
Our proof consists on showing the existence of adequate orbits under several scattering maps, whose orbits will be given approximately by the level curves of the corresponding reduced Poincar\'{e} functions, in such a way the value of $I$ will be increasing.
Later on, we will combine them with orbits under the inner map to produce adequate pseudo-orbits for shadowing.

We begin with the simplest case.
Assume $\left|\mu\right| <0.625$.
In this case the highways, by Proposition \ref{pro:geometrical_proposition}, are defined for any value of $I\in\mathbb{R}$ and  Theorem \ref{teo:easy_diffusion} ensures the diffusion phenomenon.

We now assume $0.625\leq \left|\mu\right|\leq 0.97$.
In this case for some value of $I$ there may exist tangencies between the crests $\C_{\M}(I)$ and the \NH\, lines $R_{\theta}(I)$.
Again by Proposition \ref{pro:geometrical_proposition}, in this  case the highways are defined for all $I\in\left(-\infty,-I_{++}\right)\cup\left(-I_+,I_+\right)\cup\left(I_{++},+\infty\right)$ where $0<I_+\leq I_{++}$.
The case $I^*\in\left(0,I_{+}\right)$ is contained in the result of Theorem \ref{teo:easy_diffusion}.
So, we are going to consider $I^*\in\left[I_{+},+\infty\right)$. 

As before, we have one $\mathcal{S}_{\M}$-orbit contained in one highway where $I$ is increasing.
We have to study the region of $I$ where the highways are not defined.

Our strategy is proving the existence of a scattering map in the side of $\theta$ where the $I$ is increasing, that is, for $\theta \in \left(0,\pi\right)$ or $\theta \in \left(\pi,2\pi\right)$ (this depends of $\text{sign}(a_{10})$) where $\partial \mathcal{L}^*_{\M}/\partial \theta$ is positive.
Then, we will use the inner map (or another scattering map $S'$) for changing of pseudo-orbit (level curve) of $\mathcal{L}^*_{\M}$.
In this way, we continue the growth of $I$. 

For any $I\in\left(-I_{++},-I_{+}\right)\cup\left(I_{+},I_{++}\right)$, there exist tangencies between $C_{\M}(I)$ and $R_{\theta}(I)$, i.e., there exists $\psi$ such that $\partial \xi_M/\partial \psi = 1/I$, and therefore there exist three different scattering maps.

Consider the case with $\mu >0$.
As we have seen in Subsection \ref{sec:mult_scat_map}, $\psi\in\mathbb{T}\mapsto\theta\in\mathbb{T}$ given in (\ref{eq:theta_psi}) is no longer a change of variables, but we have three bijections $\theta_i: D_i(I) \rightarrow \mathbb{T}$, $i\in\{\text{A},\text{B},\text{C}\}$ (see \eqref{eq:three_bijections}).
And for each bijection we have a scattering map associated to it.
Among these three scattering maps, we will chose only one for the diffusion.
Consider first the case $a_{10}>0$ (recall that the highway $H_{\text{r}}$ goes from $-I_+$ toward $I_+$).
We chose for instance, the scattering map associated to the reduced Poincar\'{e} function $\mathcal{L}^*_{\M,\text{A}}(I,\theta) = \mathcal{L}_{M}^*(I,\theta_{\text{A}}(\psi))$, $\psi \in D_{\text{A}}(I)$ since
$$\frac{\partial \mathcal{L}^*_{\M} }{\partial \theta}(I,\theta_{A}(\psi)) = -A_{10}(I)\sin(\psi) >0\quad  \text{for } \psi \in D_{\text{A}}(I)\cap\left(\pi,2\pi\right)$$
and therefore the iterates under the scattering map $\mathcal{S}_{\M.A}(I,\theta)$ \eqref{eq:second_definition_SM_int} associated to $\mathcal{L}^*_{\M,\text{A}}(I,\theta)$ increase the values of $I$ for $\theta\in(\pi,2\pi)$.
Notice that by definition of $D_{A}(I)$ for $\psi\in D_{\text{A}}(I)\cap \left(\pi,2\pi\right) = \left(\psi_2,2\pi\right)$ with $\psi_2 \in \left(\pi,3\pi/2\right)$ (see Subsection \ref{sec:mult_scat_map}) there are no tangencies between the crest and the \NH\, segment.

We can now proceed in the following way.
We first construct a pseudo-orbit $\{(I_i,\theta_i): i = 0,\dots,N_1\}\subset H_{\text{r}}$ with $I_0 = 0$  and $I_{\text{N}} = I_+$, as in the proof of Theorem \ref{teo:easy_diffusion}.
Note that all these points lie in the same level curve of $\mathcal{L}_{\M}^*$, that is, $\mathcal{L}_{\M}^*(I_i,\theta_i) = A_{00} + A_{01}$, $i = 0,\dots,N_1$.
Applying the inner dynamics, we get $(I_{\text{N}_1 + 1},\theta_{\text{N}_1 + 1}) = \phi_{t_{N_1}}(I_{\text{N}_1},\theta_{\text{N}_1} )$ with $\theta_{\text{N}_1 + 1}\in (\theta_{\text{A}}(\psi_2(I_{N_1})),2\pi)$ and then we construct a pseudo-orbit $\{(I_i,\theta_i): i = N_1 +1,...,N_1+M_1\}\subset \mathcal{L}_{\M,\text{A}}^*(I_{\text{N}_1+1},\theta_{\text{N}_1+1}) = l_{\text{N}_1 +1}$ with $\theta_{i}\in(\theta_{\text{N}_1 +1},2\pi)$, $2\pi -\theta_{\text{N}_1 + \M_{1}} = \mathcal{O}(\varepsilon^2)$.
Applying the inner dynamics, we get  $(I_{\text{N}_1 + \M_1+1}, \theta_{\text{N}_1 + \M_1+1})=\phi_{t_{\text{N}_1 + \M_1}}(I_{\text{N}_1 + \M_1}, \theta_{\text{N}_1 + \M_1})$ with $\theta_{\text{N}_1 + \M_1 +1}\in \theta_{\text{A}}(\psi_{2}(I_{\text{N}_1 + \M_1}),2\pi))$.
Recursively, we construct pseudo-orbit $\{(I_i,\theta_i): i = \text{N}_1 +1,...,\text{N}_2\}$ such that $I_{\text{N}_2}\geq I_{++}.$ We finally follow the highway from $I_{++}$ to $I^*$ constructing a pseudo-orbit $\{I_i,\theta_i): i = \text{N}_2, ...,I_{\text{N}_3}\}\subset H_{\text{r}}$ with $I_{\text{N}_3} = I^*$.

Using the symmetry properties (see Lemma \ref{lem:geometrical_lemmas}) introducing $I_i = -I_i$ for $i<0$ we have a pseudo-orbit $\{(I_i,\theta_i): \left|i\right|\leq \text{N}_3\}$ with $I_{-\text{N}_3} = -I^*$, $I_{\text{N}_3}=I^*$.
Using now the same shadowing techniques as in the proof of Theorem \ref{teo:main_theorem}, there exists a diffusion trajectory.
If $a_{10}<0$, changing $H_{\text{r}}$ to $H_{\text{l}}$ all the previous reasoning applies. 
\end{proof}

\begin{remark}
For the proof of this theorem we have  chosen a simple pseudo-orbit, just choosing the scattering map $\mathcal{S}_{\M,A}$ when it was not unique.
Of course, there is a lot of freedom in choosing pseudo-orbits, and we do not claim that the one chosen here is the best one concerning minimal time of diffusion. 
\end{remark}

\begin{figure}[h]
\centering
\subfigure[$\mathcal{S}_{\M}$ for $\mu=1.5$ ]{\includegraphics[scale=0.27]{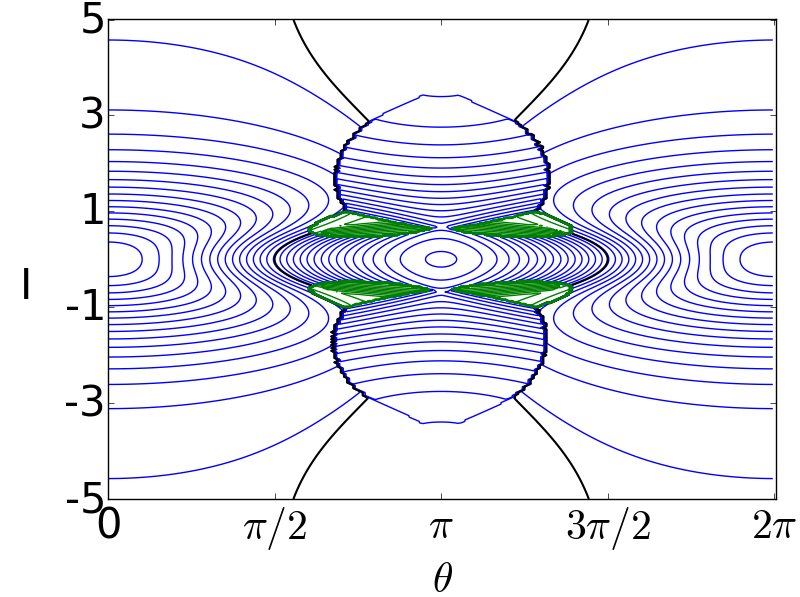}}
\qquad
\subfigure[$\mathcal{S}_{\M}$ combined with inner map (in red)]{\includegraphics[scale=0.27]{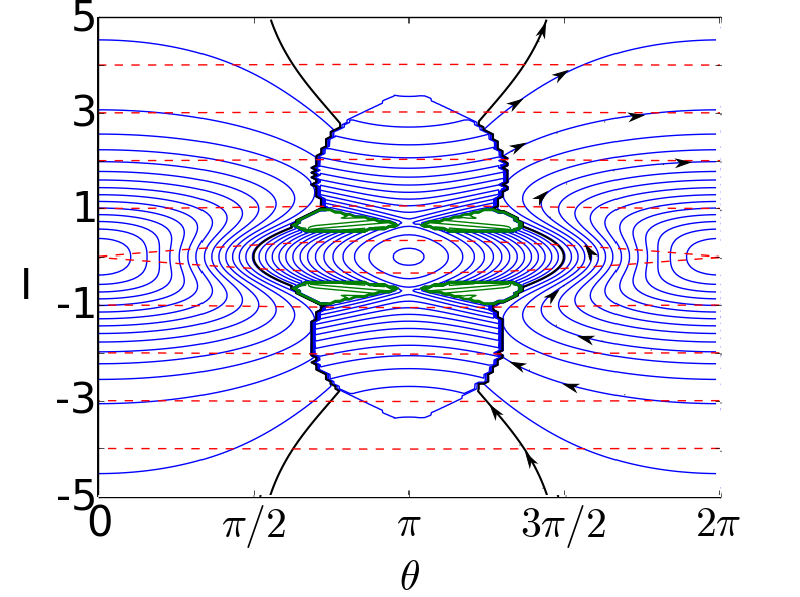}}
\caption{For $\mu = 1.5$, highway are not preserved. Inner map and scattering map can be adequately combined}
\end{figure}
\begin{remark}
\textbf{A rough estimate for $\varepsilon^* = \varepsilon^*(I^*)$ of Theorem \ref{teo:main_theorem} }.
The scattering map $\mathcal{S}(I,\theta)$ \eqref{eq:second_definition_SM_int} is the $-\varepsilon$ time map of the Hamiltonian $\mathcal{L}^*(I,\theta)$ given in \eqref{eq:red_poi_fun_theta}, up to order $\mathcal{O}(\varepsilon^2)$.
Therefore, as already noticed in Remark \ref{rem:norm_red_poin_fun}, if $\left|\partial \mathcal{L}^*/\partial \theta (I,\theta)\right|\leq \varepsilon$ or $\left|\partial \mathcal{L}^*/\partial I (I,\theta)\right|\leq \varepsilon$, the level curves of $\mathcal{L}^*(I,\theta)$ are not useful enough to describe the orbits of $\mathcal{S}$.
It is easy to check that $\nabla\mathcal{L}^*(I,\theta)$ only vanishes for $I= 0$, $\theta = 0,\,\pi \mod 2\pi$ and that $\left\|\nabla\mathcal{L}^*(I,\theta)\right\|\lesssim 8\pi \left|a_{10}I\right|e^{-\pi\left|I\right|/2}$ for $\left|I\right|\rightarrow +\infty$.
Thus, in general one has to avoid small neighborhoods of $(I,\theta) = (0,0),\,(0,\pi)$ and take care in regions where $\left|I\right|$ is very large.
In particular, the highways $H_{\text{l}},\,H_{\text{r}}$ are far from $(I,\theta) = (0,0),\,(0,\pi)$ and on them  $\left\|\nabla\mathcal{L}^*(I,\theta)\right\|\geq A_{10}(I)(1- \mathcal{O}(\beta(I)\mu))\gtrsim 4\pi\left|a_{10}I \right|e^{-\pi\left|I\right|/2}$ for large $\left|I\right|$, from which we get an upper bound for $\varepsilon^*(I^*)$, which is exponentially small in $\left|I^*\right|$ for  large $\left|I^*\right|:$
$$\varepsilon^*(I^*)<4\pi\left|a_{10}\right|\left|I^*\right|\exp(-\pi\left|I^*\right|/2).$$ 
For smaller values of $I^*$, one can compute numerically the level curves of $\left\|\nabla\mathcal{L}^*(I,\theta)\right\| = \varepsilon$ and obtain $\varepsilon^* >\varepsilon^*(I^*)$ such that $\left\|\nabla\mathcal{L}^*(I,\theta)\right\| = \varepsilon^*$ implies $\left|I\right|>\left|I^*\right|$.
See Table \ref{tab:estimates_09} for some values of $I^*$, and $\mu = 0.9$.
\end{remark}
\begin{table}[h]
\centering
\begin{tabular}{@{}cclll@{}}
\toprule
\multicolumn{1}{r}{}$I^*$ & 1  & 2  & 3 & 4 \\ \midrule
                     $\varepsilon^*(I^*)$ & 1.4 & 0.75 & 0.25 & 0.07 \\ \bottomrule
\end{tabular}
\caption{Estimates of $\varepsilon^*$ for $\mu = 0.9$}\label{tab:estimates_09}
\end{table}


\section{The time of diffusion}\label{sec:time of diffusion}

In this section we will provide an estimate of the diffusion time.
For simplicity, we are going to estimate the time for a diffusion using a highway (see Definition \ref{def:highways}) as a guide, that is, we are going to construct a pseudo-orbit close a the highway.
This implies to iterate the scattering map using as  initial point a point on a highway.
As we have seen before, see Subsection \ref{sub:Meln pot and crests}, one iterate of $\mathcal{S}_{\M}(I,\theta)$ is approximated by $-\varepsilon$ time map of the Hamiltonian $\mathcal{L}^*_{\M}(I,\theta)$ up to $\mathcal{O}(\varepsilon^2)$.
However, if we iterate the scattering map a number $n$ of times, it generates a propagated error with respect to the level curve of $\mathcal{L}_{\M}^*(I,\theta)$.

So, first we study the error generated by $n$ iterates of the scattering map.
Later, we will estimate the time of diffusion along the highway combining the scattering and the inner maps.  

\subsection{Accuracy of the scattering map}\label{sec:accuracy}

Equation \eqref{eq:second_definition_SM_int} for the scattering map $\mathcal{S}$ is good enough up to an error of $\mathcal{O}(\varepsilon^2)$ for understanding one iterate of $\mathcal{S}$.
But if we consider $\mathcal{S}^n$, that is, $n$-iterates of $\mathcal{S}$ some problems appear.
These problems are related with the lack of precision of the equation \eqref{eq:second_definition_SM_int}:
\begin{itemize}
\item Equation \eqref{eq:second_definition_SM_int} of the scattering map has a relative error of order $\mathcal{O}(\varepsilon)$ and an absolute error $\mathcal{O}(\varepsilon^2)$.
Therefore, for $n$-iterates, when $n$ is large, the error is propagated in a such way that it cannot be discarded.

\item Highways are unstable, i.e., the nearby level curves of $\mathcal{L}^*$ move away from  highways (see instance Fig.\ref{fig:crest_and_SM_mu_06}.b).
\end{itemize}

Now, our goal is to show how we can control these errors along a region $U$ in the phase space $(I,\theta)$ close to a highway.
Basically, the control is to choose a good moment and interval to apply the inner map to come back to the highway and to maintain the errors small enough.

\paragraph{The propagated error\\\\}
After iterating $n$  times formula \eqref{eq:second_definition_SM_int} for the scattering map, one gets for $(I_n,\theta_n) = \mathcal{S}^n(I_0,\theta_0)$:
\begin{equation}
I_{n} = I_{0} + \varepsilon\sum_{j = 0}^{n-1}\frac{\partial \mathcal{L}^*}{\partial \theta}(I_j,\theta_j) + \mathcal{O}(n\varepsilon^2),\quad\text{and also}\quad \theta_{n} = \theta_0 - \varepsilon\sum_{j = 0}^{n-1}\frac{\partial \mathcal{L}^*}{\partial I}(I_j,\theta_j) + \mathcal{O}(n\varepsilon^2).\label{eq:I_n_and_theta_n}
\end{equation}

From now on, in this section, we will use the following notation:
\begin{itemize}
\item $\mathcal{S}(I,\theta)$ is the scattering map, see \eqref{eq:second_definition_SM_int}.
\item $S_{\text{T}}(I,\theta) = \left(I+\varepsilon\,\partial \mathcal{L}^{*}/\partial \theta(I,\theta),\theta-\varepsilon \, \partial \mathcal{L}^{*}/\partial I(I,\theta)\right)$ is the truncated scattering map.
\item $S_{0,t}(I,\theta) = (I(t),\theta(t))$ is the solution of the Hamiltonian system
\begin{eqnarray}\label{eq:Edo}
\dot{I}(t) = \frac{\partial \mathcal{L}^*}{\partial \theta}(I(t),\theta(t)) &\quad & \dot{\theta}(t) = -\frac{\partial \mathcal{L}^*}{\partial I}(I(t),\theta(t)),
\end{eqnarray}
with initial condition $(I(0),\theta(0)) = (I,\theta)$.
\end{itemize}

Let $(I_{\h},\theta_{\h})$ be a point in the highway.
The error between the scattering map and the level curve of the reduced Poincar\'{e} function after $n$-iterates is given by 
\begin{equation}
\left\|\mathcal{S}^{n}(I_{\h} + \Delta I,\theta_{\h} + \Delta\theta) - S_{0,n\varepsilon}(I_{\h},\theta_{\h})\right\|,\label{eq:origem_error}
\end{equation}
where $\Delta I$ and $\Delta \theta$ are small.
Note that we can rewrite  \eqref{eq:origem_error} as
\begin{eqnarray*}
\|(\mathcal{S}^{n}(I_{\h}+\Delta I,\theta_{\h}+ \Delta\theta) &-& S^n_{\text{T}}(I_{\h}+\Delta I,\theta_{\h}+ \Delta\theta))\\
+(S^n_{\text{T}}(I_{\h}+\Delta I,\theta_{\h}+ \Delta\theta)&-&S_{0,n\varepsilon}(I_{\h}+\Delta I,\theta_{h}+\Delta \theta))\\ 
+ S_{0,n\varepsilon}(I_{\h}+\Delta I,\theta_{h}+\Delta \theta))&-&S_{0,n\varepsilon}(I_{\h},\theta_{\h}))\|.
\end{eqnarray*}

We now proceed to study each subtraction.
\begin{itemize}
\item We begin with $\mathcal{S}^{n}(I_{\h}+\Delta I,\theta_{\h}+ \Delta\theta) - S^n_{\text{T}}(I_{\h}+\Delta I,\theta_{\h}+ \Delta\theta)$.
From \eqref{eq:I_n_and_theta_n}, we can readily obtain by induction that
\begin{equation}\label{eq:inequality_1}
\mathcal{S}^{n}(I_{\h}+\Delta I,\theta_{\h}+ \Delta\theta) - S^n_{\text{T}}(I_{\h}+\Delta I,\theta_{\h}+ \Delta\theta) =\mathcal{O}(n\varepsilon^2).
\end{equation}
\item Now we consider $S^n_{\text{T}}(I_{\h}+\Delta I,\theta_{\h}+ \Delta\theta)-S_{0,n\varepsilon}(I_{\h}+\Delta I,\theta_{h}+\Delta \theta)$.
By the definition of $S_{\text{T}}$ we have that $S^n_{\text{T}}$ is the $n$-step of the Euler method with step size $\varepsilon$ in each coordinate for solving the system \eqref{eq:Edo}.
It is not difficult to check the standard bound (see, for instance, \cite{StoerB02}) 
\begin{equation}\label{eq:inequality_2}
\left\|S^n_{\text{T}}(I_{\h}+\Delta I,\theta_{\h}+ \Delta\theta)-S_{0,n\varepsilon}(I_{\h}+\Delta I,\theta_{h}+\Delta \theta)\right\| \leq \frac{L\varepsilon}{2}\left[(1+\varepsilon K)^n - 1\right],
\end{equation}
where $M:=\max_{(I,\theta)\in U} \left\|\J\textbf{H}(I,\theta)\left(\J\nabla\mathcal{L}^*(I,\theta\right))^T\right\| \text{ and } L = \max_{(I,\theta)\in U} \left\|\nabla\mathcal{L}^*(I,\theta)\right\|.$
\item Now we look for the last subtraction 
$ S_{0,n\varepsilon}(I_{\h}+\Delta I,\theta_{h}+\Delta \theta))-S_{0,n\varepsilon}(I_{\h},\theta_{\h})$.
Applying Gr\"{o}nwall's inequality on the variational equation associated to the Hamiltonian vector field $-\nabla\mathcal{L}^*(I,\theta)$, one gets
\begin{equation}\label{eq:inequality_3}
\left\| S_{0,\varepsilon n}(I_{\h}+\Delta I,\theta_{h}+\Delta \theta))-S_{0,\varepsilon n}(I_{\h},\theta_{\h})\right\| \leq \left\|(\Delta I,\Delta \theta)\right\|e^{K\varepsilon n}.
\end{equation}
\end{itemize}

We can now conclude from \eqref{eq:inequality_1}, \eqref{eq:inequality_2} and \eqref{eq:inequality_3}, that the propagated error is 

\begin{equation*}
\left\|\mathcal{S}^{n}(I_{\h} + \Delta I,\theta_{\h} + \Delta\theta) - S_{0,n\varepsilon}(I_{\h},\theta_{\h})\right\| \leq \mathcal{O}(n\varepsilon^2)+\frac{L\varepsilon}{2}\left[(1+\varepsilon K)^n - 1\right] + \left\|(\Delta I,\Delta \theta)\right\|e^{\mathcal{K}\varepsilon n}
\end{equation*}

To avoid large propagated errors, one has to choose $n$ such that $n\varepsilon \ll 1$.
For instance, taking 
\begin{equation}\label{eq:def_n_iterates}
n = \varepsilon^{-c},
\end{equation}
with $0<c<1$ (which implies $n\varepsilon \ll 1$) and $\left\|(\Delta I,\Delta \theta)\right\|  = \varepsilon^a$, $a>0$, one gets
\begin{equation}\label{eq:error_final}
\left\|\mathcal{S}^{n}(I_{\h} + \Delta I,\theta_{\h} + \Delta\theta) - S_{0,n\varepsilon}(I_{\h},\theta_{\h})\right\| = \mathcal{O}(\varepsilon^{2-c}, \varepsilon^a).
\end{equation}

\subsection{Estimate for the time of diffusion}\label{sec:time of diffusion}

In this section our goal is to estimate the time of diffusion along the highway.
We have three different types of estimates associated to the time of diffusion.

\begin{itemize}
\item The total number of iterates $N_{\s}$ of the scattering map.
This is the number of iterates that scattering map spends to cover a piece of a level curve of the reduced Poincar\'{e} function $\mathcal{L}^*$.
\item The time under the flow along the homoclinic invariant manifolds of $\widetilde{\Lambda}$.
This is the time spent by each application of the scattering map following the concrete homoclinic orbit to $\widetilde{\Lambda}$ up to a distance $\delta$ of $\widetilde{\Lambda}$.
This time is denoted by $T_{\h} = T_{\h}(\delta)$.
\item The time under the inner map.
This time appears if we use the inner map between iterates of the scattering map (it is sometimes called ergodization time) and we denoted it by $T_{\text{i}}$. 
\end{itemize}

For each iterate of the scattering map we have to consider the time $T_{\h}$.
Besides, we have seen in the previous subsection that to control the propagated error, we iterate \emph{successively} the scattering map just a number $n = \varepsilon^{-c}$ of times, $0<c\ll1$.
From now on we denote this number $n$ by $N_{\s\s}$.
So, after $N_{\s\s}$ iterates of the scattering maps we apply the inner dynamics during some time $T_{\text{i}}$ to come back to a distance $\varepsilon^{a}$ to the highway.
Therefore, the total time spent under the inner map is $\left\lfloor N_{\s}/N_{\s\s}\right\rfloor T_{\text{i}}$.
We estimate that the diffusion time along the highway is thus
\begin{equation}\label{eq:diffusion_time}
T_{\text{d}} = N_{\s} T_{\text{h}} +\left\lfloor N_{\s}/N_{\s\s}\right\rfloor T_{\text{i}}.
\end{equation} 

\begin{theorem}\label{prop:time}
The time of diffusion $T_{\text{d}}$ close to a highway of Hamiltonian \eqref{eq:hamiltonian_int}$+$\eqref{eq:perturbation_int} between $-I^*$ to $I^*$, for any $0<I^*<I_{+}$, with $I_{+}$ given in Proposition \ref{pro:geometrical_proposition}, satisfies the following asymptotic expression
\begin{equation*}
T_{\text{d}} = \frac{T_{\text{s}}}{\varepsilon}\left[2\log\left(\frac{C}{\varepsilon}\right) + \mathcal{O}(\varepsilon^{b})\right],\text{ for }\varepsilon\rightarrow 0,\text{ where }0<b<1,
\end{equation*}
with
\begin{equation*}
T_{\text{s}} = T_{\text{s}}(I^*,a_{10},a_{01}) = \int_{0}^{I^*}\frac{-\sinh(\pi I/2)}{\pi a_{10}I\sin\psi_{\h}(I)}dI,
\end{equation*}
where $\psi_{\text{h}}(I)$ is the parameterization \eqref{def:psi} of the highway $\mathcal{L}^*(I,\psi_{\h}) = A_{00} + A_{01}$, and
\begin{equation*}
C = C(I^*,a_{10},a_{01}) = 16\left|a_{10}\right|\left(1 + \frac{1.465}{\sqrt{1- \mu^2A^2}}\right)
\end{equation*}
where $A = \max_{I\in\left[0,I^*\right]} \alpha(I)$, with $\alpha(I)$ given in \eqref{eq:alpha} and $\mu = a_{10}/a_{01}$. 
\end{theorem}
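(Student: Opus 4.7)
The plan is to estimate separately the three ingredients appearing in the decomposition \eqref{eq:diffusion_time}: the total number $N_{\s}$ of scattering-map iterates, the time $T_{\h}$ per homoclinic excursion, and the cumulative inner-map time $\lfloor N_{\s}/N_{\s\s}\rfloor T_{\text{i}}$, and then sum them up. First I would count $N_{\s}$. Since $\mathcal{S}(I,\theta)$ is, up to $\mathcal{O}(\varepsilon^{2})$, the $-\varepsilon$-time flow of the Hamiltonian $\mathcal{L}^{*}(I,\theta)$ given in \eqref{eq:red_poi_fun_theta}, and the highway is the invariant level set $\mathcal{L}^{*}(I,\theta)=A_{00}+A_{01}$, the orbit of $\mathcal{S}_{\M}$ starting at a point on $H_{\text{r}}$ follows the parameterization $(I,\psi_{\h}(I))$ with $\dot{I}=-A_{10}(I)\sin\psi_{\h}(I)$ by \eqref{eq:der_red_poi_fun_theta}. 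Integrating $dt=dI/\dot{I}$ from $0$ to $I^{*}$, doubling by the symmetry established in Lemma \ref{lem:geometrical_lemmas} to cover $[-I^{*},0]$ as well, and dividing by the $\varepsilon$-step size gives $N_{\s}=T_{\s}/\varepsilon+\mathcal{O}(1)$, where $T_{\s}$ is precisely the integral in the statement (the $\pi$ in its denominator, instead of $2\pi$ in $A_{10}(I)$, is exactly the symmetry factor).

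Next I would estimate $T_{\h}$. Each scattering iterate tracks a genuine homoclinic orbit that first leaves a neighborhood of $\widetilde{\Lambda}$ of size $\delta$ along $W^{\text{u}}$ and later returns to such a neighborhood along $W^{\text{s}}$. Since the unperturbed pendulum has hyperbolicity exponent $\lambda=1$ at the saddle $(p,q)=(0,0)$, using the explicit separatrix parameterization \eqref{eq:separatrices} one checks that each of these two excursions contributes a time $\log(C/\varepsilon)+\mathcal{O}(\varepsilon)$, producing $T_{\h}=2\log(C/\varepsilon)+\mathcal{O}(\varepsilon)$. The explicit constant $C$ is fixed by requiring that $\delta$ be large enough so that \eqref{eq:second_definition_SM_int} is a good approximation and small enough so that the linearization near the saddle is accurate; concretely, the coefficient $16|a_{10}|$ arises by matching with the linearized pendulum dynamics and the prefactor of $A_{10}(I)$, while the factor $1+1.465/\sqrt{1-\mu^{2}A^{2}}$ comes from the uniform lower bound on $\|\nabla\mathcal{L}^{*}(I,\psi_{\h}(I))\|$ along the highway, in the spirit of the estimate preceding Table \ref{tab:estimates_09}.

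For the third contribution I would use the accuracy analysis of Subsection \ref{sec:accuracy}. Choosing $N_{\s\s}=\varepsilon^{-c}$ with $0<c<1$, the propagated error after $N_{\s\s}$ consecutive applications of $\mathcal{S}_{\M}$ is $\mathcal{O}(\varepsilon^{2-c})$ by \eqref{eq:error_final}, so a short application of the integrable inner flow \eqref{eq.inner_equations} suffices to reset the trajectory to within $\varepsilon^{a}$ of the highway. Because the inner dynamics is integrable with frequencies $(I,1)$, the ergodization time $T_{\text{i}}$ can be bounded by a negative power of $\varepsilon$ (adjusting $I$ by $\mathcal{O}(\varepsilon^{a})$ to enforce a diophantine condition if necessary, as in the shadowing lemmas of \cite{fontich2000,fontich2003,Gidea2014}). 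Since $\lfloor N_{\s}/N_{\s\s}\rfloor\le T_{\s}\varepsilon^{c-1}$, the total inner time satisfies $\lfloor N_{\s}/N_{\s\s}\rfloor T_{\text{i}}\le T_{\s}\varepsilon^{c-1}T_{\text{i}}$, which for $c$ small enough is absorbed into a remainder of the form $(T_{\s}/\varepsilon)\,\mathcal{O}(\varepsilon^{b})$ for some $0<b<1$. Adding the three pieces then yields $T_{\text{d}}=N_{\s}T_{\h}+\lfloor N_{\s}/N_{\s\s}\rfloor T_{\text{i}}=(T_{\s}/\varepsilon)[2\log(C/\varepsilon)+\mathcal{O}(\varepsilon^{b})]$, as claimed.

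The main obstacle I expect is to extract the explicit constants $T_{\s}$ and especially $C$ rather than only the asymptotic rate: tracking $16|a_{10}|$ requires matching the coefficients of the linearized separatrix dynamics with the Melnikov prefactors, and the geometric factor $1+1.465/\sqrt{1-\mu^{2}A^{2}}$ demands a uniform-in-$I\in[-I^{*},I^{*}]$ lower bound on $\|\nabla\mathcal{L}^{*}\|$ along the highway, using $A=\max_{I\in[0,I^{*}]}\alpha(I)$. A secondary technical nuisance is to calibrate the three exponents $a$, $c$, and $b$ consistently so that the shadowing error, the Euler-type error from Subsection \ref{sec:accuracy}, and the diophantine-based ergodization bound all fit into a single $\mathcal{O}(\varepsilon^{b})$ remainder while preserving the leading $2\log(C/\varepsilon)$ term.
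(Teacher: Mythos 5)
Your strategy coincides with the paper's at every structural step: you invoke the same decomposition \eqref{eq:diffusion_time}, compute $N_{\s}=T_{\s}/\varepsilon+\mathcal{O}(1)$ by integrating $dI/\dot I$ along the Hamiltonian flow of $\mathcal{L}^*$ on the highway (the symmetry factor turns the $2\pi$ of $A_{10}$ into $\pi$), derive $T_{\h}=2\log(4\sqrt{2}/\delta)+\mathcal{O}(\delta^2)+\mathcal{O}(\varepsilon)$ from the time the separatrix spends between the two crossings of $\partial B_\delta(0,0)$, and bound $T_{\text{i}}$ by a Dirichlet-box ergodization estimate of order $\varepsilon^{-a}$. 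The final bookkeeping matches as well, though the phrase ``for $c$ small enough'' should read ``choosing $a<c$''; the inner contribution $\mathcal{O}(\varepsilon^{c-a-1})$ is subdominant only because $c>a$, with $b=c-a$.

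The one substantive deviation is your heuristic for $C$, which you already flag as the weak spot. A lower bound on $\|\nabla\mathcal{L}^*\|$ along the highway is what controls $\varepsilon^*$ (as in the estimates preceding Table \ref{tab:estimates_09}), not $\delta$, and it does not yield $16|a_{10}|(1+1.465/\sqrt{1-\mu^2A^2})$. In the paper, $C$ comes from a \emph{truncation} bound: one asks how small $\delta$ --- equivalently, how large $t_{\text{f}}=-t_{\text{i}}$ --- must be so that restricting the Melnikov-type integrals defining $\partial\mathfrak{L}^*/\partial I$ and $\partial\mathfrak{L}^*/\partial\psi$ to $[t_{\text{i}},t_{\text{f}}]$ perturbs them by at most $\varepsilon$. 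Bounding the tails of these integrals, using the exponential decay of $p_0^2(\sigma)$, the chain-rule factors coming from $\xi(I,\psi)$ whose denominators produce $\sqrt{1-\mu^2\alpha(I)^2\sin^2\psi}\geq\sqrt{1-\mu^2A^2}$, and the bound $|\alpha'(I)|<1.465$, gives an estimate of the form $Ce^{-t_{\text{f}}}$. Setting this equal to $\varepsilon$ pins $\delta=4\sqrt{2}\,\varepsilon/C\,(1+\mathcal{O}(\varepsilon^2))$, and substituting into the formula for $T_{\h}$ yields $T_{\h}=2\log(C/\varepsilon)+\mathcal{O}(\varepsilon)$. So $C$ encodes the convergence rate of the finite-time approximation to the scattering map along the separatrix, not a gradient bound; the computation you would actually need here is a tail estimate for the truncated Melnikov derivatives, not a lower bound on $\|\nabla\mathcal{L}^*\|$.
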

The proof of this Proposition is a consequence of the following four subsections.

\subsubsection{Number of iterates $N_{\text{s}}$ of the scattering map}

The scattering map $(I',\theta') = \mathcal{S}(I,\theta)$ given in \eqref{eq:second_definition_SM_int} can be rewritten as
\begin{eqnarray*}
\frac{I'-I}{\varepsilon} = \frac{\partial \mathcal{L}^*}{\partial \theta}(I,\theta) + \mathcal{O}(\varepsilon) &\quad&\frac{\theta'-\theta}{\varepsilon} = -\frac{\partial \mathcal{L}^*}{\partial I}(I,\theta) + \mathcal{O}(\varepsilon).
\end{eqnarray*}

Hence, disregarding the $\mathcal{O}(\varepsilon)$ terms, we define 
\begin{eqnarray}
\frac{d I}{d\upsilon} = \frac{\partial \mathcal{L}^*}{\partial \theta}(I,\theta) &\quad & \frac{d\theta}{d\upsilon} = -\frac{\partial \mathcal{L}^*}{\partial I}(I,\theta),\label{eq:diff_equation_t_scat_map}
\end{eqnarray}
where $\upsilon$ is a new parameter of time. Note that $\mathcal{L}^*(I,\theta)$ is a first integral of (\ref{eq:diff_equation_t_scat_map}) and that the highway has the equation $\mathcal{L}^*(I,\theta) = A_{00} +A_{01}$.
Recalling formula \eqref{eq:der_red_poi_fun_theta} for $\partial \mathcal{L}^*/\partial \theta (I,\theta)$, the equation for $I$ reads as 
\begin{equation*}
\frac{d I}{d\upsilon} = \frac{\partial\mathcal{L}^*}{\partial \theta}(I,\theta) = -A_{10}(I)\sin\psi,
\end{equation*}
where $\psi = \theta - I\tau^*(I,\theta)$ as given in (\ref{def:psi}).
We choose the highway $H_{\text{r}}$ for $a_{10}>0$ (or $H_{\l}$ for $a_{01}<0$) to ensure that $\partial \mathcal{L}^*/\partial \theta (I,\theta)>0$ (see Definition \ref{def:highways}). 
This implies that we can rewrite the equation above as 
$$\frac{d\upsilon}{dI} = \frac{-1}{A_{10}(I)\sin\psi_{\text{h}}}$$
so that
\begin{equation*}
T_{\s} := \upsilon  = \int_{I_{0}}^{I_{\text{f}}}\frac{-1}{A_{10}(I)\sin\psi_{\text{h}}(I)}dI
= \int_{I_{0}}^{I_{\text{f}}}\frac{-\sinh(\pi I/2)}{2\pi I a_{10}\sin\psi_{\text{h}}(I)}dI\label{eq:time_T_s}
\end{equation*}
is the time of diffusion in the interval $[I_{0},I_{f}]$ of values of $I$ following the flow
\eqref{eq:diff_equation_t_scat_map}.
\begin{remark} If we consider an interval of diffusion as in Theorem \ref{teo:main_theorem}, that is, $\left[-I^*,I^*\right]$, the time $T_{\s}$ is
$$T_{\text{s}} = \int_{0}^{I^*}\frac{-\sinh(\pi I/2)}{\pi I a_{10}\sin\psi_{\text{h}}(I)}dI.$$
\end{remark}

\begin{remark}
Observe that 
\begin{equation*}
T_{\s} \geq \frac{1}{2\pi a_{10}}\left(\text{Shi}\left(\frac{I_{f}\pi}{2}\right)-\text{Shi}\left(\frac{I_{0}\pi}{2}\right)\right),\label{eq:lower_bound}
\end{equation*}
where the function Shi$(x)$ is defined as 
$$\text{Shi}(x) := \int_{0}^{x}\frac{\sinh \sigma}{\sigma}d\sigma.$$
\end{remark}

The time $T_{\s}$ has been computed from the continuous dynamics (\ref{eq:diff_equation_t_scat_map}) . 
But the scattering map generates a discrete dynamics with a $\varepsilon$-step.
Then for us, the important information is the number of iterations of the scattering map \eqref{eq:second_definition_SM_int} from $I_0$ to $I_{\text{f}}$ which is given by
\begin{equation*}\label{eq:num_ite_sct_map}
N_\text{s}= \frac{T_{\text{s}}}{\varepsilon}(1 + \mathcal{O}(\varepsilon)).
\end{equation*}

\subsubsection{Time of the travel $T_{\text{h}}$ on the invariant manifold}\label{sec:time_invariant_manifold}

Let $\tilde{x}_-$ and $\tilde{x}_+$ be on $\tilde{\Lambda}$ such that $S(\tilde{x}_-)=\tilde{x}_+$.
We now estimate the time of the flow from a point $\delta$-close to $ \tilde{x}_-$ to a  point $\delta$-close to $\tilde{x}_+$. 

Recall that the unperturbed separatrices \eqref{eq:separatrices} are given by $(p_{0}(t),q_{0}(t)) = \left(2/\cosh t,4\arctan e^{ t}\right)$.
We have $(p_{\varepsilon}(\tau),q_{\varepsilon}(\tau))= \left(2/\cosh \tau,4\arctan e^{ \tau}\right) + \mathcal{O}(\varepsilon)$, where  $(p_{\varepsilon}(\tau),q_{\varepsilon}(\tau))\in\overline{\text{B}_{\delta}(0)}\cap W_{\varepsilon}^{s,u}(0)$.

Note that when $\tau \rightarrow \pm \infty$,
\begin{equation*}\label{eq:p_0_infinito}
p_{0}(\tau) = \frac{4}{e^{\left|\tau\right|}}\left(1 - e^{-2\left|\tau\right|}+ e^{-4\left|\tau\right|}+\dots\right) = \frac{4}{e^{\left|\tau\right|}}\left(1 + \mathcal{O}(e^{-2\left|\tau\right|})\right).
\end{equation*}
Besides, as $\dot{q}_{0}(\tau)=\partial H_{0}/\partial p = p_{0}(\tau)$,
we also have
\begin{equation*}
q_{0}(\tau) = \mp\frac{4}{e^{\left|\tau\right|}}\left(1 +\mathcal{O}(e^{-2\left|\tau\right|})\right)\mod 2\pi\quad \text{when  }  \tau \rightarrow \pm\infty.
\end{equation*}
We consider starting and ending points on $\partial \text{B}_{\delta}(0,0)$.
Then, denoting by $\tau_{\text{f}} = -\tau_{\text{i}}$ the initial and final points, we have
\begin{equation*}
q_{0}^2(\tau_\text{i}) + p_{0}^2(\tau_\text{i}) =q_{0}^2(\tau_\text{f}) + p_{0}^2(\tau_\text{f})  = \left[\frac{4}{e^{u}}\left(1 + \mathcal{O}(e^{-2u})\right)\right]^2 + \left[-\frac{4}{e^u}\left(1 + \mathcal{O}(e^{-2u})\right)\right]^2 = \delta^2,
\end{equation*}
where $u = \left|\tau_{\text{i}}\right|,\, \left|\tau_{\text{f}}\right|$.
Therefore,
\begin{equation}
\frac{4\sqrt{2}}{e^{u}}\left(1 + \mathcal{O}(e^{-2u})\right) = \delta.\label{eq:delta_e_s_first}
\end{equation}

Note that by the above equation $\delta = \mathcal{O}(e^{-u})$, thus $e^{-2u} = \mathcal{O}(\delta^2)$.
Hence, we can rewrite equation \eqref{eq:delta_e_s_first} as
\begin{equation}\label{eq:delta_as_a_function_of_e_tf}
e^{u} = \frac{4\sqrt{2}}{\delta}\left(1 + \mathcal{O}(\delta^2)\right). 
\end{equation}
So,
\begin{equation*}
u = \log\left[\frac{4\sqrt{2}}{\delta}\left(1 + \mathcal{O}(\delta^2)\right)\right] = \log\left(\frac{4\sqrt{2}}{\delta}\right) + \mathcal{O}(\delta^2).
\end{equation*}
Since $\Delta \tau = 2u$, we finally have
\begin{equation}\label{eq:time_h_as_a_funcrion_of_delta}
T_{\h} =  2\log\left(\frac{4\sqrt{2}}{\delta}\right) + \mathcal{O}(\delta^2) + \mathcal{O}(\varepsilon).
\end{equation} 

It is now necessary to estimate a value for $\delta$ and we want $\delta$ small enough such that this choice does not affect significantly the scattering map \eqref{eq:second_definition_SM_int}, that is, that the level curves of the reduced Poincar\'{e} remain at a distance of $\mathcal{O}(\varepsilon)$. 
From Proposition \ref{prop:melnpot} the Melnikov potential, using that $p_0^2/2+ \cos q_0 - 1 = 0$, is 
$$ \mathcal{L}(I,\varphi,s) = \frac{1}{2} \int_{-\infty}^{+\infty}p_0^2(\sigma)\left(a_{00} + a_{10}\cos(\varphi + I\sigma) + a_{01}\cos(s + \sigma)\right)d\sigma.$$

The reduced Poincar\'{e} function \eqref{eq:red_poi_func_1} $\mathcal{L}^*(I,\theta)$ is 
$$\mathcal{L}^*(I,\theta) = \frac{1}{2}\int_{-\infty}^{+\infty}p_0^{2}(\sigma)\left(a_{00} + a_{10}\cos(\varphi + I(\sigma - \tau^*(I,\varphi,s))) + a_{01}\cos(s - \tau^*(I,\varphi,s) + \sigma)\right)d\sigma.$$

Considering the diffusion along the highways, recall that $\psi$, given in (\ref{def:psi_original}), is well defined and, as in  \eqref{eq:red_poin_fun_psi_2}, we can write the reduced Poincar\'{e} function on the variables $(I,\psi)$ as
\begin{eqnarray*}
\mathfrak{L}^*(I,\psi) &=& \frac{1}{2}\int_{-\infty}^{+\infty}p_0^{2}(\sigma)\left(a_{00} + a_{10}\cos(\psi + I\sigma) + a_{01}\cos(\xi(I,\psi) + \sigma)\right)d\sigma \\
&=&A_{00} + A_{10}(I)\cos\varphi + A_{01}\cos\xi(I,\varphi).
\end{eqnarray*}

As we want to preserve the level curves of the reduced Poincar\'{e} function up to $\mathcal{O}(\varepsilon)$, we need $t_{\text{i}}$ and $t_{\text{f}}$ such that the integration above along all the real numbers does not change much when the interval of integration is $\left[t_{\text{i}},t_{\text{f}}\right]$, more precisely, given a $\varepsilon>0$

\begin{equation}\label{eq:inequality}
\left|\frac{\partial \mathfrak{L}^*}{\partial I}(I,\psi) - \left(\frac{\partial \mathfrak{L}^*}{\partial I}(I,\psi)\right)_{\delta}\right| <\varepsilon \quad\quad \text{ and }\quad\quad \left|\frac{\partial \mathfrak{L}^*}{\partial \psi}(I,\psi) - \left(\frac{\partial \mathfrak{L}^*}{\partial \psi}(I,\psi)\right)_{\delta}\right| <\varepsilon, 
\end{equation}
where $\left(\frac{\partial \mathfrak{L}^*}{\partial \gamma}(I,\psi)\right)_{\delta}$ is given, for  $\gamma \in \{\psi,I\}$ by
$$\frac{-1}{2}\int_{t_\text{i}}^{t_{\text{f}}}\frac{\partial}{\partial \gamma}\left(p_0^{2}(\sigma)\left(a_{00} + a_{10}\cos(\varphi + I(\sigma - \tau^*(I,\varphi,s))) + a_{01}\cos(s - \tau^*(I,\varphi,s) + \sigma)\right)\right)d\sigma.$$

Using that $\left|\alpha'(I)\right| <1.465$, one computes that
$$\left|\frac{\partial \mathfrak{L}^*}{\partial \gamma}(I,\psi) - \left(\frac{\partial \mathfrak{L}^*}{\partial \gamma}(I,\psi)\right)_{\delta}\right| < Ce^{-t_{\text{f}}} \quad \gamma\in\{\psi, I\},$$
where $C = 16\left(\left|a_{10}\right|+1.465\left|a_{01}\right|\left|\mu\right|/\sqrt{1-\mu^2A^2}\right)$, $A = \max_{I\in[0,I^*]}\alpha(I)$. 
By \eqref{eq:delta_as_a_function_of_e_tf} with $u = t_{\text{f}}$, this is equivalent to
$$\left|\frac{\partial \mathfrak{L}^*}{\partial \gamma}(I,\psi) - \left(\frac{\partial \mathfrak{L}^*}{\partial \gamma}(I,\psi)\right)_{\delta}\right| < \frac{C\delta(1 + \mathcal{O}(\delta^2))}{4\sqrt{2}}.
$$

To satisfy Eq.\eqref{eq:inequality} we have to take a $\delta$ such that the above right hand side is less or equal than $\varepsilon$.
For simplicity, we take $\delta$ satisfying the equality, that is,
$$
\delta = \frac{4\sqrt{2}\varepsilon}{C}(1 +\mathcal{O}(\varepsilon_{0}^{2})).
$$
Inserting this value of $\delta$ in \eqref{eq:time_h_as_a_funcrion_of_delta}, we can conclude that 
$$T_{\text{h}} = 2\log\left(\frac{16\left|a_{10}\right|\left(1+\frac{1.465}{\sqrt{1-\mu^2A^2}}\right)}{\varepsilon}\right) +\mathcal{O}(\varepsilon).
$$

\subsubsection{Time $T_{\text{i}}$ under the inner map \label{sec:time_inner}}

To build of the pseudo-orbit which shadows the real diffusion orbit, we need, after each $N_{\s\s}$-iterates of the scattering map ($N_{\s\s} = \left\lceil\varepsilon^{-c}\right\rceil$, see \eqref{eq:def_n_iterates}) , to apply the inner flow to return to the same level curve of $\mathcal{L}^*$ (or close enough).
The time spent by the inner flow is the time $T_{\text{i}}$, which we are going to estimate.

Recall that $\widetilde{\Lambda}_{\varepsilon} = \widetilde{\Lambda}$, where $\widetilde{\Lambda}$ is a NHIM of the unperturbed case (see Section \ref{sec:the_system}).
We will calculate the time for the flow of the unperturbed case because in our case it is a good approximation, that is, along \NH\, lines $(I,\varphi + It, s + t)$ (see Section \ref{sec:the_system}).

Given $\varepsilon>0$ small enough, our goal is to calculate $t>0$ such that 
\begin{equation}\label{eq:inner_time}
\left|(I,\varphi + It,s+t) - (I,\varphi,s)\right|<\varepsilon^a,
\end{equation}
that is, $\left|I(2\pi k) - 2\pi l\right| <\varepsilon^a$ for some integer $k$, $l$, or equivalently
\begin{equation}\label{eq:ergodization_time}
\left|I - \frac{l}{k}\right|<\frac{\varepsilon^{a}}{2\pi k}.
\end{equation}
We now recall the Dirichlet Box Principle:
\begin{proposition}{\textbf{(Dirichlet Box Principle)}}
Let $N$ be a positive integer and let $\alpha$ be any real number.
Then there exists positive integers $k\leq N$ and $l\leq \alpha N$ such that
$$\left|\alpha - \frac{l}{k}\right| \leq \frac{1}{k(N+1)}.$$ 
\end{proposition}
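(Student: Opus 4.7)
My plan is a direct application of the pigeonhole principle (the ``Dirichlet box principle'') to the fractional parts of integer multiples of $\alpha$. Concretely, I would consider the $N+2$ real numbers
\[
0,\ \{\alpha\},\ \{2\alpha\},\ \ldots,\ \{N\alpha\},\ 1,
\]
all lying in $[0,1]$ (here $\{\cdot\}$ denotes the fractional part). I would then partition $[0,1]$ into the $N+1$ consecutive subintervals of length $1/(N+1)$. Since there are $N+2$ points distributed among only $N+1$ boxes, at least two of the points must land in the same subinterval, and therefore differ by at most $1/(N+1)$.

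Next I would extract the required integers $k,l$ from the colliding pair by a short case analysis. If $\{i\alpha\}$ and $\{j\alpha\}$ with $0\le i<j\le N$ fall in the same subinterval, I set $k:=j-i$ and $l:=\lfloor j\alpha\rfloor-\lfloor i\alpha\rfloor$, so that $|k\alpha-l|=|\{j\alpha\}-\{i\alpha\}|\le 1/(N+1)$. If instead $\{j\alpha\}$ collides with the endpoint $0$, I take $k:=j$ and $l:=\lfloor j\alpha\rfloor$; if it collides with the endpoint $1$, I take $k:=j$ and $l:=\lceil j\alpha\rceil$. In each situation $|k\alpha-l|\le 1/(N+1)$ with $1\le k\le N$ and $l\in\mathbb{Z}$, and dividing through by $k$ yields the desired inequality $|\alpha-l/k|\le 1/(k(N+1))$. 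The auxiliary bound $l\le\alpha N$ (which tacitly assumes $\alpha\ge 0$, the case $\alpha<0$ reducing by the symmetry $\alpha\mapsto -\alpha$, $l\mapsto -l$) follows from $l/k\le\alpha+1/(k(N+1))$ combined with $k\le N$, up to the usual integer-part adjustment.

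The only subtlety, and what I would flag as the ``main obstacle'', is the choice of test points in the pigeonhole step. Using only the $N+1$ fractional parts $\{j\alpha\}$ for $j=0,\ldots,N$ would put $N+1$ points in $N+1$ boxes, which does not force a collision, and attempting to repair this by coarsening to $N$ boxes of length $1/N$ yields only the weaker constant $1/N$. Including the two boundary points $0$ and $1$ is precisely what forces the sharp denominator $N+1$ and, through the case analysis above, guarantees that the integer $k$ extracted from the collision is strictly positive rather than zero. Once this is set up correctly, the rest of the argument is elementary bookkeeping.
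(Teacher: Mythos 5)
Your pigeonhole argument is correct and is the standard proof of this classical result; note that the paper itself does not prove this proposition at all --- it merely recalls the Dirichlet Box Principle before using it to bound the ergodization time $T_{\text{i}}$ in subsection \ref{sec:time_inner} --- so there is no paper proof to compare against, and your write-up in effect supplies the missing argument. The case analysis with the two endpoints $0$ and $1$ added to the fractional parts $\{j\alpha\}$ does force a collision with $N+2$ points in $N+1$ boxes and correctly guarantees $k\geq 1$ together with the denominator $N+1$, so the main inequality $\left|\alpha-\frac{l}{k}\right|\leq\frac{1}{k(N+1)}$ is fully justified. One caveat: the side conditions as stated (that $l$ be a \emph{positive} integer with $l\leq\alpha N$) cannot be literally deduced --- for instance $\alpha=0.95$, $N=1$ forces $l=1>\alpha N$, and small or nonpositive $\alpha$ may force $l\leq 0$ --- so your hedge ``up to the usual integer-part adjustment'' is the honest reading; this imprecision is in the statement as quoted (and is harmless for its use in the paper, where only $k\leq N$ and the approximation inequality matter), not a gap in your proof.
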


Define $N :=\lceil 2\pi/\varepsilon^a-1\rceil $, the smaller natural number such that it is greater or equal than $2\pi/\varepsilon^a-1$.
Then from the Dirichlet Box Principle, there exist $k, l$ satisfying the condition (\ref{eq:ergodization_time}) such that
$k \leq N$ and $l\leq I N$.
Then $T_{\text{i}}= 2\pi k$ is the time required for \eqref{eq:inner_time}, called the ergodization time.
Note that for any $\varphi$, 
\begin{equation}\label{eq:upper_bound_inner_time}
T_{\text{i}} \leq 2\pi N = 2\pi\left\lceil\frac{2\pi}{\varepsilon^a}-1\right\rceil,
\end{equation}

So that $T_{\text{i}} = \mathcal{O}(\varepsilon^{-a})$.

\subsubsection{Dominant time and the order of diffusion time}

We finally put together the estimates of $N_{\s},\,T_{\text{h}}$ and $T_{\text{i}}$, jointly with $N_{\s\s} = \varepsilon^{-c}$ in the formula for the time of diffusion \eqref{eq:diffusion_time}.
If we look just at the order of the time of diffusion we have
\begin{equation*}
T_{\text{d}} = N_{\s}T_{\h}+ \left\lfloor N_{\s}/N_{\s\s}\right\rfloor T_{\text{i}}  =\mathcal{O}(\varepsilon^{-1}\log\varepsilon^{-1}) + \mathcal{O}(\varepsilon^{c-a-1}).
\end{equation*}

Choosing $0<a<c$ the term containing the time $T_{\text{i}}$ under the inner map is negligible compared with the term containing the time of travel $T_{\h}$ along the homoclinic orbit: $\varepsilon^{c-a-1}\ll (1/\varepsilon)\log 1/\varepsilon$.
We finally obtain the desired estimate for the time of diffusion
\begin{equation*}
T_{\text{d}} = \frac{T_{\text{s}}}{\varepsilon}\left[2 \log \frac{C}{\varepsilon} + \mathcal{O}(\varepsilon^b)\right],
\end{equation*}
where $b = c-a$.
Since $c<1$, $0<b<1$.
Notice that by the choice of the parameter $0<a<c\ll 1$, the accuracy of the scattering map given in \eqref{eq:error_final} is $\mathcal{O}(\varepsilon^a)$.
\bibliography{references}

\end{document}